\theoremstyle{plain}
\newtheorem{thm}{Theorem}[section]
\newtheorem{cor}[thm]{Corollary}
\newtheorem{prop}[thm]{Proposition}
\newtheorem{lemma}[thm]{Lemma}
\theoremstyle{definition}
\newtheorem{definition}[thm]{Definition}
\newtheorem{remark}[thm]{Remark}
\newtheorem{example}[thm]{Example}
\newtheorem{alg}[thm]{Algorithm}
\newtheorem {rep@theorem}{\rep@title}
\newcommand{\newreptheorem}[2]{%
\newenvironment{rep#1}[1]{%
 \def\rep@title{#2 \ref{##1}}%
 \begin{rep@theorem}}%
 {\end{rep@theorem}}}
 \newcommand{\Ax}{\ensuremath{{\mathcal{A}}}}
 \newcommand{\Rx}{\ensuremath{{\mathcal{R}}}}
 \newcommand{\Cx}{\ensuremath{{\mathcal{C}}}}
 \newcommand{\Lx}{\ensuremath{{\mathcal{L}}}}
 \newcommand{\Mx}{\ensuremath{{\mathcal{M}}}}
 \newcommand{\Px}{\ensuremath{{\mathcal{P}}}}
 \newcommand{\Wx}{\ensuremath{{\mathcal{W}}}}
\newcommand{\Q}{\ensuremath{\mathbb{Q}}}
\newcommand{\Z}{\ensuremath{\mathbb{Z}}}
\newcommand{\C}{\ensuremath{\mathbb{C}}}
\renewcommand{\H}{\ensuremath{\mathbb{H}}}
\newcommand{\SL}{\text{SL}}
\newcommand{\PSL}{\text{PSL}}
\newcommand{\tr}{\text{tr}}
\newcounter{nootje}
\newcommand{\mat}[4]{\left(\begin{array}{cc}#1 & #2 \\ #3 & #4 \end{array}\right)}
\numberwithin{equation}{theorem}
\begin{document}
\title{Geometric structures and $PSL_2(\C)$ representations of knot groups from knot diagrams}
\author{Kathleen L. Petersen and Anastasiia Tsvietkova}

\begin{abstract} We describe a new method of producing equations for the canonical component of representation variety of a knot group into $\rm{PSL}_2(\C)$. Unlike known methods, this one does not involve any polyhedral decomposition or triangulation of the knot complement, and uses only a knot diagram satisfying a few mild restrictions. This gives a simple algorithm that can often be performed by hand, and in many cases, for an infinite family of knots at once. The algorithm yields an explicit description for the hyperbolic structures (complete or incomplete) that correspond to geometric representations of a hyperbolic knot. As an illustration, we give the formulas for the equations for the variety of closed alternating braids $(\sigma_1(\sigma_2)^{-1})^n$ that depend only on $n$.

\end{abstract}

\maketitle

\section{Introduction}

Let $K$ be a knot in $S^3$ admitting a diagram $D$ satisfying a few mild restrictions, and let $M=S^3-N(K)$ be the complement of a tubular neighborhood of the knot. We present a new algorithm that produces equations for all geometric representations of the fundamental group $\pi_1(M)\rightarrow\text{PSL}_2(\C)$ based solely on $D$. We choose a preferred conjugate for a meridian, and therefore our algorithm gives equations for a reduced representation variety for the knot complement.
We also give presentations for Wirtinger generators. Hence, this is a new method to efficiently compute the canonical component of $\text{PSL}_2(\C)$-character variety of $M$. Due to the well understood lifting of representations from $\PSL_2(\C)$ to $\SL_2(\C)$ our method also computes  $\SL_2(\C)$ representations. This is inspired by the work of Thistlethwaite and Tsvietkova \cite{MR3190595, thesis}, who developed a similar algorithm for determining parabolic representations (conjecturally, all of them) of links once a suitable diagram is given.

 The study of representations of knot groups into $\mathrm{PSL}_2(\C)$  has a long history: we refer the reader to \cite{MR0222880},  \cite{MR0300267}, \cite {MR0413078}, and \cite{MR745421}.
The character variety, which is the set of all representations up to trace equivalence, has many broad applications to 3-manifold topology as, for example, in \cite{MR1670053}, \cite{MR1288467}.
Notably,  Culler and Shalen \cite{MR683804} showed how to associate essential surfaces in a 3-manifold to ideal points of its character variety.
Character varieties were also fundamental tools in the proofs of the cyclic \cite{MR881270} and finite \cite{MR1909249} surgery theorems.

The set of all representations of $\pi_1(M)$ into $\rm{PSL}_2(\C)$ is a complex algebraic set.  Any component containing a discrete and faithful representation is called a canonical component. We use the term geometric representation to denote any representation whose developing map has a particularly nice geometric format, as outlined by Thurston in \cite{thurston}.  

Verifying that our algorithm gives a representation is straightforward. It is more difficult to prove that all geometric representations (and there are infinitely many of them) can be obtained this way, which we establish in our main theorem.

Representation and character varieties for specific knots are often computed in an ad hoc manner, usually for knot complements whose fundamental group admits a particularly nice presentation. They have been computed for a few infinite families of such knots, for example, in \cite{MR2827003, MR3073918, MR3450771}. 

There are also more general  approaches, albeit previously these were either limited to parabolic representations, or relied on the use of software in practice. In particular, for parabolic representations, in addition to Thistlethwaite and Tsvietkova's work \cite{MR3190595}, another approach, with some similarities, was recently developed in \cite{MR3876301}. Beyond the parabolic case, the known algorithm starts with a suitable triangulation (see \cite{MR3325748}). But in general, there is no algorithm that provides such a triangulation for a 3-manifold, and only a procedure exists as a part of the program SnapPea \cite{SnapPea}. The procedure is based on heuristically retriangulating the 3-manifold until a desired triangulation is obtained. For orientable irreducible 3-manifolds with one cusp, where the 3-manifold is small (i.e. every embedded closed incompressible surface is boundary parallel), an algorithm for obtaining a triangulation that will yield a generalized variety was given by Segerman \cite{Segerman}. Note that many knot complements in 3-sphere are not small.

Our algorithm uses only a taut diagram of the knot, and does not use any polyhedral decomposition of the knot complement. Conjecturally, every link has a taut diagram. Additionally, many diagrams are known to be taut: e.g. reduced alternating diagrams of hyperbolic alternating links \cite{MR3190595}, and some other infinite families (this is discussed in detail in subsection \ref{suitable}). Note that it is not known, for example, how to find a suitable triangulation for a hyperbolic alternating knot algorithmically. Therefore, we give the first algorithm for computing the equations for character variety of these wide classes of knots. If the input diagram is not taut, our algorithm may fail to give representations, but representations it produces will be valid.

Our approach was implemented in software \cite{Software}. However in many cases, it allows to produce formulas for varieties of an infinite family of knots just by looking at their diagrams, by hand, as we show in the last section. 

The way we start is similar to that of Thistlethwaite and Tsvietkova \cite{MR3190595}: by taking certain arcs in a knot complement, and considering the respective isometries in the covering space, $\mathbb{H}^3$. But since we do not limit this to the parabolic case anymore, the geometric picture for preimages of the arcs in $\mathbb{H}^3$ is not taking place in the familiar ``horoball" structure anymore. Instead, the preimages of the arcs may correspond to  parabolic, loxodromic, or elliptic transformations in $\mathbb{H}^3$ between or along the shapes that we call bananas. This, in its turn, requires more care when we work with the respective elements of $\mathrm{PSL}_2(\C)$: we prove a number of lemmas about such arcs and elements, allowing us to work with well-defined and ``straightened" geodesics in incomplete hyperbolic structures.

Our argument uses the fact that all meridianal curves are homotopic, and so doesn't immediately extend to links.  A generalization to links is possible, but has some technical challenges, which are discussed in Remark \ref{links}.

\subsection{Notation} For the benefit of the reader, we  collect some notation that is frequently used.  These terms are defined as needed, later in the text. 

We will use the upper half space model of $\H^3$, with the Riemannian sphere $\C\cup\{\infty\}$ serving as the boundary of the hyperbolic space. We reserve the following symbols: $K$ is a knot in $S^3$, $N(K)$ is a tubular neighborhood of $K$,  $M=S^3-N(K)$ is its complement in 3-sphere,  $T=\partial M$, $D$ a diagram for $K$ (i.e. a projection of $K$ to a 2-sphere), $B$ a lift of cusp cross-section (of $N(K)$) to $\H^3$ (later we will call this a `banana'), $H$ a cover of $M$ in $\H^3$.  The symbols $\mu$ and $\lambda$ will denote a standard meridian and longitude, based at a point $b$ on $T$.  We use $\gamma$ to denote a crossing arc, and $\beta$ to denote a peripheral arc, and $\alpha$ to denote a path along the top of the knot.  We also use $\tilde{a}$ for a preimage of an arc $a$ in $H$.   We use $\rho$ to denote a representation from $\pi_1(M)$ to $\text{PSL}_2(\C)$. We will use $I$ to denote the coset containing the $2\times 2$ identity matrix in $\PSL_2(\C)$.  Except when signs are needed for clarity, we use matrices for elements of $\PSL_2(\C)$.

\subsection{Outline} If $K$ is a hyperbolic knot, then there are infinitely many geometric representations of $\pi_1(K)$ in any neighborhood of a discrete and faithful representation.  These representations correspond to covers of $S^3-N(K)$ by particularly nice subsets of $\H^3$, or, in other words, to geometric structures on the knot complement.  Given a hyperbolic structure on $S^3-N(K)$, using the developing map, a loop in the fundamental group of $S^3-K$ lifts to (infinitely many) paths in $\H^3$.  A geodesic in $M$ lifts to geodesic arcs in $\H^3$. Once one chooses a preferred lift, such an arc corresponds to a unique isometry. This isometry fixes the geodesic as a set in $\H^3$ and sends a lift of the base point to the next lift of this point along the geodesic.  The isometry can be identified with a unique element of $\PSL_2(\C)$.   We will use this correspondence between the  arcs and paths in $S^3-N(K)$ (or, alternatively, in a knot diagram) and the elements of $\text{PSL}_2(\C)$ throughout the paper.

 Section \ref{Background} recalls some basics about $\SL_2(\C)$ and $\PSL_2(\C)$-character varieties of knot complements in 3-sphere, and the respective geometric structures.
In Section~\ref{GeometricSetup} we first discuss knot diagrams suitable for our algorithm. We then proceed to define certain types of arcs in $S^3-N(K)$, visible in a knot diagram, and establish a correspondence between the arcs and elements of $\PSL_2(\C)$. The arcs are meridian, peripheral and crossing arcs. The arcs can be concatenated into paths and loops that correspond to Wirtinger generators. We show that geometric representations can be extended to such arcs, paths and loops in Section \ref{Paths}. In Section \ref{Normalizing},  we prove that the elements that correspond to arcs and paths in non-degenerate geometries are conjugate to a particularly nice (``normalized") elements of $\PSL_2(\C)$. We also write down relations for these elements of $\PSL_2(\C)$ from a knot diagram.  Finally in Section~\ref{section:wirtinger}, we prove that not only does our set-up determine geometric representations, but all the representations of the knot group into $\PSL_2(\C)$ that lie on the canonical component of the variety are determined in this way (Theorem \ref{ThmMain1} and Corollary \ref{corollary:geometric}). Section \ref{section:algorithm} is devoted to the algorithm for writing out the equations for the geometric component of the character variety from the knot diagram, and to some simple practical shortcuts. We prove that algorithm works in Theorem \ref{ThmAlg}. Section \ref{section:smallregions} is devoted to less trivial shortcuts: simplification of the algorithm for bigons and 3-sided regions of a knot diagram. Section \ref{CuspShape} is a short comment on computing cusp shape for different geometric structures. Section \ref{figure-eight} is an example that illustrates the algorithm: we take figure-eight knot, and give a simple computation of the equations for the canonical component of $\PSL_2(\C)$ and $\SL_2(\C)$- character varieties, equations for parabolic representations, for traceless representations, for cusp shapes for different geometric structures, and for the A-polynomial.  In Section \ref{3-braids}, we show how algorithm can be applied to an infinite family of knots at once: for this, we choose the knots that are closed 3-braids with braid word $(\sigma_1 (\sigma_2)^{-1})^n$, and obtain formulas for the variety of such a knot that depend only on $n$.  Magnus \cite{MR387430} computed representations for this family.

\section*{Acknowlegements}

The authors thank Marc Culler for helpful conversations.  The first author was partially supported by an AMS-Simons Research Enhancement Grant for PUI Faculty. The second author was partially supported by NSF CAREER grant DMS-2142487, by NSF research grants DMS-2005496, DMS-1664425, DMS-1406588, by Institute of Advanced Study under DMS-1926686 grant, and by Okinawa Institute of Science and Technology.  

\section{Background: Geometric Representations and Characters}\label{Background}

In this section, we review some of the well-known facts about representation varieties and geometric structures of a 3-manifold.

Let $\Gamma$ be a finitely presented group.  The $\SL_2(\C)$ {\em representation variety} $R(\Gamma)$ is the set of all representations from $\Gamma$ to $\SL_2(\C)$,
\[ R(\Gamma)=\{\rho: \Gamma \rightarrow \SL_2(\C)\}.\]
As conjugate representations correspond to the same geometric structure, the $\SL_2(\C)$ character variety is often useful.  It is
\[ X(\Gamma) = \{ \chi_{\rho} : \  \rho\in R(\Gamma) \} \]
where the character function $\chi_{\rho}:\Gamma\rightarrow \C$ is defined by $\chi_{\rho}(\gamma) = \text{trace}(\rho(\gamma)).$
A representation is called {\em irreducible} if it is not conjugate to an upper triangular representation. If $\rho$ is an irreducible representation, then $\chi_{\rho}=\chi_{\rho'}$ exactly when $\rho$ and $\rho'$ are conjugate.

Both $R(\Gamma)$ and $X(\Gamma)$ are affine, complex algebraic sets defined over $\Q$. Different presentations for a group yield isomorphic sets.  Therefore, for a 3-manifold $M$ we often write $X(M)$, for example,  to mean $X(\pi_1(M))$ up to isomorphism.   When $M=S^3-N(K)$, we outline a construction of the $\PSL_2(\C)$ character variety below.  We denote this set by $Y(M)$.

If $M$ is hyperbolic, any component of $X(M)$ or $Y(M)$   that contains the character of a discrete and faithful representation is called a {\em canonical component.}  Thurston \cite{thurston} showed that for the fundamental group of a hyperbolic manifold, the complex dimension of a  canonical component equals the number of cusps of the manifold.  Therefore, if $M$ is a hyperbolic knot complement, then  the canonical components $X_0(M)$ and  $Y_0(M)$ are complex curves. If $M$ is not hyperbolic, our method may determine an algebraic set of representations of $M$.

\subsection{Lifting representations from $\PSL_2(\C)$ to $\SL_2(\C)$}\label{section:liftingreps}
Let $M=S^3-N(K)$ be a hyperbolic knot complement.  The details of the following discussion, that we briefly reproduce here, can be found in  \cite[\S 2.1]{MR1739217},\cite{MR1695208},   \cite[\S 3]{MR1670053}, \cite{MR1248117} and \cite{thurston}.

Let $\rho$ and $\rho'$ be two discrete faithful representations of $\pi_1(M)$ to $\PSL_2(\C)$. These representations must be conjugate in $O(3,1)$ by Mostow-Prasad rigidity  \cite{MR0236383, MR0385005},
but they may or may not be conjugate in $\PSL_2(\C)$.  Specifically,   $\rho'$ is conjugate in $\PSL_2(\C)$ to either $\rho$ or to $\overline{\rho}$.  The representation $\overline{\rho}$ is defined as follows.  For $\gamma\in \pi_1(M)$,  $\overline{\rho}(\gamma)$  is  entry-wise  the complex conjugate of $\rho(\gamma)$.  This difference corresponds to a choice of orientation of $M$ and the fact that $\PSL_2(\C) \cong \text{Isom}^{+}(\H^3)$. If two representations are conjugate in $\PSL_2(\C)$, they correspond to the same hyperbolic structure on $M$.

Culler proved that if a discrete subgroup $\Gamma$ of $\PSL_2(\C)$ has no 2-torsion, then it lifts to $\SL_2(\C)$ \cite{MR825087}.  That is, that there is a natural homomorphism from $\Gamma$ to $\SL_2(\C)$ which composed with the natural projection from $\SL_2(\C)$ to $\PSL_2(\C)$ is the identity on $\Gamma$.  Therefore, if we view an element $\gamma \in \Gamma$ as an equivalence class of matrices (since it is in $\PSL_2(\C)$), say  $(\pm) X$,  such a lift will take $(\pm) X$ to either $X$ or $-X$.  For representations of knot complements we can be more specific.

Let $\epsilon \in H^1(M,\Z/2\Z) \cong \Z/2 \Z$.  We can identify $\epsilon$ with a homomorphism from $\pi_1(M)$ to $\{I, -I\}$, where $I$ is the $2\times 2$ identity matrix.  Consider a representation $\rho:\pi_1(M)\rightarrow \PSL_2(\C)$ which lifts to $\tilde{\rho}:\pi_1(M)\rightarrow \SL_2(\C)$.  This gives another representation defined by  $\epsilon(\gamma) \tilde{\rho}(\gamma)$ for all $\gamma \in \pi_1(M)$.  In fact, the $\PSL_2(\C)$ character variety of $M$ is isomorphic to the $\SL_2(\C)$ character variety modulo $\Z/2\Z$ under this action defined by $\epsilon$.  One can see the homomorphism $\epsilon$ clearly for the Wirtinger presentation, $\Gamma\cong \pi_1(M)$ of the knot.  This  homomorphism defines a {\em parity} for any $\gamma\in   \Gamma$. That is, $\gamma$ can be written as a product of meridians in Wirtinger presentation, and the number of meridians modulo 2 is the parity.  This decomposes $\Gamma$ (and therefore $\pi_1(M)$ in general) into two cosets $\Gamma_e$ and $\Gamma_o$, the even and odd coset.  The kernel of $\epsilon$ is $\Gamma_e$.

Let $\tilde{\rho}_1$ and $\tilde{\rho}_2$ be two (different) lifts of $\rho:\pi_1(M) \rightarrow \PSL_2(\C)$ to $\SL_2(\C)$.  As stated above, for $\gamma \in \pi_1(M)$ we have $\tilde{\rho}_2(\gamma) = \epsilon(\gamma) \tilde{\rho}_1(\gamma)$.
For $\gamma\in \Gamma_e$, this is $\tilde{\rho}_2(\gamma) =   \tilde{\rho}_1(\gamma)$.
For $\gamma\in \Gamma_o$, this is $\tilde{\rho}_2(\gamma) =   -\tilde{\rho}_1(\gamma)$.
 The action of the homomorphism $\epsilon$ from $\Gamma$ to $\{I, -I\}$ induces an action on $X(\Gamma)$ by $\chi_{\rho}(\gamma) \mapsto \chi_{\epsilon(\gamma) \rho(\gamma)}$.
 The $\PSL_2(\C)$ representation variety for $M=S^3-N(K)$ is isomorphic to $X(\Gamma)/\epsilon$ under this action.

\subsection{Geometric Representations and Invariant surfaces in $\mathbb{H}^3$ }\label{section:geometricreps}

\begin{definition}
A {\em banana} is either a surface which consists of the points at a fixed hyperbolic distance from a geodesic in $\H^3$, or a horosphere. In the former case, we call the geodesic {\em the axis of the banana}, and the ideal points of the geodesic will be called {\em the ideal points (or endpoints) of the banana.} See Fig.\ref{Bananas} for a picture of such bananas in the upper half model of $\H^3$ that we use throughout. For a horosphere, we call its point of tangency with the boundary of $\mathbb{H}^3$ {\em the center}.
\end{definition}

\begin{figure}
\centering
\includegraphics[scale=0.7]{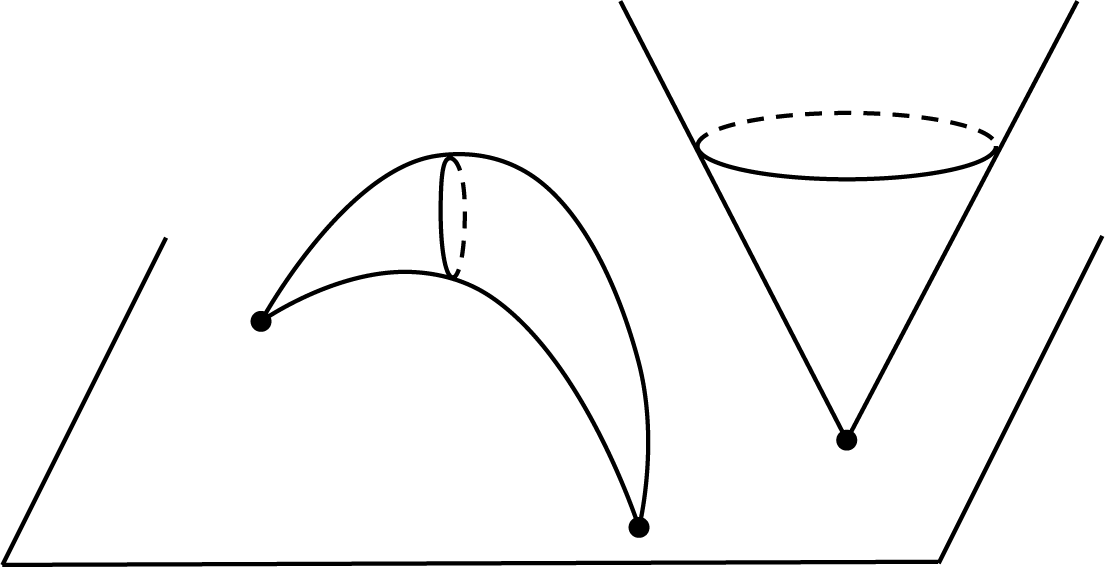}
\caption{Two bananas that are not horospheres.}
\label{Bananas}
\end{figure}

Thurston observed that if we consider the holonomy \[ \rho_0: \pi_1(M) \rightarrow \PSL_2(\C) \] of the complete hyperbolic structure, then  homomorphisms $\rho:\pi_1(M) \rightarrow  \PSL_2(\C)$  sufficiently close to $\rho_0$ are the holonomy of a (usually incomplete) hyperbolic structure on $M$ ( \cite[Chapter 4]{thurston}; also see p. 149 of \cite{Handbook}). These holonomies correspond to the geometric structures on $M$ where the boundary torus of a knot lifts to infinitely many (closed) bananas.  We will call such representations {\em geometric}. Note that $T$ will lift to an infinite collection of distinct horospheres if the representation is parabolic on $\pi_1(T)$. The holonomy representation of a complete hyperbolic structure is a discrete and faithful representation, and is parabolic. Two geometric representations which are conjugate in $\PSL_2(\C)$ correspond to the same hyperbolic structure on $M$.  Geometric representations which are conjugate after complex conjugation correspond to a change in orientation.

The set $H$ is defined in reference to a specific neighborhood of the knot, i.e. the specific cusp cross-section. For a  different choice of neighborhood, the resulting bananas are parallel copies of the original neighborhood,  enlarged or reduced in size. For horospheres in $H$, this change of the neighborhood is often referred to as ``blowing up", and the choice for which the horospheres touch but do not have overlapping interiors is called a ``maximal cusp". However, here we assume that the closure of the neighborhood of the knot does not intersect itself in more than one point. It follows that given a fixed neighborhood, two distinct bananas in $H$ must have distinct ideal endpoints. When the cusp torus intersects itself in only one point, our method still applies. This occurs, for example, for the complete structure of the figure-8 knot, when the meridian is scaled to be 1 \cite{MR1876890}, and this geometric set-up was used in \cite{MR3190595}.

From now on, we will fix orientation of the knot. We specify a complex affine structure on each banana by adopting a convention that the meridional translation is through unit distance in the positive
real direction. We will later see that in some cases, the orientation of meridian is also important. We therefore adopt the usual “right-hand screw” convention relating the directions
of meridian and longitude, when such a convention is needed (e.g. in direct computations later). For the complete parabolic structure, this will force the translation corresponding to a longitude on the
torus to have positive imaginary part.

\subsection{Zariski Closure.}\label{section:Zariski} If $M$ is a hyperbolic knot complement, then by work of Thurston, all but finitely many Dehn fillings of $M$ are hyperbolic \cite{thurston}. Moreover, in any neighborhood of a discrete and faithful representation of $\pi_1(M)$ to $\PSL_2(\C)$ there are infinitely many representations corresponding to these Dehn fillings.  These are all geometric representations. The fundamental group of any such Dehn filling is torsion-free and isomorphic to a discrete subgroup of $\PSL_2(\C)$, since it is hyperbolic.  Therefore, the representations into $\PSL_2(\C)$ corresponding to these hyperbolic structures on $M$ all lift in this natural way to $\SL_2(\C)$.   Since there is an infinite number of these representations in any neighborhood of a discrete and faithful representation of $\pi(M) \rightarrow \text{(P)SL}_2(\C)$, they form a Zariski dense subset of any canonical component (which is necessarily a curve).  Therefore to determine equations for the character variety of any canonical component, or to determine equations for the representations up to conjugation of a canonical component, it suffices to determine the equations governing  geometric representations.  {\it We will now only consider geometric representations.}

\section{Geometric Set-up: Peripheral and Crossing Arcs}\label{GeometricSetup}

In this section, we discuss the correspondence between certain arcs in the knot complement, $S^3-K$, and their preimages in $\mathbb{H}^3$ under the developing map. This can be seen as a generalization of Section 3 of \cite{MR3190595}. Later we will use this to establish the correspondence between the paths in the fundamental group of the knot complement, and the elements of $\PSL_2(\C)$.

\begin{definition}  Let $D$ be a diagram for a knot $K$.  An {\em overpass} in $D$ is a maximally connected portion of $K$, if one looks at $D$ from above.  An {\em underpass} is a maximally connected portion of $K$ if one looks at $D$ from below.  An {\em edge} is a connected portion of $D$ from one crossing to the next. If we thicken the knot, a {\em peripheral arc} is an arc lying on the boundary torus $T$ along one thickened edge.   A {\em crossing arc} is a cusp-to-cusp arc from an underpass to an overpass at a crossing.  Up to a homotopy, its preimage in $\mathbb{H}^3$ often lies on an ideal hyperbolic geodesic (see Theorem~\ref{Thm:crossingarc}). We refer to both the  cusp-to-cusp arc and the respective ideal geodesic in $M$ as a crossing arc denoting both by the same letter (a common abuse of notation). A {\em path} consists of peripheral and crossing arcs, connected into a simple (possible, closed) curve. Figure \ref{Path} depicts a thickened knot, and a (colored) path consisting of a red crossing arc and a green peripheral arc. A {\em region} $S$ of  $D$  is a disk in the plane whose boundary consists of edges and crossing arcs of $D$ as in Fig.\ref{Region} ($D$ is depicted in black, crossing arcs in grey), or alternatively, of peripheral and crossing arcs.
\end{definition}

\begin{figure}[h]
\centering
\begin{subfigure}[b]{0.53 \textwidth}
\centering
\includegraphics[scale=0.41]{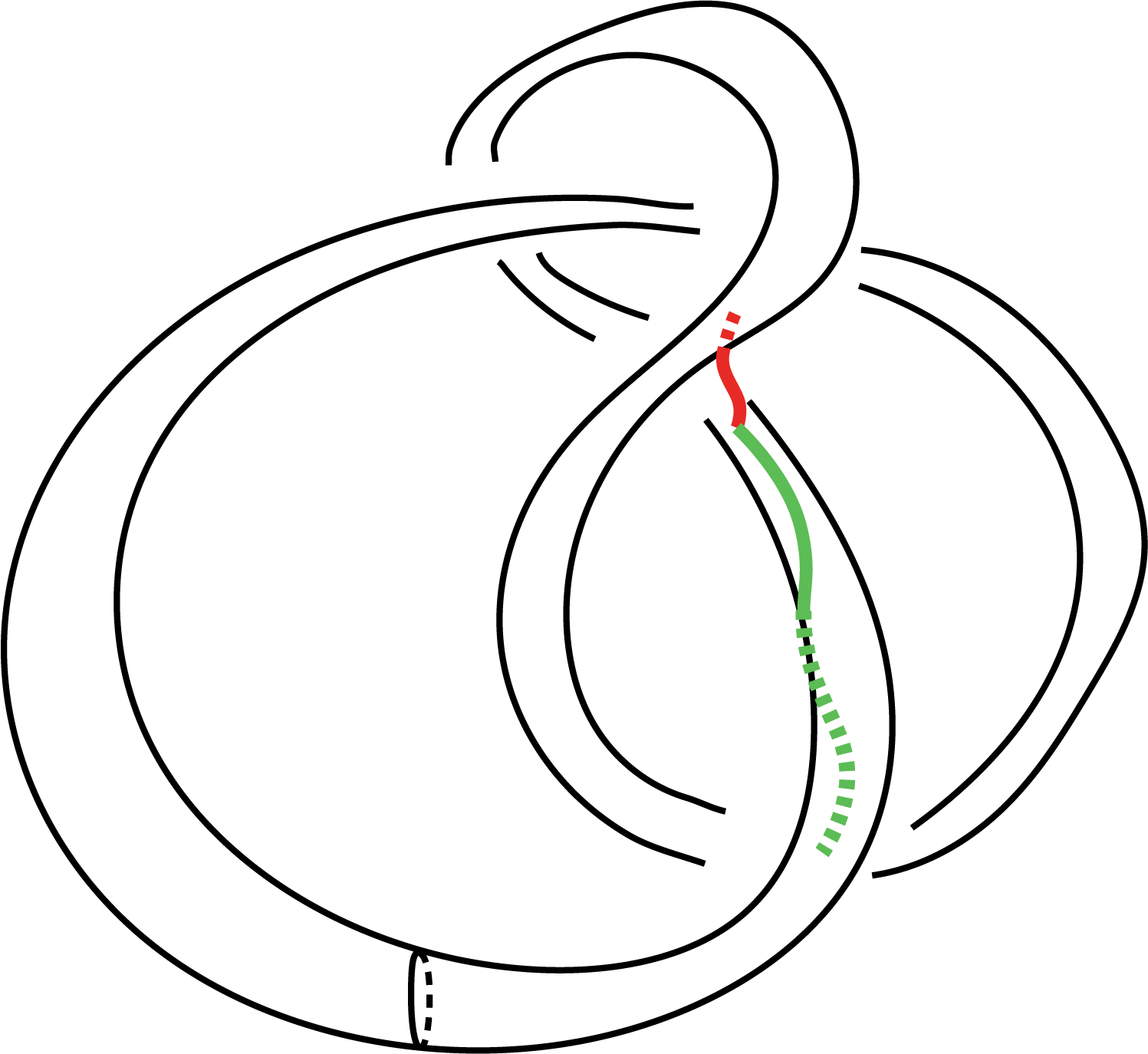}
\subcaption{}
\label{Path}
\end{subfigure}
\begin{subfigure}[b]{0.43 \textwidth}
\centering
\includegraphics[scale=0.68]{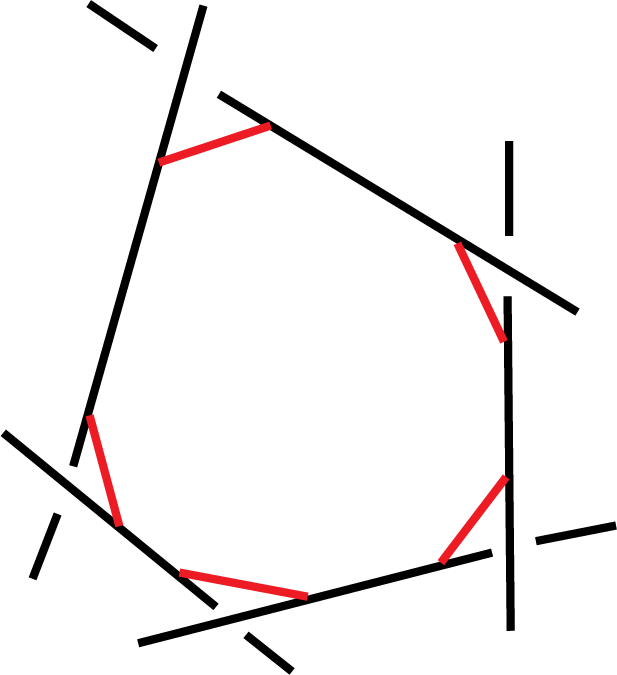}
\subcaption{}
\label{Region}
\end{subfigure}
\caption{Right: A path on a thickened knot, consisting of a crossing arc (in red) and a peripheral arc (in green). Left: A region of a knot diagram with five edges (in black). Five crossing arcs (in red) are also depicted.}
\end{figure}

\begin{remark}\label{isometries} For peripheral arcs  and meridians, we will use the following correspondence between the arcs and elements of $\text{PSL}_2(\C)$. Every horosphere locally resembles a Euclidean plane, and can be endowed with an affine structure. A translation on a horosphere corresponds to a parabolic isometry of $\mathbb{H}^3$.  For a non-parabolic representation, $T$ lifts to a collection of distinct bananas that are not horospheres. Such a banana is isometric to a Euclidean cone in $\H^3$ with ideal endpoints at $0$ and $\infty$, and therefore can be endowed with an affine structure as well. A translation along an arc in that structure may correspond to a hyperbolic, loxodromic or elliptic isometry of $\mathbb{H}^3$. For crossing arcs, the correspondence between them and elements of  $\text{PSL}_2(\C)$ is as follows. Ideal points of two bananas can be connected by a hyperbolic geodesic.  We will show that there is an infinite number of representations on a canonical component such that in $H$, some of these geodesics are preimages of crossing geodesics (and therefore preimages of crossing arcs, up to homotopy).  For two bananas and the hyperbolic geodesic connecting their ideal points, there is a unique  isometry that exchanges the bananas and keeps the geodesic fixed set-wise. Such an isometry is elliptic of order two. All these isometries of $\mathbb{H}^3$ leave the bananas invariant as hypersurfaces of $\mathbb{H}^3$. Further, we will show that these elements of $\text{PSL}_2(\C)$ are well-defined in Lemmas \ref{lemma:meridian}, \ref{lemma:extendtoperipheral}, and \ref{LemmaCrossinggeodesics}. 

\end{remark}

\subsection{Suitable knot diagrams.}\label{suitable}

Here and further denote a preimage of a path $\cdot$ in $M$  to a path in $H$ by $\tilde{\cdot}$. Our algorithm is based on lifting perhipheral and crossing arcs from the knot complement to $\H^3$, and assigning respective elements of $\text{(P)SL}_2(\C)$ to them. For this, we need to make sure that preimages of arcs in $\mathbb{H}^3$ are well-defined.

Recall that a checkerboard surface for a knot is a spanning surface which may or may not be orientable. In particular, a black (white) checkerboard surface is a union of black (respectively, white) disks obtained by coloring the regions of a knot diagram in a checkerboard fashion. The disks are connected through crossings by twisted bands.

We will consider topological accidental parabolics. For an embedded surface $S$ in $M$, a {\em topological accidental parabolic} is a free homotopy class of a closed curve that is not boundary parallel on $S$, but can be
homotoped to the boundary of $M$. This property is independent of the geometric structure of the 3-manifold. We will also consider incompressibility of embedded surfaces in topological sense, i.e. a surface is incompressible if it is neither a 2-sphere, nor contains any compressing disks.

The following definition was suggested by Thistlethwaite and Tsvietkova.

\begin{definition}
 A diagram of a hyperbolic link is {\em taut} if each associated checkerboard
surface is incompressible and boundary incompressible in the link complement, and
 does not contain any simple closed curve representing a topological accidental parabolic.
\end{definition}

We will show in Theorem~\ref{Thm:crossingarc} (2) that if $D$ is taut, then for infinitely many geometric representations, the lift of a crossing arc in $H$ is homotopic to a unique geodesic. While it is crucial for our algorithm, it is perhaps of independent interest as well, and there are related previous results. 

For hyperbolic alternating links, it was proven in Prop. 1.2 of \cite{MR3190595} that a reduced alternating diagram of a hyperbolic alternating link is taut as a consequence of \cite{MR721450} and \cite{Quasi}, and hence crossing arcs are not topological accidental parabolics. As a corollary, each crossing arcs is homotopic to a unique geodesic for the discrete and faithful representation, hence the method of Thislethwaite and Tsvietkova for computing the complete hyperbolic structure always works for a reduced alternating diagram. Here we establish this for other representations as well. 

In addition to hyperbolic alternating links and their reduced alternating diagrams, some other diagrams were found to be taut, and hence a priori suitable for the method. Two recent preprints show that for hyperbolic fully augmented links in 3-sphere and 3-torus, one can choose a set of geodesic crossing arcs in their fully augmented diagrams due to the existence of nice geometric decompositions of their complements \cite{Rocky, TorusTT}. State surfaces for hyperbolic adequate links were proved to be quasi-fuchsian in \cite{Quasi}, and hence one can choose cusp-to-cusp arcs with the necessary properties in an adequate link diagram.

Empirically, we have not yet seen a hyperbolic link that does not admit a taut diagram. We hence do not know of any knots to which our method would not be applicable. This can be compared with previously existing methods for computing varieties that use a triangulation of a 3-manifold. While it is not known how to algorithmically construct a suitable triangulation a priori, empirically, after some modifications, a suitable triangulation is always found by SnapPea kernel \cite{SnapPea}.

Note that when a surface is orientable, then the lack of accidental parabolics is known to imply that the surface is incompressible and boundary incompressible (a short topological proof was given in  \cite{Nathan}), and hence the definition of a taut diagram can be simplified. But checkerboard surfaces can be non-orientable. 
{\it From now on, we assume that we are working with a taut diagram.}

\subsection{Representation of a meridian.}\label{Meridian}  Recall that $\mu$ denotes a meridian of the knot. If the representation is parabolic, meaning that $\rho(\mu)$ is a parabolic element, then the corresponding isometry is a translation on a horosphere.  If $\rho(\mu)$ is hyperbolic/loxodromic or elliptic, then the corresponding isometry is a translation or a rotation along a banana with two distinct ideal points. An example of two such translations is given in Figure \ref{figure:wirtinger}. The lemma below shows that we can conjugate so that in all of these cases the respective matrix has a specific upper triangular form. Geometrically this means that there is a preimage of the meridian on a specific banana as remarked below. 

 \begin{figure}[h]
\centering
\includegraphics[scale=0.75]{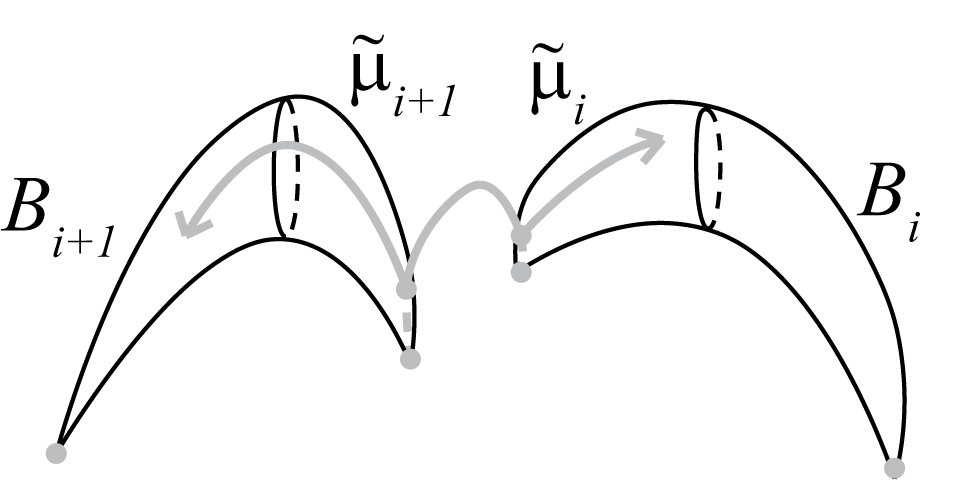}
\caption{The arcs $\mu_i, \mu_{i+1}$ on bananas $B_i, B_{i+1}$ are preimages of meridians.} 
\label{figure:wirtinger}
\end{figure}

\begin{definition}\label{definition:meridian}
Define a {\em meridian matrix}   $\Mx= (\pm) \mat{m}{1}{0}{m^{-1}} \in \PSL_2(\C)$, and let  $i_0=i_0(m)= -1/(m-m^{-1})$.  Let $I$ denote  $(\pm)$ the $2\times 2$ identity matrix.
  \end{definition}

 \begin{remark}\label{Mbanana}
 With this definition, the point $i_0$ is a fixed point of $\Mx$.  If $\Mx$ is a parabolic element, then $i_0=\infty$ is the only fixed point and is the ideal point of the horosphere at infinity.  Otherwise, if $\Mx$ is  not parabolic, then an invariant banana associated to $\Mx$ has fixed points $\infty$ and $i_0$. The point $i_0$ satisfies $(\text{tr}^2(\Mx)-4) i_0^2 = 1$.
 \end{remark}

\begin{lemma}\label{lemma:meridian}
Up to conjugation, we can take $\rho(\mu)=\Mx$.  Further, we can specify that  $|m|\geq 1$, and if  $|m|=1$ then $\arg(m)\leq \pi$.  This uniquely determines $\Mx$.
\end{lemma}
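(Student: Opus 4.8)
The plan is to reduce the statement to two elementary facts about $\PSL_2(\C)$: first, that any non-identity element is conjugate to one of the normalized upper-triangular forms $\Mx$; second, that the residual freedom in that conjugation is exactly what lets us impose the normalization $|m|\ge 1$ (and, in the boundary case $|m|=1$, $\arg(m)\le\pi$). Throughout one works with the meridian $\mu$ and the fact that for a geometric representation $\rho(\mu)$ is never the identity (the holonomy is faithful near the complete structure, and more generally nontrivial on the peripheral subgroup in the geometric range).

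First I would recall the classification of elements of $\PSL_2(\C)$ by their fixed points on $\partial\H^3=\C\cup\{\infty\}$. A non-identity element $\rho(\mu)$ is either parabolic, with a single fixed point, or loxodromic/elliptic, with two distinct fixed points. In either case conjugate by an element of $\PSL_2(\C)$ carrying the fixed point (respectively one of the two fixed points) to $\infty$; then $\rho(\mu)$ becomes upper triangular, of the form $(\pm)\mat{m}{b}{0}{m^{-1}}$. In the parabolic case $m=\pm1$ and $b\ne0$; conjugating by $\mat{\sqrt{b}}{0}{0}{1/\sqrt{b}}$ (a diagonal matrix, hence preserving $\infty$) rescales $b$ to $1$, giving $\mat{1}{1}{0}{1}=\Mx$ with $m=1$. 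In the loxodromic/elliptic case, sending the \emph{other} fixed point to $0$ as well makes the matrix diagonal, $(\pm)\mat{m}{0}{0}{m^{-1}}$; then conjugating by an appropriate parabolic $\mat{1}{t}{0}{1}$ fixing $\infty$ moves the second fixed point to the prescribed value $i_0=i_0(m)=-1/(m-m^{-1})$, turning the matrix into $\mat{m}{1}{0}{m^{-1}}=\Mx$. (One checks directly that $\mat{1}{t}{0}{1}\mat{m}{0}{0}{m^{-1}}\mat{1}{-t}{0}{1}=\mat{m}{t(m^{-1}-m)}{0}{m^{-1}}$, so the choice $t=-1/(m-m^{-1})$ produces the $1$ in the upper-right entry.) This establishes that $\rho(\mu)$ is conjugate to some $\Mx$.

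Next I would pin down the normalization. The value $m$ is determined only up to the symmetries of the form $\Mx$: since $\Mx$ depends on $m$ through $\mathrm{tr}^2=m^2+2+m^{-2}$ and the sign ambiguity in $\PSL_2(\C)$, replacing $m$ by $m^{-1}$ or by $-m$ gives conjugate normalized matrices (the $m\leftrightarrow m^{-1}$ swap corresponds to interchanging the two ideal points of the banana; the $m\leftrightarrow -m$ is the $\PSL$ sign). The four values $\{m,m^{-1},-m,-m^{-1}\}$ thus all yield representatives of the same conjugacy class with the prescribed normal form. Exactly one of these four lies in the region $\{|m|>1\}\cup\{|m|=1,\ 0<\arg(m)\le\pi\}$ — for $|m|\ne1$ the condition $|m|>1$ selects one of $\{m,m^{-1}\}$ and the sign $-m\leftrightarrow m$ then... wait, $|-m|=|m|$, so among $\{m,m^{-1},-m,-m^{-1}\}$ two have modulus $>1$, namely $m^{\pm1}$ or $-m^{\pm1}$ whichever is the large pair; these two are negatives of each other, and one must check the argument convention resolves this — actually for $|m|\neq 1$ one needs a secondary convention on $\arg$ as well, which I would state carefully, and on the circle $|m|=1$ the condition $\arg(m)\le\pi$ selects one of $\{m,m^{-1}\}=\{m,\bar m\}$ while the remaining sign is fixed by declaring $m$ (not $-m$) once $\arg$ is in $(0,\pi]$. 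I would write this case analysis out explicitly so that "uniquely determines" is justified, and note the boundary/degenerate subtleties ($m=\pm1$ parabolic, $m=\pm i$ where $\arg$ hits the endpoints).

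The main obstacle, and the step deserving the most care, is the uniqueness claim: showing that the normalization conditions cut the four-element symmetry orbit of admissible $m$-values down to a single element without overcounting or undercounting on the boundary loci $|m|=1$ and the real/imaginary axes. The existence half (conjugating into normal form) is routine linear algebra; the subtlety is entirely in checking that the stated inequalities are a fundamental domain for the action of the group generated by $m\mapsto m^{-1}$ and $m\mapsto -m$ on $\C^\times$, together with handling the parabolic case $m=1$ separately (where there is no freedom at all once $b$ is scaled to $1$). I would also remark geometrically, echoing Remark~\ref{Mbanana}, that this normal form amounts to choosing the invariant banana of $\mu$ to have ideal endpoints $\infty$ and $i_0$, so that a preimage $\tilde\mu$ lies on the standard banana $B$ with axis the geodesic from $0$... rather from $i_0$ to $\infty$ — which is precisely the geometric content that later sections exploit.
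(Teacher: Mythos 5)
Your existence argument is fine and is essentially the ``elementary calculation'' the paper leaves implicit: classify $\rho(\mu)$ by its fixed points on $\partial\H^3$, move one fixed point to $\infty$, and normalize the off-diagonal entry to $1$ by a diagonal conjugation (parabolic case) or a parabolic conjugation fixing $\infty$ (non-parabolic case), which places the second fixed point at $i_0=-1/(m-m^{-1})$ exactly as in Remark \ref{Mbanana}. (Small slip: to rescale $\mat{1}{b}{0}{1}$ to $\mat{1}{1}{0}{1}$ you conjugate by $\mat{1/\sqrt{b}}{0}{0}{\sqrt{b}}$, not its inverse.) However, there are two genuine gaps. First, your justification that $\rho(\mu)\neq I$ does not work as stated: the geometric representations near $\rho_0$ are mostly Dehn-filling holonomies, which are \emph{not} faithful, so ``the holonomy is faithful near the complete structure'' cannot be the reason. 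The paper's argument is that $\mu$ normally generates $\pi_1(M)$ (Wirtinger presentation), so $\rho(\mu)=I$ would force $\rho(\pi_1(M))=\{I\}$, contradicting the fact that for a geometric $\rho$ the boundary torus lifts to infinitely many distinct bananas.

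Second, and more seriously, the uniqueness half --- which is the actual content of the lemma --- is left unfinished. You correctly identify the orbit $\{m,m^{-1},-m,-m^{-1}\}$, observe that $|m|\geq 1$ does not separate $m$ from $-m$, and then stop at ``I would write this case analysis out explicitly.'' The paper closes this by invoking the trace: since trace is a conjugation invariant, the $(1,1)$-entry is determined up to $m\leftrightarrow m^{-1}$ only (a two-element orbit), and then $|m|\geq 1$, with $\arg(m)\leq\pi$ as the tie-breaker when $|m|=1$, selects exactly one of $m,m^{-1}$; an explicit conjugation interchanges them while keeping the $(1,2)$-entry equal to $1$. The residual $m\mapsto -m$ you worry about is the choice of sign of the trace of the representative, i.e.\ the $\PSL_2(\C)$-versus-$\SL_2(\C)$ issue, which the paper's conventions absorb by working with signed representatives throughout (cf.\ Remark \ref{remark:liftingtoSL}). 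You are right that, read strictly in $\PSL_2(\C)$ where only $\mathrm{tr}^2$ is defined, one must either fix such a sign convention or note that the normalized forms with parameters $m$ and $-m$ are conjugate; but a proof must commit to one of these resolutions rather than defer it, since ``this uniquely determines $\Mx$'' is precisely the claim being proved.
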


\begin{proof}

 First, we show that for a geometric representation, $\rho(\mu)\neq  I$.   Since  $\pi_1(M)$ is normally generated by $\mu$,  if   $\rho(\mu)= I$ then $\rho(\pi_1(M)) = \{ I\}$.  But as $\rho$ is geometric, $\partial M$ lifts to bananas in $\H^3$. This cannot be the case if $\rho(\pi_1(M))= I$, since one must be able to take one fundamental region to another using the isometries corresponding to the elements of $\rho$. An elementary calculation shows that any $X\neq  I$ in $\PSL_2(\C)$ can be conjugated to the form $\Mx$.

Since trace is a conjugation invariant, and $\rho(\mu)$ is conjugate to $\Mx$,  the $(1,1)$-entry of $\Mx$ is determined up to perhaps exchanging $m$ with $m^{-1}$.
If $|m|<1$, or $|m|=1$ and $\arg(m)>\pi$, an elementary calculation shows that one can further conjugate $\Mx$ so that $m$ and $m^{-1}$ are interchanged and the $(1,2)$-entry is still 1. Either $|m^{-1}|\geq 1$ or $|m^{-1}|=1$ and $\arg(m)\leq\pi$ respectively, so the lemma holds for the new matrix.
\end{proof}

This choice of conjugation makes $\rho(\mu)$ upper triangular with the specified $m$. This corresponds  to choosing a particular geometric arrangement for $H$.  Specifically, choose a base point $b$ on the cusp torus $T$, and let $\mu$ be a meridian based at $b$. Then conjugating so that $\rho(\mu)=\Mx$  corresponds to choosing a preferred lift of $b$ to lie on a banana   associated to $\Mx$, as in Remark \ref{Mbanana}.

\subsection{Representation of peripheral arcs}\label{PeripheralArcs}

Any representation $\rho$ is naturally defined for all elements of $\pi_1(M)$, that is for loops.  Often a representation can be extended, so that it is well-defined for certain paths as well.  Let $\rho$ be a  geometric representation, and $\alpha \in \pi_1(M)$ be a loop with base point $b$.  Using the developing map, $\alpha$ has a preimage $\tilde{\alpha}$  from $\tilde{b}$ to $\tilde{b}'$, two preimages of $b$ (where possibly $\tilde{b}=\tilde{b}'$).  The element $\rho(\alpha)$ then corresponds to the isometry of $H$ which sends $\tilde{b}$ to $\tilde{b}'$.  Now suppose $\alpha'$ is a path rather than a loop in $M$ from $b_1$ to $b_2$. We say that $\rho$ extends to $\alpha'$, if for a fixed lift $\tilde{\alpha'}$ of $\alpha'$ from $\tilde{b}_1$ to $\tilde{b}_2$, there is a unique isometry of $H$ taking $\tilde{b}_1$ to $\tilde{b}_2$. We denote by $\rho(\alpha')$ the associated element of $\text{PSL}_2(\C)$.

A \underline{simple meridian} $\mu_i$ is a loop freely homotopic to $\mu$, whose orientation agrees with $\mu$, based at a base point $b_i$.  We can view simple meridians as a special case of peripheral arcs.

\begin{lemma}\label{lemma:extendtoperipheral}
Let $\beta$ be a peripheral arc (possibly a simple meridian) in $M$.  If a $\PSL_2(\C)$ representation $\rho$ is not parabolic, $\rho$ uniquely extends to any preimage $\tilde{\beta}$ of $\beta$.  In the case when $\rho$ is parabolic, it uniquely extends given a specified meridianal direction.
\end{lemma}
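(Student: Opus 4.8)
The goal is to show that a geometric $\PSL_2(\C)$ representation $\rho$ extends, uniquely, to any preimage $\tilde\beta$ of a peripheral arc $\beta$ (a simple meridian being a special case). Fix a lift $\tilde\beta$ running from $\tilde b_1$ to $\tilde b_2$, two lifts of the endpoints of $\beta$ on the cusp torus $T$. By the developing-map discussion preceding the lemma, $\tilde b_1$ lies on some banana $B_1$ in the lift $H$ of $T$, and $\tilde b_2$ lies on a banana $B_2$ (possibly $B_1=B_2$). What must be produced is an element of $\PSL_2(\C)$ taking $\tilde b_1$ to $\tilde b_2$, and the claim is that such an element is unique once we also ask that it respect the peripheral structure, i.e.\ that it carry $B_1$ to $B_2$ compatibly with the complex affine structure on the bananas fixed by our meridional normalization.

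First I would handle the parabolic case, since it is the cleanest and is already essentially the setting of \cite{MR3190595}. If $\rho$ is parabolic on $\pi_1(T)$, then $H$ is a union of horospheres; $\tilde b_1$ sits on a horosphere $B_1$ with center $c_1 \in \C\cup\{\infty\}$ and $\tilde b_2$ on $B_2$ with center $c_2$. Up to conjugation we may put $c_1 = \infty$, so $B_1$ is a horizontal horosphere with its Euclidean affine structure; the normalization of Lemma~\ref{lemma:meridian} pins down the scale (meridional translation is by $+1$). A peripheral arc is a straight segment in this affine structure, so $\tilde\beta$ determines a vector $v\in\C$. The element must be a parabolic (if $c_1=c_2=\infty$, a translation by $v$) or, if $c_2\neq\infty$, the unique isometry taking the horosphere at $\infty$ to the horosphere at $c_2$ and matching up the two marked affine structures along the arc; concretely this is the composite of a translation and a suitable element sending $\infty\mapsto c_2$, and once we demand that the induced map of affine structures send $\tilde b_1\mapsto \tilde b_2$ and respect orientation, the remaining one-parameter ambiguity (rotation fixing the pair $\{c_1,c_2\}$, i.e.\ the stabilizer of the horosphere) is killed. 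The phrase ``given a specified meridianal direction'' is exactly the data that removes this last rotational freedom: along a simple meridian one must say which way around the cusp one is translating. I would make this precise by noting that the stabilizer in $\PSL_2(\C)$ of a horosphere with center $\infty$ is the group of Euclidean isometries of $\C$, and specifying the meridian direction fixes the rotational part.

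The non-parabolic case is the real content, and I expect it to be the main obstacle. Here the bananas are genuine (constant-distance tubes about geodesics), not horospheres. Using Lemma~\ref{lemma:meridian} and Remark~\ref{Mbanana}, conjugate so that $B_1$ is the banana with ideal endpoints $0$ and $\infty$, carrying the ``Euclidean cone'' affine structure described in Remark~\ref{isometries} — again normalized by the meridional-translation convention. The stabilizer of $B_1$ in $\PSL_2(\C)$ is now the group of diagonal matrices $\mathrm{diag}(\lambda,\lambda^{-1})$, acting on the cone by a combined scaling-and-rotation (the ``screw motions'' along the axis) — a one-complex-parameter group, which restricted to the affine structure on $B_1$ acts simply transitively on $B_1$ minus nothing, i.e.\ freely and transitively on the banana. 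A peripheral arc $\tilde\beta$ is, up to homotopy rel endpoints, a geodesic-in-the-affine-structure segment on the banana it starts on, so it is determined by $\tilde b_1$ together with a displacement; the isometry we seek must carry $\tilde b_1$ to $\tilde b_2$ while carrying the marked affine structure of $B_1$ to that of $B_2$. Existence: compose an element of the stabilizer of $B_1$ moving $\tilde b_1$ to the ``correct'' point of $B_1$ with an isometry $g$ taking $B_1$ to $B_2$ (such $g$ exists since both are bananas with the right invariants, the relevant invariant being the translation length / rotation angle of $\rho(\mu)$, which is the same on all bananas because all meridians are conjugate). Uniqueness: if $g_1,g_2$ both work then $g_2^{-1}g_1$ stabilizes $B_1$ and fixes $\tilde b_1$ and acts trivially on the affine structure, but the stabilizer acts freely on the banana in the affine structure, so $g_2^{-1}g_1 = I$. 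The subtle point — and where care is needed — is that in the non-parabolic case \emph{no} extra ``meridianal direction'' datum is required: the orientation and scale carried by the meridional normalization already rigidify the affine structure on every banana, because the stabilizer acts simply transitively rather than with a leftover rotation; this is why the lemma states the parabolic case separately. I would also need to invoke the homotopy-straightening of peripheral arcs onto geodesic segments of the affine cone structure, which is where the earlier lemmas about ``straightened geodesics in incomplete structures'' (referenced in the introduction and proved in Section~\ref{Normalizing}) do the work; if those are not yet available at this point in the paper, I would instead argue directly that a peripheral arc lifts to an arc on a single banana and use that its endpoints plus the affine structure determine the displacement, appealing to the fact that $B_1, B_2$ are embedded and disjoint (from the ``closure of the neighborhood meets itself in at most one point'' hypothesis) so the lift cannot jump between bananas.
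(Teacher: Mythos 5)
Your proof is correct and follows essentially the same route as the paper: the isometry is pinned down by requiring it to preserve the banana containing the lift (with its ideal points fixed) and to send $\tilde b_1$ to $\tilde b_2$, using that the pointwise stabilizer of the two ideal points acts simply transitively on the banana, while in the parabolic case the horosphere stabilizer retains a rotational freedom that is killed by specifying the meridianal direction. The only superfluous elements are your allowance for $B_1\neq B_2$ --- a peripheral arc lies on $T$, so its connected lift stays on a single banana and $B_1=B_2$ always --- and the worry about straightening the arc, which is unnecessary since only the endpoints of the lift and the banana's affine structure enter the argument.
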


\begin{proof}  With the developing map, a preferred  longitude $\lambda$ lifts to infinitely many paths in $H$, since $H$ covers $M$. Since the longitude is a loop based at $b$ in $M$, the choice of a preimage $\tilde{b}$ of $b$ determines a unique isometry of $H$.
The point $\tilde{b}$ lies on a banana in $H$, and this isometry fixes the banana as a set, fixing its ideal points. 

Let $\beta$ be an oriented peripheral arc in $M$, beginning at point $b_1$ and terminating at point $b_2$.  Then any chosen preimage $\tilde{b}_1$ of $b_1$ lies on a unique banana, and the corresponding preimage $\tilde{\beta}$ of $\beta$ terminates at a point $\tilde{b}_2$ which covers $b_2$.  The choice of the preimage $\tilde{b}_1$ determines $\tilde{\beta}$ uniquely. If the respective banana has distinct ideal points, then the specification that they are fixed, and $\tilde{b}_1$ is sent to a fixed lift of $b_2$ specifies a unique isometry. (This is the case when the isometry $\rho(\lambda)$ is hyperbolic/loxodromic, or elliptic.) If  $\rho(\lambda)$ is parabolic, then a unique isometry is determined if we specify a fixed direction along the horosphere, or if we specify the image of another point. This can be achieved by specifying the direction for the preimage of the meridian.
\end{proof}

\subsection{Representation of crossing arcs}\label{crossing_arcs_reps}

Now, we  determine what  a preimage of a crossing arc looks like in $H$  for a geometric representation and  specify the corresponding conjugacy classes of elements in $\PSL_2(\C)$. 
We often write $H(\rho)$ instead of $H$, to show that $H(\rho)$ is a cover corresponding to the representation $\rho$. 

 Suppose we have an arc $\tilde{\gamma}$ with ideal endpoints that are also ideal endpoints of bananas $B_1, B_2$ (with possibly $B_1=B_2$). We say that $\tilde{\gamma}$ {\em weaves through a banana} if for any homotopy $f(\tilde{\gamma}, t)$ between $\tilde{\gamma}$ and a geodesic, there is $t \in [0,1]$ such that $f(\tilde{\gamma}, t)$ intersects the geodesic axis of $B$, where $B\neq B_1$ and $B\neq B_2$. See Figure \ref{Weave}. Note that the property of weaving through a banana for an arc is independent from the size of the neighborhood of the knot.

\begin{figure}[h]
\centering
\includegraphics[scale=0.74]{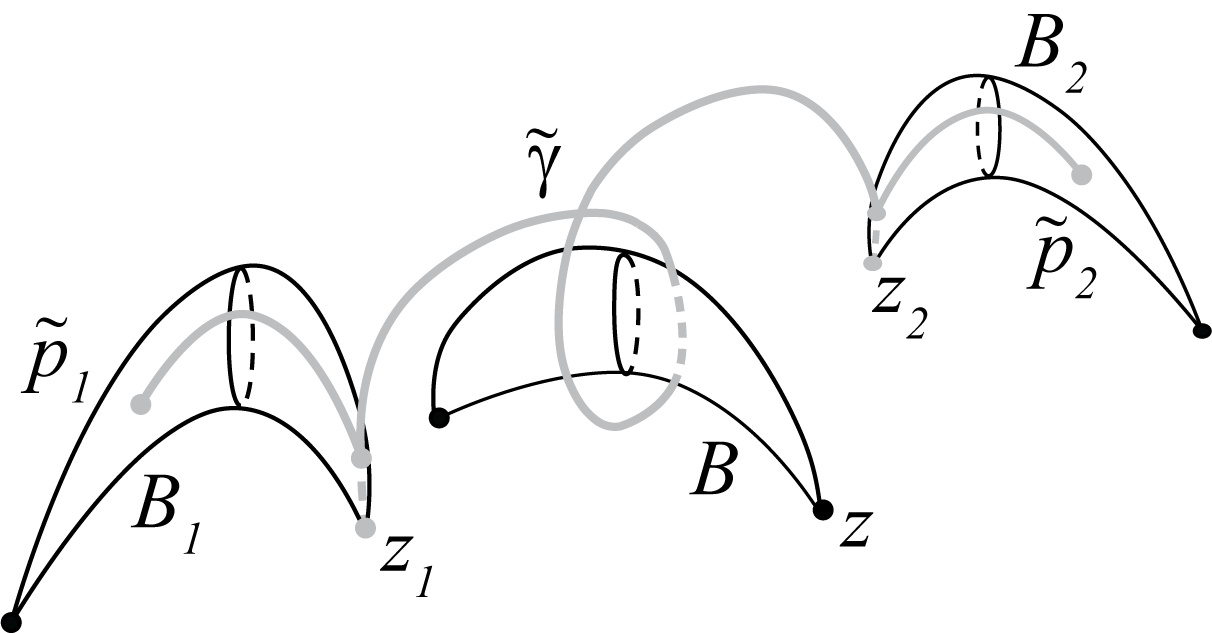}
\caption{The arc $\tilde{\gamma}$ from $\tilde{z}_1$ to $\tilde{z}_2$ weaves through banana $B$.}
\label{Weave}
\end{figure}

\begin{prop}\label{Thm:crossingarc}
Let $\gamma$ be a crossing arc in $M=S^3-K$, $\tilde{\gamma}$ a preimage of $\gamma$ in $H(\rho)$, and   $\rho_0$ be a discrete faithful representation of $M$.  Then the following holds.

\begin{enumerate}

\item\label{cross1} Any $\PSL_2(\C)$ representation  can be uniquely extended to $\tilde{\gamma}$.

\item\label{cross2} There are infinitely many geometric representations $\rho$ in any neighborhood of $\rho_0$ such that $\tilde{\gamma}$ is homotopic in $H(\rho)$ to a unique geodesic connecting two distinct bananas.

\item\label{cross3} Moreover, if the above homotopy is also an isotopy for $\rho_0$, i.e. in the complete hyperbolic structure, the same holds for those infinitely many geometric representations.

\end{enumerate}
\end{prop}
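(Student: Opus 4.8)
The plan is to handle the three parts of Proposition~\ref{Thm:crossingarc} with increasing amounts of work, using the taut hypothesis on $D$ precisely where the topological control is needed. For part~\eqref{cross1}, I would argue exactly as in the proof of Lemma~\ref{lemma:extendtoperipheral}: fix a preimage $\tilde z_1$ of one endpoint of $\gamma$; it lies on a unique banana $B_1$ of $H(\rho)$. The crossing arc $\gamma$, being a cusp-to-cusp arc, lifts to a unique $\tilde\gamma$ starting at $\tilde z_1$, terminating at a point $\tilde z_2$ on another banana $B_2$. Since $B_1,B_2$ have distinct ideal endpoints (by the standing assumption on the neighborhood of $K$, recorded in Section~\ref{section:geometricreps}), there is a unique isometry of $\H^3$ exchanging the two ordered pairs of ideal points and sending $\tilde z_1$ to the appropriate lift — the order-two elliptic element described in Remark~\ref{isometries}. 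This gives a well-defined $\rho(\gamma)\in\PSL_2(\C)$, and well-definedness up to the allowed choices follows because any two lifts differ by a deck transformation, i.e.\ by conjugation by an element of $\rho(\pi_1(M))$.

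For part~\eqref{cross2}, the strategy is: first prove the statement at the discrete faithful representation $\rho_0$ up to homotopy, then propagate it to a Zariski-dense family by the deformation/Dehn-filling argument of Section~\ref{section:Zariski}. At $\rho_0$, the key point is that $\tilde\gamma$ cannot be homotoped into a single banana (equivalently, $\gamma$ is not a topological accidental parabolic of the relevant checkerboard surface), which is exactly what tautness of $D$ guarantees — this is the content invoked from \cite{MR3190595, MR721450, Quasi}. Given that, the endpoints of $\tilde\gamma$ are ideal points of two \emph{distinct} bananas $B_1\ne B_2$, which have a unique common perpendicular geodesic $g$; since $\H^3$ is $\mathrm{CAT}(0)$, the straight-line homotopy carries $\tilde\gamma$ to $g$ rel endpoints and $g$ is the unique geodesic in its homotopy class. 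Now invoke Section~\ref{section:Zariski}: there are infinitely many geometric representations $\rho_j\to\rho_0$ arising from hyperbolic Dehn fillings, and they are Zariski dense on the canonical component. The property ``$\tilde\gamma$ is homotopic to a geodesic joining two distinct bananas'' is, after fixing the combinatorial lift, an open condition on the representation (the endpoints of the two bananas vary continuously with $\rho$, and ``distinct'' is open; distinctness of the two banana-pairs is where I must be careful — see below). Hence it holds for all $\rho$ in a neighborhood of $\rho_0$, and in particular for infinitely many geometric ones.

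For part~\eqref{cross3}, suppose in addition that at $\rho_0$ the homotopy from $\tilde\gamma$ to $g$ is an isotopy — equivalently, $\tilde\gamma$ does not weave through any third banana $B$ in the sense of the definition preceding the proposition, and moreover the interior of $\tilde\gamma$ misses all other bananas entirely. Weaving, and more generally transverse intersection with a banana axis, is governed by finitely many strict inequalities among the ideal endpoints of bananas in a compact portion of $H$ (the finitely many bananas whose ideal points lie in a bounded region), so ``no weaving'' is again an open condition in $\rho$. Combined with part~\eqref{cross2}, it persists on a neighborhood of $\rho_0$ and hence for the same infinite family of geometric representations. I would organize this last step by choosing, once and for all at $\rho_0$, a finite set of bananas $B_1,B_2,B_3,\dots,B_N$ containing all those that $\tilde\gamma$ or nearby geodesics could possibly meet, and noting that for $\rho$ sufficiently close to $\rho_0$ the same finite set suffices by continuity of ideal endpoints.

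The main obstacle is the passage from ``open condition'' to an honest statement in part~\eqref{cross2}: one must check that distinctness of the two bananas $B_1,B_2$ is genuinely preserved under deformation — a priori two bananas that are distinct at $\rho_0$ could, for a perturbed representation, acquire a shared ideal point (or even coincide) if the combinatorics of the cover degenerate. This is where the standing hypothesis that the closed knot-neighborhood meets itself in at most one point, together with the remark in Section~\ref{section:geometricreps} that distinct bananas have distinct ideal endpoints for a \emph{fixed} neighborhood, must be leveraged: the relevant cusp cross-section deforms continuously and stays embedded for $\rho$ near $\rho_0$, so the two bananas stay distinct. Making this ``stays embedded / stays distinct'' claim precise — rather than merely plausible — is the technical heart of the argument and is presumably what forces the hypotheses on $D$ and on $N(K)$ to be stated exactly as they are.
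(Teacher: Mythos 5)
There are two genuine gaps here, one in each of the first two parts.

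For part~\eqref{cross1} your argument presupposes the geometric picture (the endpoint of $\tilde\gamma$ lands on a second banana $B_2$ with ideal points distinct from those of $B_1$, and $\rho(\gamma)$ is the order-two elliptic swapping them), but that picture is only available for geometric representations and, even then, only after part~\eqref{cross2} has ruled out the degenerate cases; part~\eqref{cross1} is asserted for \emph{any} $\PSL_2(\C)$ representation, for which there need be no bananas at all. The paper's argument is purely formal and avoids this: choose a peripheral arc $\beta$ from $p_2$ to $p_1$ so that $\beta\gamma$ is a loop; then $\rho(\beta\gamma)$ is defined because it is a loop, $\rho(\beta)$ is defined by Lemma~\ref{lemma:extendtoperipheral}, and one sets $\rho(\gamma)=\rho(\beta)^{-1}\rho(\beta\gamma)$.

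For part~\eqref{cross2} your strategy (establish the statement at $\rho_0$, then propagate to nearby representations because it is an ``open condition'') is a genuinely different route from the paper's, but it is not completed at exactly the point where the work lies. You yourself flag that distinctness of $B_1$ and $B_2$ might not persist under deformation and call making this precise ``the technical heart''; that is correct, and leaving it unproved leaves the proposition unproved. Two further problems with the openness argument: (i) the non-weaving condition, which you defer to part~\eqref{cross3}, is already needed in part~\eqref{cross2} --- without it the straight-line homotopy in $\H^3$ from $\tilde\gamma$ to the common geodesic may be forced to cross the axis of a \emph{third} banana, so the homotopy need not take place in $H(\rho)$ and ``unique geodesic in $H(\rho)$'' is not justified; and (ii) both the non-weaving and the distinctness conditions quantify over the infinitely many bananas of the cover, so they are not a finite intersection of open conditions, and your reduction to ``finitely many bananas in a bounded region'' needs an argument for incomplete structures where the developing map need not be injective. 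The paper replaces all of this with quantitative control: it takes the representations to be holonomies of hyperbolic Dehn fillings $M(s_n)$ converging geometrically to $M$, uses the $(1+\tfrac1n,\epsilon)$-quasi-isometries to show that $\ell_{s_n}(\gamma)\to\ell_0(\gamma)<\infty$, and then shows that weaving through a banana, or having both endpoints on one banana, or having the axis meet a banana axis, would each force $\ell_{s_n}(\gamma)\to\infty$ (or contradict tautness). That length-convergence argument is the missing engine; if you want to keep your ``prove at $\rho_0$ and deform'' architecture, you would need to supply an equivalent uniform estimate.
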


\begin{proof}

Let $p_1$ and $p_2$ be the initial and terminal points of  the cusp-to-cusp arc ${\gamma}$, i.e. $p_1$ and $p_2$ lie on the boundary torus. We can choose a peripheral arc $\beta$ from $p_2$ to $p_1$, making the concatenation $\beta \gamma$  a loop. Therefore, for any representation $\rho$, $\rho(\beta \gamma)$ is defined, and by Lemma~\ref{lemma:extendtoperipheral} so is $\rho(\beta)$ (if $\rho$ is parabolic, it is defined up to a meridianal direction, which we can fix).  As a result, $\rho(\gamma)= \rho(\beta)^{-1} \rho(\beta \gamma)$ is uniquely defined. This proves (\ref{cross1}).

Now we prove (\ref{cross2}). We consider $M=S^3-N(K)$ for a fixed neighborhood $N$ corresponding to a fixed horoball neighborhood of the cusp $C$  in the complete hyperbolic metric.  Let $\{ s_n\}$ be a sequence of slopes on $\partial C$ such that the length of their geodesic representative approaches infinity. Denote by $M(s)$ the manifold that results from a Dehn filling of $M$ along the slope $s$. By Thurston's hyperbolic Dehn surgery theorem (See \cite{purcell} Theorem 6.29 for this version with the geometric limit) for large enough $n$, the Dehn filled manifolds $M(s_n)$ are hyperbolic and approach $M$ as a geometric limit.

For $x\in M$, let $B_0(x, r)$  be the ball of radius $r$ about $x$ considered in the complete metric on $M$ and let $B_s(x, r)$ be the ball of radius $r$ about $x$ considered in the induced metric by $M\subset M(s)$.
As such, for all $\epsilon>0$ and all $r>0$ there exists an integer $N$ such that if $n>N$, then there  is a $(1+\tfrac1n, \epsilon)$-quasi-isometry
$f_n:B_{s_n}(x, r)  \rightarrow B_0(x, r)$. Therefore for any $y$ in $B_0(x,r)$ we have
\[
\frac{1}{(1+\tfrac1n)} d_{s_n}(x,y) -\epsilon \leq d_0(x,y) \leq (1+\tfrac1n) d_{s_n}(x,y)+\epsilon,
\]
where $d_0$ represents distance in $M$ with the complete metric, and $d_{s}$ represents distance in the metric induced from $M(s)$.  We conclude that for a path $\gamma$ in $M$, the length $\ell_{s}(\gamma)$ considered in the metric induced from $M(s)$ converges to the length $\ell_0(\gamma)$, considered in $M$ with the complete metric.

Let $\rho_s$ be the representation that corresponds to $s$.

 \textit{Claim 1.} For infinitely many $s$, no lift  $\tilde{\gamma}$ in $H_{\rho_s}$  of the cusp-to-cusp arcs $\gamma$  in $M$  weaves through  bananas.

Proof of claim: assume  the claim does not hold. Suppose in $H_{\rho_s}$, the lift $\tilde{\gamma}$  has initial and terminal points on bananas $B_{1,s}$ and $B_{2,s}$, where these are possibly the same banana. To arrive to a contradiction, it is enough to show that in the geometric limit, the length $\ell_{s}(\gamma)$ gets arbitrarily large, showing that for such $s$ there is no weaving.  Let $B_{3,s}$ denote a banana that $\tilde{\gamma}$ weaves through. Up to a conjugation, we may assume that $B_{1,s}$ has an ideal point at $(0, 0)$, and $B_{3,s}$ has an ideal point at $(0, 1)$ on the plane $z=0$ in the upper half-space model of $\mathbb{H}^3$.  Then in the limit, the other ideal point of $B_{3,s}$ approaches 1 as well.  Hence $\ell_{s}(\gamma)\rightarrow \infty$, since $\tilde{\gamma}$ gets arbitrarily close to $\partial \H^3$. This concludes the proof of Claim 1.

We now show that $\tilde{\gamma}$ cannot have initial and terminal points on the same banana for infinitely many $s$.  If  it was true,   apply Claim 1.  Then for infinitely many $s$ from the claim, $\tilde{\gamma}$ is homotopic rel endpoints to a curve lying on a banana.  It follows that $\gamma$ is homotopic rel endpoints to a curve on $\partial M$ which contradicts the tautness assumption.

Therefore, we can assume that $\tilde{\gamma}$ has initial and terminal points on different bananas. Then $\tilde{\gamma}$ is homotopic to a geodesic in $\H^3$.  To show that $\tilde{\gamma}$  can  be homotoped in $H(\rho_s)$ to a unique geodesic it then suffices  to show that  the axis of $\tilde{\gamma}$ does not intersect the axis of any banana. Let $G\in \H^3$ denote the unique geodesic line containing $\tilde{\gamma}$ with ideal endpoints $z_1$ and $z_2$. Using Claim 1, we may assume that $\tilde{\gamma}$ does not weave through any bananas.

\textit{Claim 2.} For infinitely many $s$ satisfying Claim 1 the following holds. For any lift $\tilde{\gamma}$ in $H_{\rho_s}$ of a cusp-to-cusp arc $\gamma$, $G$ does not intersect the axis of a banana.

  Proof of Claim 2.
    Up to conjugation, we can take $z_1=\infty$, and assume that $z_1$ and $z_2$ remain fixed in the geometric limit, when $M(s_n)$ are hyperbolic and approach $M$ as above. In this limit,  any $B$ converges to a horosphere. This implies that in the limit, the two ideal endpoints of banana $B_{i,s}, i=1, 2,$ get closer together in the Euclidean distance on $\partial\H^3$. Since $z_1$ and $z_2$ are fixed, both endpoints of any banana $B$ whose axis intersects $G$ must similtaneously approach one of the endpoints $z_1$, or $z_2$. But this implies that in the limit, both ideal endpoints of  any such $B$ must coincide with the endpoints of $B_{i,s}$ for $i=1$ or $2$. This cannot occur, since we showed that $\tilde{\gamma}$ cannot have initial and terminal points on the same banana. This concludes the proof of Claim 2.

This shows that in the geometric limit, there are infinitely many Dehn filling representations $\rho$ so that for all crossing arcs $\gamma$, any lift $\tilde{\gamma}$ in $H(\rho)$ is homotopic to a unique geodesic connecting two distinct bananas, as desired.

If this is an isotopy for $\rho_0$ then there are no self-intersections in the homotopy of the arc.  As this is independent of the geometric structure, the same holds for the infinitely many representations as above. This proves (\ref{cross3}).\end{proof}

The following lemma shows that the matrix determined by a crossing arc $\gamma$ is independent of the choice of neighborhood of the knot, and therefore independent of the choice of $H$.

We call two bananas \textit{nested}, if they share all ideal points.
 If such bananas are distinct, they correspond to  differently scaled neighborhoods of a knot.

\begin{lemma}\label{LemmaCrossinggeodesics}
Let  $\gamma$ be a crossing arc  in $M$, and $\tilde{\gamma}$ be a lift of $\gamma$ to $H$.  Up to conjugation, the element in $\PSL_2(\C)$ determined by $\tilde{\gamma}$ (as in the Remark \ref{isometries}) has  the form $ (\pm)  \mat{0}{c}{-c^{-1}}{0}$. Furthermore, $c$ is uniquely determined up to a sign if we specify that the isometry preserves the meridianal direction.
\end{lemma}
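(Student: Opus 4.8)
The plan is to exploit the elliptic-of-order-two description of the crossing-arc isometry from Remark \ref{isometries} together with the normalization already available for the meridian from Lemma \ref{lemma:meridian}. By Proposition \ref{Thm:crossingarc}(\ref{cross1}), the crossing arc $\gamma$ determines a well-defined element $\rho(\gamma) \in \PSL_2(\C)$, and (for the infinitely many geometric representations of part (\ref{cross2})) $\tilde{\gamma}$ is homotopic to a geodesic $G$ connecting the ideal endpoints of two distinct bananas $B_1,B_2$. The isometry associated to $\tilde{\gamma}$ exchanges $B_1$ and $B_2$ while fixing $G$ set-wise; such an isometry is an involution (rotation by $\pi$ about $G$), so $\rho(\gamma)^2 = I$ in $\PSL_2(\C)$. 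First I would record that a nontrivial element of $\PSL_2(\C)$ of order two is conjugate to $(\pm)\mat{i}{0}{0}{-i}$, whose axis has ideal endpoints $0$ and $\infty$; equivalently, any $\pi$-rotation about a geodesic with ideal endpoints $z_1 \neq z_2$ has a representative matrix with trace $0$.

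Next I would fix the conjugation. Because the element is an involution about the geodesic $G$ joining the two (distinct) ideal banana endpoints $z_1, z_2$, I would conjugate so that $z_1 = 0$ and $z_2 = \infty$ — this is the axis of the standard matrix $\mat{0}{c}{-c^{-1}}{0}$, which indeed has trace $0$, squares to $-I$ (hence to $I$ in $\PSL_2(\C)$), and swaps $0 \leftrightarrow \infty$. A short computation shows that \emph{every} trace-zero element of $\PSL_2(\C)$ fixing $\{0,\infty\}$ as a set is of this form: writing a general matrix $\mat{a}{b}{c}{d}$ with $a+d = 0$ and requiring it to preserve $\{0,\infty\}$ forces $a = d = 0$, and the determinant-one condition gives $bc = -1$, i.e. the matrix is $\mat{0}{c}{-c^{-1}}{0}$. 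So up to conjugation $\rho(\gamma)$ has exactly the claimed form, establishing the first assertion. (One must also note $\rho(\gamma) \neq I$: the arc connects two \emph{distinct} bananas with distinct ideal points, so the isometry is genuinely a nontrivial $\pi$-rotation, not the identity.)

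For the uniqueness-up-to-sign of $c$, the issue is that the residual conjugations fixing the normal form $\mat{0}{c}{-c^{-1}}{0}$ up to the stated constraints form a small group, and I would pin down exactly how $c$ transforms under them. Conjugating by a diagonal matrix $\mat{t}{0}{0}{t^{-1}}$ sends $c \mapsto t^2 c$, so without further constraint $c$ is only well-defined up to the squaring action — hence the hypothesis "the isometry preserves the meridianal direction" must be what removes this ambiguity. Here I would invoke Lemma \ref{lemma:extendtoperipheral} (and the convention from Section \ref{section:geometricreps} that the meridional translation is unit-length in the positive real direction): once the meridian's direction along each banana is fixed, the affine identification of the bananas is rigid, so the only freedom left is the conjugation $\mat{0}{1}{-1}{0}$ (or replacing $c$ by $-c$, i.e. the $\PSL_2(\C)$ sign), which swaps $c \leftrightarrow -c^{-1}$ — but this is precisely the act of reversing the orientation of $\gamma$ (exchanging the roles of the two endpoints), and with $\gamma$ oriented and the meridianal direction fixed, $c$ is determined. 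The diagonal conjugation by $\mat{t}{0}{0}{t^{-1}}$ with $|t| \neq 1$ would rescale the banana neighborhood (producing a nested banana), changing the horospherical/banana normalization and hence is excluded by the meridianal-direction normalization; with $t$ a root of unity the constraint again collapses it to $\pm c$.

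The main obstacle I anticipate is the uniqueness statement rather than the normal-form statement: one has to argue carefully that fixing "the meridianal direction" really does eliminate the full $c \mapsto t^2 c$ ambiguity, which requires relating the abstract affine structure on the bananas (Remark \ref{isometries}) to the concrete matrix normalization $\rho(\mu) = \Mx$ of Lemma \ref{lemma:meridian} — i.e. tracking how the meridian matrix $\Mx$, which fixes $\infty$ and $i_0(m)$, interacts with a crossing-arc matrix whose axis runs $0$ to $\infty$, and checking that compatibility forces $c$ (not just $c$ up to scaling). I would handle this by explicitly writing both $\rho(\mu)$ and $\rho(\gamma)$ in the common frame and showing the only conjugation preserving $\Mx$ in its normalized form is trivial (or the sign), so no rescaling of $c$ survives.
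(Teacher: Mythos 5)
Your normal-form computation (conjugate the geodesic to run from $0$ to $\infty$, observe that the associated element is a trace-zero involution swapping the two ideal points, and solve for the anti-diagonal form) is essentially the paper's argument. But there are two gaps. First, you take from Remark \ref{isometries} that the element associated to $\tilde{\gamma}$ \emph{is} the order-two involution exchanging the bananas. The paper does not assume this: it starts from the isometry $x$ that merely sends the non-ideal endpoint $\tilde{p}_1$ to $\tilde{p}_2$, and then \emph{proves} that $x$ swaps the ideal points $z_1, z_2$ and fixes the geodesic set-wise. The mechanism is a nested-banana argument: shrinking the cusp neighborhood replaces $B_1, B_2$ by nested bananas with the same ideal points while $\tilde{p}_i$ approaches $z_i$, and since $x$ is the same isometry for every choice of neighborhood it must send $z_1$ to $z_2$ (hence $B_1$ to $B_2$, hence $z_2$ back to $z_1$). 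This is also exactly what establishes that the matrix is independent of the choice of neighborhood of the knot --- the stated purpose of the lemma in the surrounding text --- and your proposal never addresses it.

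Second, the uniqueness of $c$ up to sign. You correctly identify the residual $c \mapsto t^2 c$ ambiguity and correctly guess that the meridianal normalization kills it, but you defer the actual argument (``I would handle this by\ldots''), and the sketch you give is shaky: conjugating by $\mathrm{diag}(t,t^{-1})$ does not ``produce a nested banana,'' it moves the whole configuration, and nothing about roots of unity is relevant. The paper's argument is concrete: in the non-parabolic case the fixed point $f=\pm ci$ of the involution determines the modulus of $c$ (and the fixed point can be computed directly from the requirement that $\tilde{p}_1$ is sent to $\tilde{p}_2$), while the requirement that $B_1$ is carried to $B_2$ --- equivalently, the dihedral angle between the meridian directions at the two intersection points of $\tilde{\gamma}$ with $B_1$ and $B_2$ --- determines $\arg c$ modulo $\pi$. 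The parabolic case needs separate treatment, which your stabilizer analysis does not provide: there the paper compares meridian-direction vectors on the horosphere at $\infty$ and on the tangent plane to the horoball at $0$, checks that nested horoballs give parallel vectors, and reads off $\arg c$ mod $\pi$ from the dihedral angle between them. Without some version of these computations the ``uniquely determined up to a sign'' claim is asserted rather than proved.
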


\begin{proof} We give an orientation to the taut diagram.   Fix a cusp neighborhood of the knot.
Let $\gamma$ be a crossing arc on $T$ between points $p_1$ and $p_2$ on the cusp.
  By Theorem~\ref{Thm:crossingarc},  up to homotopy we may assume   that $\tilde{\gamma}$ is a geodesic.  Let $z_1$ and $z_2$ be   its two ideal endpoints, and $\tilde{p}_1$ and $\tilde{p}_2$ its two non-ideal endpoints  on bananas. Let $X$ be a matrix defined by the isometry $x$ of $H$ that sends $\tilde{p}_1$  to $\tilde{p}_2$.

  By Theorem~\ref{Thm:crossingarc}, $z_1, z_2$ belong to distinct bananas, say $B_1, B_2$ respectively.  If $B_i$ is not a horoball, let $z_i'$ be the other its ideal point. We will show that $x$ is independent of the choice of the neighborhood of the knot. Taking smaller or larger neighborhoods of the knot results in smaller or larger  bananas, nested in $B_1$ and $B_2$, with the same ideal points. The points $\tilde{p}_1$ and $\tilde{p}_2$  approach the ideal points, $z_1$ and $z_2$ respectively, upon making the cusp neighborhood smaller. Therefore,  the isometry  $x$ must take $z_1$ to $z_2$, and therefore take $B_1$ to $B_2$. As a result, if the bananas are not horoballs, the isometry must take $z_1'$ to $z_2'$ as well.  (These are distinct bananas, and since $H$ is a cover, they must be disjoint.)    Since the isometry $x$ takes $\tilde{p}_1$ to $\tilde{p}_2$, and $z_1$, $\tilde{p}_1$, $z_2$, and $\tilde{p}_2$ are on $A$, we conclude that it fixes the ideal geodesic $A$ from $z_1$ to $z_2$ set wise. Since $x$ takes $z_1$ to $z_2$, and fixes both $A$ and $\partial \mathbb{H}^3$ setwise, it must take $z_2$ to $z_1$.

  When the bananas are not horoballs, i.e. in the non-parabolic case, the choice of a different neighborhood clearly results in the same isometry $x$ as for larger bananas and the same matrix $X$. Upon conjugating so that $z_1=\infty$ and $z_2=0$, we see that the isometry $x$ corresponds to a matrix of the form
 \[ X =(\pm) \mat{0}{c}{-c^{-1}}{0}.\]
If $f$ is a fixed point of $x$, a simple computation  shows that $f=\pm c i$. Hence a fixed point $f$ determines the modulus of $c$.  (Fixed points can be computed directly from the fact that the isometry takes $\tilde{p}_1$ to $\tilde{p}_2$ as well.)  The argument of $c$, up to $\pm n\pi$, can be determined  by the fact that $B_1$ is taken to $B_2$. Alternatively, it can be determined from the fixed point $f$. More generally,  $X$ is uniquely determined up to sign by the  dihedral angle between two meridians that start at points of intersection of $\tilde{\gamma}$ and $B_1, B_2$.

 In the parabolic case, choosing a different neighborhood results in nested horoballs, so one can still conjugate to a matrix of the above form.  It is enough to see that $c$ is uniquely determined up to sign.
 With $B_1$ a horoball at infinity, and $B_2$ a horoball with ideal point at $0$, we consider the horosphere boundary of $B_1$ and the tangent plane to $B_2$ at $\tilde{p}_2$, both of which are horizontal planes.  On these planes, consider a vector beginning at $\tilde{p}_i$ in the meridian direction. In particular, on $\partial B_1$, the meridian lifts to paths in a Euclidean plane, and we consider the vectors from the initial to terminal points of these paths in the Euclidean structure on the horosphere. This is well-defined after choosing an orientation for the meridian. The isometry will take $\tilde{p}_1$ to $\tilde{p}_2$ and exchange these vectors.  Nested horoballs will yield vectors in the same direction. Therefore, the dihedral angle between these vectors in the two planes determines the argument of $c$ up to $\pm \pi n$.
\end{proof}

\begin{remark}\label{altW} The above shows that the crossing arcs (up to conjugation) are given by  $(\pm) \mat{0}{c}{-c^{-1}}{0}$. Such a matrix squares to $\pm I$.  
\end{remark}

Consider two crossing arcs $\gamma_1, \gamma_2$ with a single peripheral arc $\beta$ between them, and their preimages $\tilde{\gamma}_1, \tilde{\gamma}_2, \tilde{\beta}$ in $\mathbb{H}^3$. For a parabolic representation, $\tilde{\beta}$ is a Euclidean geodesic on a horosphere in $H$, and the center of this horosphere is an ideal point that belongs to both geodesics on which $\tilde{\gamma}_1$ and $\tilde{\gamma}_2$ lie. For a non-parabolic representation, the arc $\tilde{\beta}$ lies on a banana $B$ that is not a horosphere, with two ideal points $z_1$ and $z_2$.  Fig.\ref{CrossingArcs2} shows the situation that Lemma~\ref{LemmaIdealpointsofbananas} proves {\it does not} occur, and Fig.\ref{CrossingArcs3} shows the situation that {\it does} occur.  Note that the geodesic corresponding $\tilde{\gamma}_1$ or $\tilde{\gamma}_2$ cannot coincide with the axis of any banana on the figure, as otherwise the crossing arc would be contained in every neighborhood of the knot. Therefore, shrinking the neighborhood of the knot does not change geodesic connecting the endpoints of two bananas.

\begin{figure}[h]
\centering
\begin{subfigure}[b]{0.47 \textwidth}
\centering
\includegraphics[scale=.7]{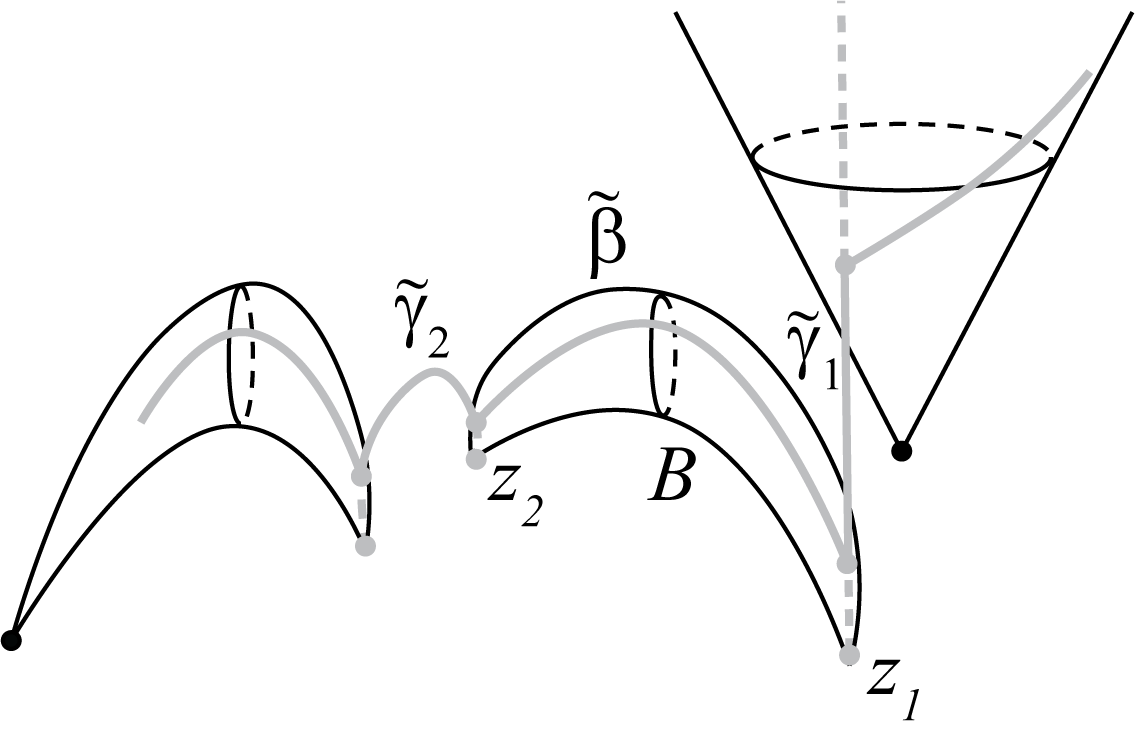}
\caption{By Lemma~\ref{LemmaIdealpointsofbananas}, this {\it does not} occur.}\label{CrossingArcs2}
\end{subfigure}
\begin{subfigure}[b]{0.47 \textwidth}
\centering
\includegraphics[scale=.69]{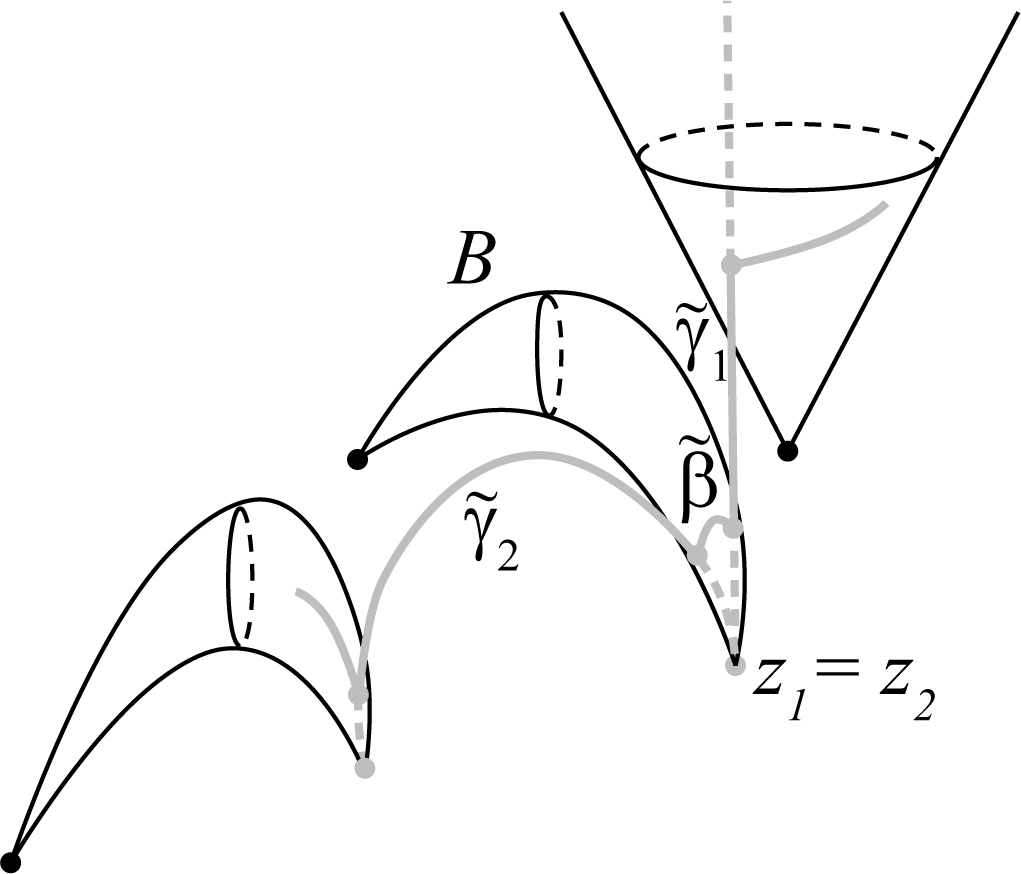}
\caption{By Lemma~\ref{LemmaIdealpointsofbananas}, this {\it does} occur. }\label{CrossingArcs3}
\end{subfigure}
\caption{A lift of $\beta$ to $H$, with $z_1\neq z_2$ on the left, and  $z_1=z_2$ on the right.}
\label{z1=z2}
\end{figure}

\begin{lemma}\label{LemmaIdealpointsofbananas}
Consider two crossing arcs which occur with only a single peripheral arc between them. The preimages of these crossing arcs share an ideal point.
\end{lemma}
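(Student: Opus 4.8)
The plan is to work in the upper half-space model and set up coordinates so that the shared structure becomes visible. Let $\gamma_1$ and $\gamma_2$ be the two crossing arcs, and $\beta$ the single peripheral arc between them, with preimages $\tilde\gamma_1, \tilde\gamma_2, \tilde\beta$ in $H$. By Proposition~\ref{Thm:crossingarc}, for infinitely many geometric representations $\tilde\gamma_i$ is homotopic to a geodesic $G_i$ with two distinct ideal endpoints, and $\tilde\beta$ lies on a banana $B$. Since this is an algebraic condition on a Zariski dense set (see Section~\ref{section:Zariski}), it suffices to prove the conclusion for these representations. First I would identify the four relevant bananas: $\tilde\gamma_1$ connects bananas $B_0$ and $B$, and $\tilde\gamma_2$ connects $B$ and $B_2$ (both endpoints of $\beta$ lie on the common banana $B$, since $\beta$ is a \emph{single} peripheral arc). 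Name the ideal endpoints of $B$ as $z_1, z_2$. The claim to prove is that $G_1$ and $G_2$ share an ideal point, which will turn out to be one of $z_1, z_2$ — or rather, that $B$ degenerates in the sense that $z_1 = z_2$, so that both $G_i$ pass through this single ideal point.

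The key geometric input is the argument already used in the proof of Proposition~\ref{Thm:crossingarc}, Claims 1 and 2: in the geometric limit toward the complete (parabolic) structure, every banana converges to a horosphere, so its two ideal endpoints collide. For the parabolic representation $\rho_0$ itself, $\tilde\beta$ lies on a horosphere, whose single center is necessarily an ideal point of \emph{both} geodesics carrying $\tilde\gamma_1$ and $\tilde\gamma_2$ — this is exactly the horoball picture from \cite{MR3190595} and is spelled out in the paragraph preceding the lemma. So the statement holds at $\rho_0$. The task is then to propagate this: I would argue that "sharing an ideal point" is a closed condition on the (canonical component of the) representation variety that in fact holds on a Zariski dense subset, hence everywhere, OR — more in the spirit of the paper's direct approach — show it directly for the Dehn-filling representations $\rho_{s_n}$ using the quasi-isometry estimates: the endpoints $z_1, z_2$ of $B$ converge to a common point, while the non-ideal endpoints $\tilde p$ of $\tilde\beta$ on $B$ stay near it, forcing both $G_1$ and $G_2$ to pass arbitrarily close to that point; combined with the fact (from Lemma~\ref{LemmaCrossinggeodesics} and the remark before this lemma) that shrinking the cusp neighborhood does not change the geodesics $G_i$, one concludes the shared ideal point persists.

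Concretely, here is the cleaner route I would actually write. Conjugate so that $B$ has ideal endpoints $z_1 = \infty$ and $z_2 = 0$ is false in general — instead, suppose for contradiction that $G_1$ and $G_2$ have four distinct ideal endpoints; then neither $G_i$ can pass through both $z_1$ and $z_2$, so each $G_i$ has at most one endpoint among $\{z_1, z_2\}$. Since $\tilde\gamma_i$ starts or ends on $B$ (its non-ideal endpoint $\tilde p_i$ lies on $B$), and $\tilde\gamma_i$ is homotopic rel endpoints to $G_i$ staying in the convex region, the endpoint of $G_i$ "at the $B$ side" must be one of $z_1, z_2$; say $G_1$ ends at $z_1$ (or $z_2$) and $G_2$ ends at $z_1$ (or $z_2$). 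If $G_1$ ends at $z_1$ and $G_2$ ends at $z_2$ with $z_1 \neq z_2$, I get the configuration of Fig.~\ref{CrossingArcs2}; I would rule this out by pushing to the geometric limit as in Claim 2 of Proposition~\ref{Thm:crossingarc}: in that limit $z_1$ and $z_2$ coalesce, contradicting that they stay a definite distance apart (which they must, as the geodesics $G_i$ are unchanged by rescaling the cusp, yet a nondegenerate banana near a horosphere would force $\tilde\gamma_i$ to have unbounded length, exactly the $\ell_s(\gamma) \to \infty$ contradiction). Hence $z_1 = z_2$, i.e. $G_1$ and $G_2$ share the ideal point $z_1 = z_2$, which is the assertion. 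I expect the main obstacle to be making rigorous the step "$\tilde\gamma_i$ meets $B$ forces the $B$-side ideal endpoint of $G_i$ to be $z_1$ or $z_2$" — that is, ruling out that $G_i$ meets $B$ only transversally with both its ideal endpoints elsewhere; this needs the weaving analysis (the arc does not weave through bananas) together with the fact that the non-ideal endpoint $\tilde p_i$ on $B$ tends to an ideal endpoint of $B$ as the cusp shrinks, exactly as in the proof of Lemma~\ref{LemmaCrossinggeodesics}.
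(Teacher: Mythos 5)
You correctly identify the configuration that must be ruled out (the one of Fig.~\ref{CrossingArcs2}: the geodesics $G_1$ and $G_2$ entering the banana $B$ at its two distinct ideal points $z_1\neq z_2$), and you correctly observe that each $G_i$ must share an ideal point with $B$ because its non-ideal endpoint on $B$ tends to an ideal point as the cusp neighborhood shrinks. But the mechanism you propose for the contradiction --- passing to the geometric limit of Dehn-filling representations $\rho_{s_n}\to\rho_0$ and arguing that ``$z_1$ and $z_2$ coalesce, contradicting that they stay a definite distance apart'' --- does not work. Nothing forces $z_1$ and $z_2$ to stay a definite distance apart as the \emph{representation} varies: for each fixed $\rho_{s_n}$ the banana simply has two distinct ideal points at some (possibly tiny) separation, and in the limit they merge into the center of a horosphere; all the relevant lengths ($\ell_{s_n}(\beta)$ and $\ell_{s_n}(\gamma_i)$) stay bounded in this limit, so the $\ell_s(\gamma)\to\infty$ weaving argument you invoke from Claim 1 of Proposition~\ref{Thm:crossingarc} has no purchase here. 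The Zariski-density route also fails as stated: you verify the conclusion only at the single representation $\rho_0$, and a single point is not dense in the canonical component; to get a dense set of representations where it holds you would already need the direct argument.

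The missing idea, which is the engine of the paper's proof, is to fix the representation and shrink the \emph{cusp neighborhood} instead. The isometry $\rho(\beta)$ is a fixed non-parabolic element of $\PSL_2(\C)$ whose axis is the axis of $B$ (Lemma~\ref{lemma:extendtoperipheral}); its trace, hence its complex translation length, does not depend on the choice of neighborhood, so the distance by which it displaces a point lying on a banana of shrinking radius about its axis remains bounded. On the other hand, if $z_1\neq z_2$, the two endpoints of $\tilde\beta$ (the intersections of $\tilde\gamma_1$ and $\tilde\gamma_2$ with $B$) converge to the two distinct ideal points $z_1$ and $z_2$ as the neighborhood shrinks, so their hyperbolic distance tends to infinity. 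Since $\rho(\beta)$ carries one endpoint to the other, this is the contradiction; it is carried out one representation at a time and needs neither the geometric limit nor any density argument.
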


\begin{proof}
It suffices to consider the case when the bananas are not horospheres.
In the language established above,  we need to show that $z_1=z_2$ (see Fig.\ref{z1=z2}). Suppose that $z_1\neq z_2$. Regardless of the neighborhood of the knot that is chosen, $\tilde{\beta}$ intersects   $\tilde{\gamma}_1$ and $\tilde{\gamma}_2$, since $\beta$ intersects $\gamma_1$ and $\gamma_2$. If we take smaller neighborhoods of the knot, then these intersection points (the starting and ending points of $\tilde{\beta}$) get closer to $z_1$ and $z_2$, respectively, in the Euclidean metric in the upper-half space. Since we are assuming that $z_1\neq z_2$, this means for any distance $d$, there is some  neighborhood of the knot such that  these two intersection points are greater than distance $d$ apart in that neighborhood, where $d$ is the hyperbolic distance. Simultaneously, the Euclidean length of $\tilde{\beta}$ grows as well.

But the peripheral arc $\beta$ has a well-defined translation distance.

Indeed, regardless of the neighborhood, the geodesic path $\tilde{\beta}$ corresponds to a fixed non-parabolic isometry in $\PSL_2(\C)$ by Lemma \ref{lemma:extendtoperipheral}, with a fixed trace. Hence $\tilde{\beta}$ corresponds to a translation of a fixed distance, and a fixed rotation angle. Up to conjugation, the matrix corresponding to the isometry has the form $X=(\pm) \mat{\ell}{0}{0}{\ell^{-1}}$. Then the complex length of $\beta$ in $M$  is  $\ell_0+i\theta$ where $\ell_0=2\log |\lambda|$ with
\[ \lambda =\frac12 ( \tr X \pm \sqrt{ (\tr X)^2 -4}) =\frac12  ( (\ell+\ell^{-1} ) \pm (\ell-\ell^{-1}))= \ell \text{ or } \ell^{-1}  \] 
 and   where $\theta$ is the angle of rotation (see, for example, Lemma 12.1.2 in \cite{MR1937957}). Alternately, $\text{cosh}(\ell/2)=\pm \tr X/2$.  \
We conclude that $z_1=z_2$.
\end{proof}

\section{Geometric Set-Up: Paths}\label{Paths}

To define a representation $\rho$ from $\pi_1(S^3-K)$ to  $\mathrm{PSL}_2(\C)$, we need only define $\rho$ for Wirtinger generators, and show that the Wirtinger relations hold.

 Call a path in $M=S^3-K$ {\em a path on the top of the knot} if it is the concatenation of peripheral arcs and crossing arcs, and can be homotoped   to lie entirely above the projection plane for the knot.  That is, if the path can be pulled up off the surface of the knot and is not interwoven with the knot.   Fig.\ref{CrossingArcs6} and \ref{CrossingArcs4} show fragments of such paths in grey, while the cusp neighborhood (thickened knot) is in black.    We will refer to paths which lie entirely on $T$ (and their lifts) as {\em peripheral paths}.

\begin{figure}[b]
\begin{subfigure}[b]{0.49 \textwidth}
\centering
\includegraphics[scale=1.4]{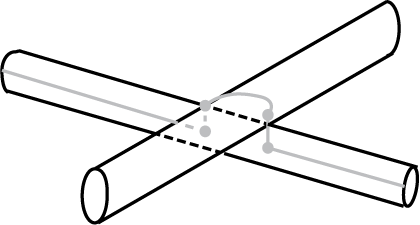}
\caption{A fragment of a path along the top of the knot.}
\label{CrossingArcs6}
\end{subfigure}
\begin{subfigure}[b]{0.49 \textwidth}
\centering
\includegraphics[scale=0.9]{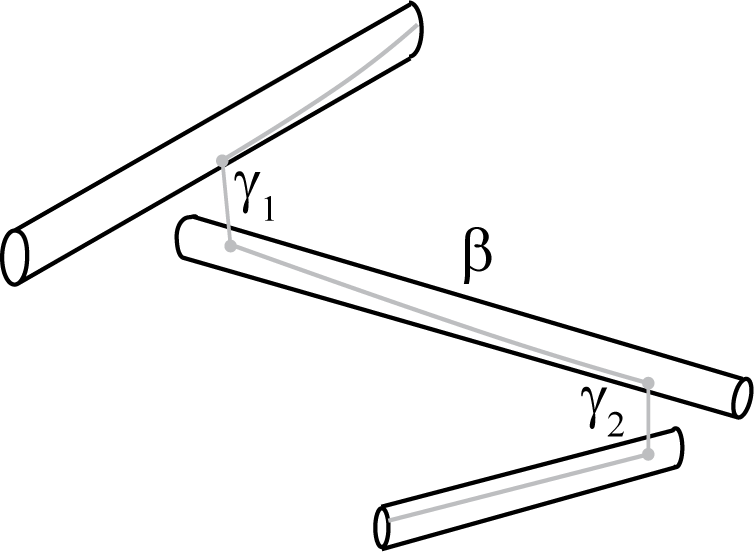}
\caption{The path $\alpha$.}
\label{CrossingArcs4}
\end{subfigure}
\caption{}
\end{figure}

For a region $S$ of a diagram $D$, consider a loop $\alpha_S$ on top of the knot that follows the boundary of $S$ and is homotopically trivial.  Assume $\alpha_S$ is the concatenation of arcs alternating between peripheral and crossing ones. We call $\alpha_S$ a {\em loop associated to the region $S$}.

Recall that $b$ is a base point on $\partial M$, and $\mu$ is a meridian around the overpass that $b$ lies on. Here and further we will introduce more basepoints, denoting them by $b_i$.

Two overpasses, or the two corresponding Wirtinger generators, or an overpass and an underpass, or two peripheral arcs are \underline{adjacent} if they meet at a crossing. Two such overpasses are pictured in Fig.\ref{FigureAdjacentarcs} in black color, and two adjacent peripheral arcs in grey color.

\begin{figure}[h!]
\centering
\includegraphics[scale=0.9]{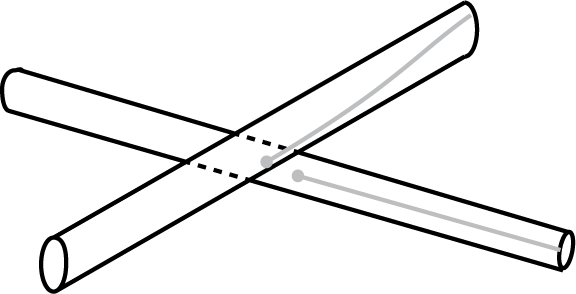}
\caption{Two adjacent peripheral arcs.}
\label{FigureAdjacentarcs}
\end{figure}

\subsection{Normalizing Wirtinger Generators}\label{section:setupPSL} 
In this subsection, we show that up to a conjugation one can simultaneously specify matrices of a particularly nice form for $\rho(\mu)$ and $\rho(w_q)$, if $q$ is adjacent to the overpass where $b$ and $\mu$ lie.

By the triple transitivity of $\PSL_2(\C)$, there are three degrees of freedom to fix a conjugacy class representative of a representation.  In Lemma~\ref{lemma:meridian} we use two of these to fix the format of $\rho(\mu)$ for our preferred meridian $\mu$.  The proposition below uses the remaining degree of freedom to fix a preferred meridian adjacent to $\mu$ as lower triangular. With these choices, the conjugacy class representative is now determined.

 \begin{prop}\label{PropositionConjugatingmeridianandwirt}
Let $\rho:\pi_1(M) \rightarrow \mathrm{PSL}_2(\C)$ be a geometric representation, and  let the overpass $q$ be adjacent to the overpass with the base point $b$. Then up to simultaneous  conjugation \[ \rho(\mu)= \Mx \  \text{and} \ \rho(w_q)=(\pm)\mat{m^{-1}}{0}{d}{m}.\]
We can  choose $|m|\geq 1$ and if $|m|=1$ then $\arg(m)\leq \pi$.  Unless $m=\pm i$, $\Mx$ and $\rho(w_q)$ are unique.

Moreover, this normalization corresponds to taking a lift of $\mu$ to lie on a banana with ideal  point(s) $i_0$ and $\infty$ ($i_0=\infty$ when $m=\pm 1$), and taking a lift of $\mu_q$ to be on a banana with an ideal point at 0. The crossing arc between the respective overpasses then lifts to a geodesic from 0 to $\infty$.

\end{prop}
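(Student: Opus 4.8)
The plan is to start from the meridian normalization of Lemma~\ref{lemma:meridian}, pin down exactly the conjugation freedom that remains, and then spend it on $\rho(w_q)$ using the crossing arc that joins the two overpasses. By Lemma~\ref{lemma:meridian} we first conjugate so that $\rho(\mu)=\Mx=(\pm)\mat{m}{1}{0}{m^{-1}}$ with $|m|\ge 1$, and $\arg(m)\le\pi$ when $|m|=1$; by Remark~\ref{Mbanana} this puts a lift $\tilde\mu$ of $\mu$ on the banana $B_0$ with ideal points $\infty$ and $i_0$ (where $i_0=\infty$ if $m=\pm1$). The conjugations $g$ that preserve the condition $\rho(\mu)=\Mx$ are exactly the elements of the centralizer $Z(\Mx)\subset\PSL_2(\C)$, and each such $g$ fixes $\Mx$. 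When $m\ne\pm1$, $Z(\Mx)$ is the one–parameter group fixing both $\infty$ and $i_0$; when $m=\pm1$ it is $\{\mat{1}{t}{0}{1}\}$; and when $m=\pm i$ (so $\Mx^2=I$) it acquires a second component of maps interchanging $\infty$ and $i_0$. In all cases $\infty$ is preserved by $Z(\Mx)$, except possibly swapped with $i_0$ in the last case. This is the one remaining degree of freedom.

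Next observe that the Wirtinger generator $w_q$ is a meridian, hence conjugate to $\mu$ in $\pi_1(M)$, so $\rho(w_q)$ is conjugate to $\Mx$: it is not $I$, it has trace $\pm(m+m^{-1})$ and eigenvalues $\{m,m^{-1}\}$, and it acts along the banana $B_q$ carrying a lift $\tilde\mu_q$ of the simple meridian $\mu_q$, whose ideal point(s) are its fixed point(s). Since $q$ and the $b$–overpass meet at a crossing, there is a crossing arc $\gamma$ between them; $\rho$ extends to $\gamma$ by Proposition~\ref{Thm:crossingarc}(\ref{cross1}) and, together with peripheral arcs, by Lemma~\ref{lemma:extendtoperipheral}, giving a loop representing $\mu_q$ and hence a relation $\rho(w_q)=\rho(\gamma)\,\rho(\mu)\,\rho(\gamma)^{-1}$ (up to conjugation by peripheral elements, which lie in $Z(\Mx)$ and will be absorbed below). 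By Proposition~\ref{Thm:crossingarc}(\ref{cross2}) and Lemma~\ref{LemmaCrossinggeodesics}, a compatible lift $\tilde\gamma$ is homotopic to a geodesic from an ideal point $z_1$ of $B_0$ to an ideal point $z_2$ of $B_q$, and $\rho(\gamma)$ is the order–two involution $(\pm)\mat{0}{c}{-c^{-1}}{0}$ once $z_1,z_2$ are sent to $\infty,0$. Reading the crossing off the diagram with the orientation conventions of Section~\ref{Background} forces $z_1$ to be the $m$–eigenvalue ideal point $\infty$ of $\Mx$; moreover $z_2\ne i_0$, since otherwise $\tilde\gamma$ would be the axis of $B_0$, which Lemma~\ref{LemmaCrossinggeodesics} excludes.

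Now use the residual freedom: some $g\in Z(\Mx)$ fixes $\infty=z_1$ and carries $z_2$ to $0$ (possible since $z_2\ne i_0$). After conjugating by $g$, $\tilde\gamma$ runs from $0$ to $\infty$, so $\rho(\gamma)=(\pm)\mat{0}{c}{-c^{-1}}{0}$, and the relation above gives, by a direct $2\times2$ computation, $\rho(w_q)=(\pm)\mat{m^{-1}}{0}{d}{m}$ with $d=-c^{-2}$; this simultaneously proves the ``moreover'' clause, since $\tilde\mu$ lies on the banana with ideal points $i_0,\infty$, $\tilde\mu_q$ on $B_q$ which now has $0$ as an ideal point, and $\tilde\gamma$ is the geodesic from $0$ to $\infty$. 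For uniqueness, a further conjugation preserving both $\rho(\mu)=\Mx$ and this form of $\rho(w_q)$ lies in $Z(\Mx)$ and must keep the $m$–eigenvalue fixed point of $\rho(w_q)$ at $0$; for $m\ne\pm1$ it then fixes the three points $\infty,i_0,0$ and is trivial, and for $m=\pm1$ it is $\mat{1}{t}{0}{1}$ with $t=0$. Only when $m=\pm i$ does the extra component of $Z(\Mx)$ survive: a map there can interchange $\infty\leftrightarrow i_0$ while carrying the other fixed point of $\rho(w_q)$ to $0$, producing a second admissible normalization, which is the stated exception.

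The step I expect to be the main obstacle is the compatibility check in the second paragraph: extracting the relation $\rho(w_q)=\rho(\gamma)\rho(\mu)\rho(\gamma)^{-1}$ from the diagram and verifying that it meshes with the eigenvalue labelling of Lemma~\ref{lemma:meridian} — that is, that the crossing geodesic reaches $B_0$ at $\infty$ rather than at $i_0$, and that it is $\Mx$ (not $\Mx^{-1}$) that is conjugated. Everything else follows from Lemmas~\ref{lemma:meridian}, \ref{lemma:extendtoperipheral}, \ref{LemmaCrossinggeodesics} and Proposition~\ref{Thm:crossingarc}, or is a short matrix manipulation.
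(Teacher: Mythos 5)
Your proof follows the same route as the paper's: normalize $\rho(\mu)=\Mx$ via Lemma~\ref{lemma:meridian}, realize $w_q$ as a conjugate of a meridian by the crossing arc joining the two overpasses, use Proposition~\ref{Thm:crossingarc} and Lemma~\ref{LemmaCrossinggeodesics} to place the crossing geodesic at $\{0,\infty\}$ with the involution matrix $(\pm)\mat{0}{c}{-c^{-1}}{0}$, and compute $\rho(w_q)=(\pm)\mat{m^{-1}}{0}{-c^{-2}}{m}$. Your explicit centralizer analysis of the residual conjugation freedom and of the $m=\pm i$ exception is more detailed than the paper's one-line appeal to the uniqueness of $\Mx$ and of the crossing matrix (the paper defers that discussion to Remark~\ref{RemarkNonuniqueness}), but the argument is essentially identical.
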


 \begin{proof}

By Lemma~\ref{lemma:meridian} and Remark \ref{Mbanana}, the element $\rho(\mu)$ has the above matrix form, and this corresponds to taking a preferred banana $B_1$ to have ideal points $i_0$ and $\infty$. Let a crossing arc $\gamma$ start at $b$. We can realize $w_q$ as $\gamma^{-1} \mu_q \gamma$. If $b'$ is the other non-ideal endpoint of $\gamma$, then $\mu_q$ is a meridian around the overpass that $b'$ lies on. We can assume that the crossing arc $\gamma$ lifts to a hyperbolic geodesic $\tilde{\gamma}$ which goes from $B_1$ to a second banana $B_2$ by Theorem~\ref{Thm:crossingarc}.  We can normalize so that $B_1$ is as above and $B_2$ has an ideal vertex at 0.  i.e. so that the geodesic  $\tilde{\gamma}$ has ideal endpoints at 0 and $\infty$.
By Lemma~\ref{LemmaCrossinggeodesics}, the element of $\PSL_2(\C)$ associated to this lift of $\gamma$ has the form $(\pm) \mat{0}{c}{-c^{-1}}{0}$, and is uniquely determined.   Therefore, the element in $\PSL_2(\C)$ associated to $w_q$ is
\[ (\pm) \mat{0}{c}{-c^{-1}}{0} \Mx \mat{0}{c}{-c^{-1}}{0} = (\pm) \mat{m^{-1}}{0}{-c^{-2}}{m}. \]

Uniqueness follows from the uniqueness of $\Mx$ and $(\pm)\mat{0}{c}{-c^{-1}}{0}$.
 \end{proof}

 \begin{remark}\label{RemarkNonuniqueness}
As stated, the format used in Proposition~\ref{PropositionConjugatingmeridianandwirt} is unique unless $m=\pm i$. Here we remark on this case.
 Two (equivalence classes of) matrices of the same trace which span an irreducible subspace can be conjugated into the form
\[ (\pm) \mat{m}{1}{0}{m^{-1}} \text{ and  } (\pm) \mat{m^{-1}}{0}{d}{m}\]
and this is unique unless $m=\pm i$. If $m=\pm i$,  we can conjugate by matrices of the form
 \[ (\pm ) \mat{\sqrt{(d+4)/d}}{-2i/\sqrt{d(d+4)}}{0}{\sqrt{d/(d+4)}} \]
 which will take $\Mx$ and $\rho(w_2)$ type matrices to matrices of the form
 \[
 (\pm) \mat{i}{1}{0}{-i}= \text{ and } (\pm) \mat{i}{0}{4+d}{-i}  \]
respectively.

There are only finitely many conjugacy classes of representations which are traceless for the meridian  (that is, $\tr(\rho(\mu))=0$) on a  canonical component for a hyperbolic knot complement. Otherwise, the traceless condition would hold on a component of dimension at least one on the character variety, as it is an algebraic condition in the traces. Since the canonical component of the character variety of a knot is a curve (see Section \ref{Background}), an infinite number of traceless characters would necessitate that all characters are traceless.  However, a discrete and faithful representation takes the meridian to a parabolic matrix with trace $\pm 2$.

\end{remark}

\subsection{Degenerate Geometries.}

 We now prove a    non-degeneracy lemma.

\begin{lemma}\label{LemmaDegeneracy1}
Let $S$ be a region in a taut diagram $D$ of $K$, with more than two sides, and $\alpha_S$ be a loop associated to $S$.  Then $\alpha_S = ...\gamma_1\beta\gamma_2...$, where $\beta$ is a peripheral arc, and $\gamma_i, i=1,2,$ are crossing arcs. Suppose in a fundamental region for $S^3-K$, preimages $\tilde{\gamma_1}, \tilde{\gamma_2}$ intersect bananas $B_0, B_1$ and $B_1, B_2$ respectively. If $\rho(\beta)= I$, then $\rho(\gamma_1)= \rho(\gamma_2)$, and $B_0=B_2$.
\end{lemma}

Fig.\ref{LemmaTrivPeriph} shows the situation discussed in the lemma in the {\it non-degenerate case}, when the bananas $B_0$ and $B_2$ are distinct. The lemma gives a sufficient algebraic condition for the geometry to be degenerate, meaning $B_0=B_2$.

\begin{figure}[h!]
\centering
\includegraphics[scale=.75]{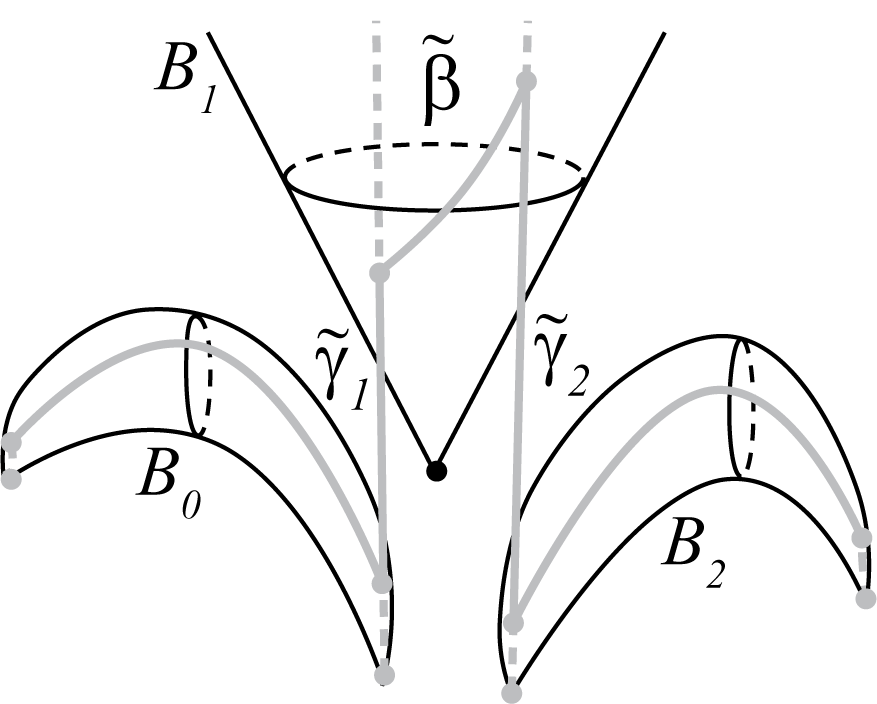}
\caption{A lift of $\alpha$ to $H$ in a non-degenerate case.}
\label{LemmaTrivPeriph}
\end{figure}

\begin{proof}

Perform an isometry of $\mathbb{H}^3$ placing $B_1$ so that its ideal points are $\infty$ and $i_0$, i.e. $B_1=B_{\infty}$.

Since $\rho(\beta)=I$, the arc $\beta$ is trivial, and  the two intersection points of $\tilde{\gamma_i}$ with $B_1$ are the same. This is true regardless of how small the neighborhood of $K$ is.  We conclude that  $\tilde{\gamma_1}, \tilde{\gamma_2}$ must be the same ideal geodesic.

 The banana $B_0$ is pierced by $\tilde{\gamma_1}$ in one point, and therefore must have ideal point (say $0$) in common with the ideal geodesic $\tilde{\gamma_1}$.  Also, $B_2$ must have an ideal point in common with $\tilde{\gamma_2}$.  It follows that $B_0, B_2$ share an ideal point.  But then $B_0=B_2$.  Otherwise their interiors intersect, or they are parallel copies of each other.  The interiors cannot intersect by our choice of cusp neighborhood. They cannot be parallel copies since this would imply that the same crossing arc pierces $T$ twice  on one side of the arc.  That is, the geodesic arc pierces $T$ a total of at least 3 times.

 It follows that the arcs $\gamma_1$ and $\gamma_2$, up to orientation are the same.  Since $\rho(\gamma_i)$ is an involution by Lemma~\ref{LemmaCrossinggeodesics}, the result follows.

\end{proof}

\subsection{Representing Paths.}\label{definingrepresentations}

We now show that our  representation  can be extended to  paths along the top of the knot using  representations of peripheral and crossing arcs, as well as meridians. In what follows, it is useful to have labeling of all arcs. 

\begin{definition}\label{remark:naturallabeling} 

\begin{enumerate}
\item  Let $D$ be an oriented taut diagram of the knot $K$, with $n$ crossings. Denote the edges of $D$ by $E_1, \dots ,E_{2n}$ according to the orientation of $K$, starting from a fixed basepoint $b$. We may assume that  $b$ is an endpoint of a crossing arc.  Define two base points, $b_i$ and $b_i'$, to be the intersections of $E_i$ with the first and the second crossing arcs, according to the orientation of $E_i$.  Let $\beta_i$ be the peripheral arc beginning at $b_i$ and traveling in the direction agreeing with the orientation of $K$ to $b_i'=b_{i+1}$.  This induces a labeling on the crossing arcs so that $\gamma_i$ is the crossing arc beginning at $b_i$. We will call this the \underline{natural labeling} of arcs. Note that since there is no orientation for crossing arcs, each crossing arc $\gamma_i$ in a taut knot diagram will have two different labels, $\gamma_i=\gamma_j$ for some $j$, where the crossing arc is adjacent to the edges $E_i, E_j$.  

 \item We will distinguish between {\em left} and {\em right} peripheral arcs $\beta_{i,L}$ and $\beta_{i,R}$ from $b_i$ to $b_i'$,  as in Figure \ref{LeftRight}.  They travel on  left or right side of a thickened edge respectively.

\begin{figure}[h!]
\centering
\includegraphics[scale=0.88]{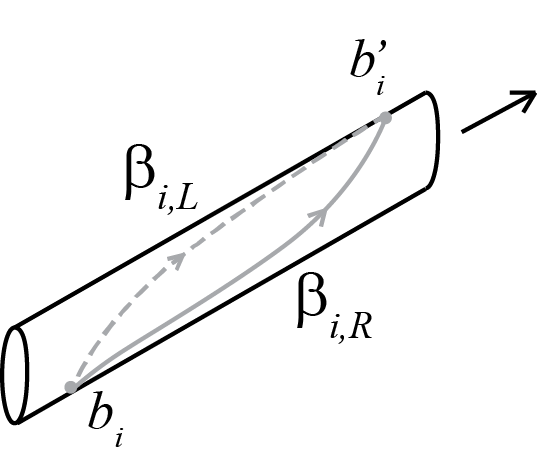}
\caption{Left and right paths}
\label{LeftRight}
\end{figure}

\item For an oriented peripheral arc $\beta$, we write $\beta^{-1}$ to indicate $\beta$ with the orientation reversed. By Lemma~\ref{LemmaCrossinggeodesics}, for any geometric representation $\rho$, the representation of a crossing arc  $ \rho(\gamma_{ij})$ is an involution, and hence $\rho(\gamma_{ij})=\rho(\gamma_{ij})^{-1}$.  We therefore will not orient the crossing arcs or use $-1$ exponents for them.
\end{enumerate}
\end{definition}

As first observed by Thislethwaite and detailed in Section 3 of \cite{MR3190595},
\begin{equation}\label{EquationEdges1} \beta_{i,L} = \mu^{g_i} \beta_{i,R}\end{equation}
where \begin{equation}\label{EquationEdgesExp}
g_i = \begin{cases}
 \phantom{-}1, &\text{if $E_i$ travels from an overpass to an underpass;}\\
	   {-} 1,      &\text{if $E_i$ travels from an underpass to an overpass;} \\
   \phantom{-}0,       &\text{if $E_i$ travels from an overpass to an overpass} \\
    & \quad \text{or from an underpass to an underpass.}
   \end{cases}
\end{equation}

If a peripheral arc occurs as part of a path, the homotopy type of the path will determine the choice of $\beta_{i,R}$ and $\beta_{i,L}$.
 As a result, we will often write just $\beta_i$.

A path $\alpha$ along the top of the knot can be written as a sequence of peripheral arcs (left or right for an edge), and crossing arcs. If the $i^{th}$ edge is level, the path along the top of the knot contains no crossing arc between two peripheral arcs, say $\beta_{i-1}$ and $\beta_i$.  We then write $\beta_{i} \gamma^0 \beta_{i-1}$  for this portion of the path, where $\gamma$ is the crossing arc based at $b_i$.  This is depicted in Fig.\ref{TwoOverpasses}.  Algebraically we also can consider this as a single peripheral path $\beta = \beta_i \beta_{i-1}$ along the concatenation of $E_i$ and $E_{i-1}$.

A path on top of the knot can contain a meridian (as one of the constituent peripheral arcs) if it follows a peripheral arc situated on an underpass, then hops over a crossing as a meridianal loop, and then comes back to the same underpass, continuing to its next peripheral arc. Such a situation is depicted in Fig. \ref{MeridianPath}, where the edges that form the underpass are labeled by $E_i, E_j$, and the adjacent overpass has an edge labeled by $E_k$. This fragment of the path can be written as $\beta_j \gamma \mu_k \gamma \beta_i$, where $\gamma$ is the crossing arc.

It follows that any path $\alpha$ along the top of a knot can be written as an alternating  sequence of peripheral arcs and crossing arcs.

Note that if $\alpha$ is a loop traveling around a region, then $\beta_i$ cannot be a meridian for any $i$, since any meridian would correspond to  Fig. \ref{MeridianPath} which does not occur here.

 \begin{figure}[h]
\centering
\begin{subfigure}[b]{0.53 \textwidth}
\centering
\includegraphics[scale=0.87]{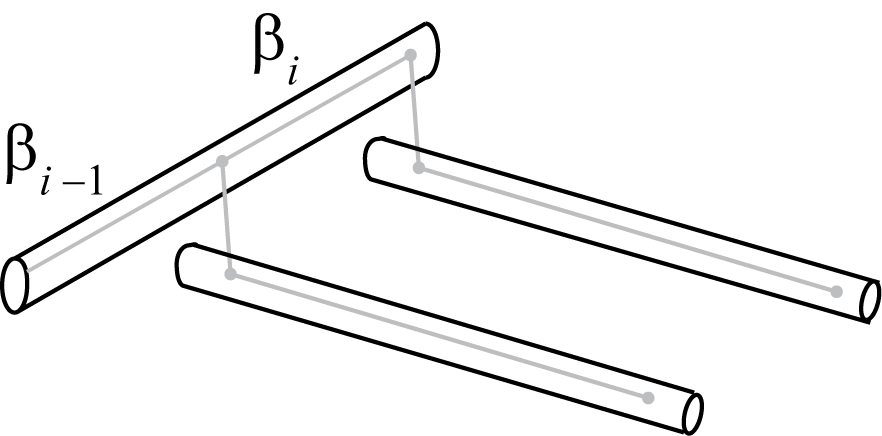}
\caption{Non-alternating link fragment.}\label{TwoOverpasses}
\end{subfigure}
\begin{subfigure}[b]{0.46 \textwidth}
\centering
\includegraphics[scale=1.2]{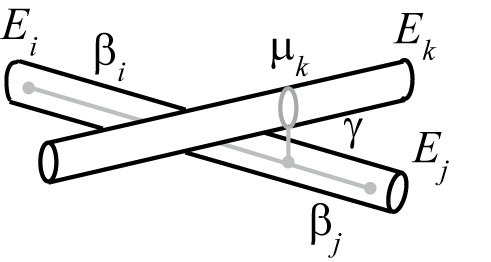}
\caption{Meridianal peripheral path}\label{MeridianPath}
\end{subfigure}
\caption{}
\label{Passes1}
\end{figure}

The following proposition summarizes what we proved in Sections 3 and 4. We will later use in the algorithm.

\begin{prop}\label{PropFormofpathalongtop} Any geometric representation  can be extended to preimages of peripheral arcs,  crossing arcs, and paths $\alpha$ on top of the knot.
In particular, let $\alpha$ be a path in a diagram $D$ homotopic to a sequence of peripheral and crossing arcs, and let $\tilde{\alpha}$ be a preferred lift of $\alpha$ in $H$.
For any such path,

\begin{equation}\label{eq1}\alpha = \gamma_{m_i} \beta^{\pm1}_{m_i} \dots \gamma_1 \beta^{\pm1}_{1}, \end{equation} and
\begin{equation}\label{eq2} \rho(\tilde{\alpha}) = \rho(\tilde{\gamma}_{m_i})  \rho(\tilde{\beta}^{\pm1}_{m_i}) \dots \rho(\tilde{\gamma}_1) \rho(\tilde{\beta}^{\pm1}_{1}), \end{equation}

\noindent where each ${\beta}_{m_i}=\beta_j$ is a peripheral arc on some edge $E_j$ (either left or right arc, or a simple meridian), each $\tilde{\gamma}_{m_i}$ is a  crossing arc, the exponents are uniquely determined by $\alpha$ and knot orientation,
and the endpoints of the lifts of arcs are chosen to coincide so that $\tilde{\alpha}$ is a path in $H$. 
\end{prop}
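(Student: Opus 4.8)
The plan is to assemble Proposition~\ref{PropFormofpathalongtop} directly from the tools developed in Sections~\ref{GeometricSetup} and~\ref{Paths}, treating it as a bookkeeping statement rather than a new geometric input. First I would recall that by Lemma~\ref{lemma:extendtoperipheral} a geometric representation $\rho$ uniquely extends to any preimage $\tilde{\beta}$ of a peripheral arc (in the parabolic case after fixing a meridianal direction, which we do once and for all), and that by Proposition~\ref{Thm:crossingarc}(\ref{cross1}) it uniquely extends to any preimage $\tilde{\gamma}$ of a crossing arc. Since a path along the top of the knot was shown, in the discussion preceding the proposition, to be homotopic to an alternating concatenation $\gamma_{m_i}\beta_{m_i}^{\pm1}\cdots\gamma_1\beta_1^{\pm1}$ of such arcs --- the two potentially confusing local models being the level-edge fragment $\beta_i\gamma^0\beta_{i-1}$ of Fig.~\ref{TwoOverpasses} and the meridianal fragment $\beta_j\gamma\mu_k\gamma\beta_i$ of Fig.~\ref{MeridianPath}, both of which fit the alternating pattern once $\gamma^0$ is interpreted as the identity and $\mu_k$ as a simple meridian (hence a special peripheral arc, per the remark after Lemma~\ref{lemma:meridian}) --- the extension of $\rho$ to $\tilde\alpha$ is obtained by composing the extensions to the constituent lifts.

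The key steps, in order, are as follows. (1) Fix a lift $\tilde b_0$ of the initial basepoint of $\alpha$; this determines, by Lemma~\ref{lemma:extendtoperipheral} and Proposition~\ref{Thm:crossingarc}(\ref{cross1}), a unique lift of the first arc in the decomposition and hence a unique terminal lift of its endpoint, which then serves as the initial lift for the next arc, and so on inductively down the word \eqref{eq1}. This produces the preferred lift $\tilde\alpha$ and shows each $\rho(\tilde\gamma_{m_i})$, $\rho(\tilde\beta_{m_i}^{\pm1})$ is a well-defined element of $\PSL_2(\C)$ with endpoints of successive lifts matching. (2) Because composition of isometries of $H$ corresponds to multiplication in $\PSL_2(\C)$, the isometry taking $\tilde b_0$ to the terminal lift is exactly the ordered product \eqref{eq2}. (3) Verify that the exponents on the peripheral arcs are forced: on each edge $E_i$ the path uses either $\beta_{i,L}$ or $\beta_{i,R}$ (and possibly its inverse), and which one is dictated by the side of the thickened edge the path runs along and the direction of travel relative to the orientation of $K$; this is exactly the content of \eqref{EquationEdges1}--\eqref{EquationEdgesExp} and Definition~\ref{remark:naturallabeling}. (4) Observe the final clause: if $\alpha$ is a loop bounding a region, no $\beta_{m_i}$ can be a simple meridian, because a simple meridian only arises in the Fig.~\ref{MeridianPath} configuration, which requires the path to leave and return to an underpass, and that does not happen along the boundary of a region --- this was already noted in the text and only needs to be restated.

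The main obstacle is not any single estimate but making the combinatorial reduction airtight: one must be sure that \emph{every} path homotopic to a concatenation of peripheral and crossing arcs really does reduce to the strictly alternating normal form \eqref{eq1}, i.e. that two peripheral arcs are never forced to be adjacent without an intervening crossing arc except in the level-edge case (handled by inserting $\gamma^0$), and that two crossing arcs are never adjacent without an intervening peripheral arc (which would be the $z_1=z_2$ situation governed by Lemma~\ref{LemmaIdealpointsofbananas}, and is either genuinely absent along a top path or collapses consistently). I would dispatch this by walking through the finite list of local pictures of how a top path can traverse a crossing --- over an overpass, around an underpass via a meridian, or along a level edge --- and checking each produces an alternating block; since all the geometric content (uniqueness of the extensions, the involution property $\rho(\gamma)=\rho(\gamma)^{-1}$ from Lemma~\ref{LemmaCrossinggeodesics}, the edge relation \eqref{EquationEdges1}) is already in hand, what remains is genuinely just careful case enumeration plus the observation that isometry composition is matrix multiplication.
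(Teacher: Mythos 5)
Your proposal is correct and follows essentially the same route as the paper: equation \eqref{eq1} is taken as a summary of the preceding combinatorial discussion (level edges via $\gamma^0$, meridianal hops via Fig.~\ref{MeridianPath}), and \eqref{eq2} follows because Lemma~\ref{lemma:extendtoperipheral} and the crossing-arc results (the paper cites Lemma~\ref{LemmaCrossinggeodesics} where you cite Proposition~\ref{Thm:crossingarc}(\ref{cross1}) --- an immaterial difference) give a unique element of $\PSL_2(\C)$ for each constituent lift, which are then composed. Your added care about chaining the lifts inductively and about the alternating normal form only makes explicit what the paper leaves implicit.
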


 \begin{proof} The equation (\ref{eq1}) summarizes the above discussions. To prove the rest, including equation (\ref{eq2}), note that by Lemma~\ref{lemma:extendtoperipheral}, a geometric representation $\rho$ determines a unique element of $\PSL_2(\C)$ corresponding to the chosen lift of $\beta_i$ (in the parabolic case, when the meridianal direction is chosen). Similarly, if $\gamma_i$ is a crossing arc, then Lemma~\ref{LemmaCrossinggeodesics} shows that after specifying the specific lift of $\gamma_i$ in $H$, this determines a unique element in $\PSL_2(\C)$ as well.
 \end{proof}

\section{Normalizing a Representation: Nice Matrices and Relations}\label{Normalizing}


In this section we show that elements of geometric representations have a ``nice" normalization. That is, up to conjugation we can write these elements in a prescribed way that captures the geometry of the corresponding path. In particular, we show that arcs, path and Wirtinger generators in a knot complement described in Proposition~\ref{PropFormofpathalongtop}  correspond to specific elements of $\PSL_2(\C)$, in matrix form. This is subsections \ref{normalizationofarcs} for arcs, and subsection \ref{normalizationofpaths} for paths. Moreover, we show that these``nice" elements of $\PSL_2(\C)$ satisfy simple relations. This is subsection \ref{simplerelations}.

In the next section, we will prove the reverse implication: we will show that we can define a representation using such normalized elements, assuming a few conditions are satisfied. Finally, in later sections, we will use all of this to outline the algorithm for finding character variety directly from a knot diagram.

\subsection{Nice Matrices for Peripheral and Crossing Arcs}\label{normalizationofarcs}

To represent a  Wirtinger generator $w_i$ of $\pi_1(S^3-K)$ up to homotopy, take the respective loop  $w_i=\alpha_i^{-1} \mu_i \alpha_i$ where  $\alpha_i$ is a path on top of the knot from $b$ to $b_i$. As Wirtinger generators are conjugate to the standard meridian, we must have $\rho(w_i)=\Ax \Mx \Ax^{-1}$ for some $\Ax\in \PSL_2(\C)$, with $\Mx$ described in Definition~\ref{definition:meridian} and determined in Lemma~\ref{lemma:meridian}. 

By Remark~\ref{Mbanana} and Lemma~\ref{lemma:meridian},  the matrix $\Mx$ corresponds to the conjugate of $\rho(\tilde{\mu})$ where $\tilde{\mu}$ lies on a banana $B$ with  endpoints $i_0$ and $\infty$.  Using lemma \ref{lemma:extendtoperipheral}, for a peripheral arc $\beta$,  there is a  conjugate of $\rho(\beta)$  corresponding to  $\rho(\tilde{\beta})$ for $\tilde{\beta}$ on $B$ which  will always be upper triangular and commute with $\Mx$, since $\tilde{\beta}$ and $\tilde{\mu}$ lie on the same banana.  With this in mind we have the following definitions.

\begin{definition}\label{definition:PxandCx}
 Let an \underline{edge matrix} $\Px_i$ be the conjugate of $\rho(\tilde{\beta}_i)$ that is upper triangular; $\Px_i$ is of the form $(\pm) \mat{v_i}{u_i}{0}{v_i^{-1}}$.  Let a \underline{crossing matrix} $\Cx_i$ be the conjugate of $\rho(\tilde{\gamma}_i)$ that is of the form $(\pm) \mat{0}{c_i}{-c_i^{-1}}{0}$ as given in Lemma~\ref{LemmaCrossinggeodesics}. Here and further, the $\pm$ sign before a matrix refers to the fact that we are working in $\PSL(2, \mathbb{C})$, i.e. actually with the equivalence classes of matrices. We will also reserve a notation $\mathcal{W}_i$ for the alternative form of the crossing matrix: $\mathcal{W}_i=(\pm) \mat{0}{-w_i}{1}{0}$. With $w_i =c_i^2$ we then have $\Wx_i = -c_i  \Cx_i$.
\end{definition}

\begin{remark}\label{orientation}Note that if $\Px_i$ is associated to a peripheral arc  $\beta_i$, and the arc $\beta_i$ appears in some path $\alpha$ with the orientation opposite to the orientation of the edge of $D$ where $\beta_i$ is (i.e. it appears as $\beta_i^{-1}$), then we substitute $\Px_i$ with $\Px_i^{-1}$. In what follows, we will therefore often use $\Px_i^{\pm1}$, referring to this context.
\end{remark}

For a crossing arc $\gamma_i$, the associated $\Cx_i$ from Definition~\ref{definition:PxandCx} is uniquely determined by Lemma~\ref{LemmaCrossinggeodesics}.  In the next lemma, we will establish that a similar fact holds for a peripheral arc $\beta_i$ and the associated matrix $\Px_i^{\pm1}$.

A matrix  $\Px_i^{\pm1}$ commutes with $\Mx$ exactly when  the fixed points of $ \Px_i^{\pm1}$ and the fixed points of $\Mx$ to coincide, since then they have the same axis. This set up corresponds to conjugating so that a meridian and a chosen peripheral arc are on the same preferred banana.

 \begin{definition}\label{def:PandC}
We call the matrix equation $(\pm) \Px_i\Mx = (\pm) \Mx\Px_i$, or the equivalent equation  $(m-m^{-1})u_i=(\pm)(v_i-v_i^{-1})$   the \underline{commuting equation}. Recall that by Definition~\ref{definition:meridian}, $m$ is the $(1,1)$ entry of $\rho(\mu)=\Mx$. If we have the matrix $\Px_i^{-1}$ instead, as in Remark \ref{orientation}, the commuting equation is  $(\pm) \Px_i^{-1}\Mx= (\pm) \Mx\Px_i^{-1}$ with  $(m-m^{-1})u_i=(\pm)(v_i-v_i^{-1})$ unchanged. 
\end{definition}

\begin{remark}\label{remark:commutingsign}
In practice (e.g in the algorithm that will follow), will use the equation $(m-m^{-1})u_i= (v_i-v_i^{-1})$ which amounts to assigning a sign to the matrices $\Px_i$. In Remark~\ref{remark:liftingtoSL}, we discuss how this affects signs in the representation as a whole.  Often it is useful to eliminate the $u_i$ variables using $u_i= (v_i-v_i^{-1})/(m-m^{-1})$. This substitution does not work for parabolic representations because the commuting equation is then trivial.
\end{remark}

\begin{lemma}\label{lemma:Pmatrixunique}
Let $\beta_i$ be a peripheral arc. The matrix $\Px_i^{\pm1}$ associated to $\beta_i$ as in Definition~\ref{definition:PxandCx} is uniquely determined given that $\Px_i^{\pm1}$ satisfies the commuting equation.
\end{lemma}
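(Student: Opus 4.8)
The plan is to argue that the two pieces of data pinning down a conjugate of $\rho(\tilde\beta_i)$ — namely, which banana the lift $\tilde\beta_i$ sits on, and what the isometry does along that banana — are both recovered by the stated conditions, so that $\Px_i^{\pm1}$ is forced. First I would recall the setup from Lemma~\ref{lemma:extendtoperipheral}: a geometric representation $\rho$ extends uniquely to a chosen preimage $\tilde\beta_i$, so $\rho(\tilde\beta_i)$ is a well-defined element of $\PSL_2(\C)$, and $\Px_i^{\pm1}$ is by definition an upper-triangular conjugate of it. The conjugating freedom that remains after fixing $\rho(\mu)=\Mx$ by Lemma~\ref{lemma:meridian} is exactly the stabilizer of the banana $B$ with ideal points $i_0$ and $\infty$ (i.e., conjugation by matrices commuting with $\Mx$, which are themselves upper triangular with the same axis). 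So $\Px_i^{\pm1}$ is only ambiguous up to conjugation by this abelian group — and conjugation by an upper-triangular matrix fixing $i_0,\infty$ preserves the $(1,1)$-entry $v_i$ but can alter the off-diagonal entry $u_i$.

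The key point is then: the commuting equation $(m-m^{-1})u_i = \pm(v_i - v_i^{-1})$ removes precisely this last ambiguity. I would spell this out as follows. The entry $v_i$ is a conjugation invariant (it is determined by the trace, up to the $v_i \leftrightarrow v_i^{-1}$ swap, and the upper-triangular normalization together with the ordering conventions inherited from $\Mx$ fix which of the two it is — the same reasoning as in Lemma~\ref{lemma:meridian} applies since $\tilde\beta_i$ lies on the \emph{same} banana $B$ as $\tilde\mu$, whose ideal points are already ordered as $i_0,\infty$). Requiring $\Px_i^{\pm1}$ to commute with $\Mx$ forces its fixed points to coincide with those of $\Mx$, i.e. forces $\Px_i^{\pm1}$ to fix $i_0$ and $\infty$; but an upper-triangular matrix with $(1,1)$-entry $v_i$ fixes $i_0 = -1/(m-m^{-1})$ if and only if its off-diagonal entry equals $(v_i - v_i^{-1})\cdot i_0^{-1}\cdot(\pm 1)$ — a direct computation — which is exactly $u_i = \pm(v_i-v_i^{-1})/(m-m^{-1})$. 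Thus $u_i$ is determined, and hence $\Px_i^{\pm1}$ is uniquely determined. In the non-parabolic case this is the whole story; for the parabolic case I would note, as Remark~\ref{remark:commutingsign} does, that the commuting equation degenerates, and instead invoke the supplementary datum in Lemma~\ref{lemma:extendtoperipheral} (the chosen meridianal direction) which fixes $u_i$ directly — so uniqueness still holds once that convention is in force.

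The main obstacle I anticipate is bookkeeping about \emph{which} conjugate we are allowed to take and in what sense "upper triangular" pins down $v_i$ rather than $v_i^{-1}$: one must be careful that the residual conjugation freedom really is only by the centralizer of $\Mx$ (not, say, by the Weyl-type element swapping $0$ and $\infty$, which would swap $v_i \leftrightarrow v_i^{-1}$). This is handled by observing that such a swap would move the preferred banana $B$ off of $i_0,\infty$ and hence change $\Mx$ itself, which we have fixed; so it is excluded. A secondary subtlety is the $\pm$ in the commuting equation, reflecting the $\PSL_2$ (rather than $\SL_2$) setting and the sign of the oriented arc $\beta_i$ versus $\beta_i^{-1}$; I would point out that, as in Remark~\ref{orientation}, passing from $\Px_i$ to $\Px_i^{-1}$ leaves the relation $(m-m^{-1})u_i = \pm(v_i-v_i^{-1})$ formally unchanged, so the uniqueness statement is insensitive to that choice.
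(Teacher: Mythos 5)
Your proof is correct and follows essentially the same route as the paper's (much terser) argument: conjugation-invariance of the trace pins down $v_i$, and the commuting equation then pins down $u_i$. The extra care you take about the residual conjugation freedom being the centralizer of $\Mx$ (excluding the $v_i\leftrightarrow v_i^{-1}$ swap) and about the degenerate parabolic case is detail the paper leaves implicit; the only blemish is the intermediate formula $(v_i-v_i^{-1})\cdot i_0^{-1}$, which should read $-(v_i-v_i^{-1})\cdot i_0$, though your final expression $u_i=\pm(v_i-v_i^{-1})/(m-m^{-1})$ is the right one.
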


\begin{proof}

 For a peripheral arc $\beta_i$, the trace of $\rho(\beta_i)$ does not depend on the chosen preimage of $\beta_i$, as it is invariant under conjugation. This fact together with the commuting equation uniquely define the matrix $\Px_i^{\pm1}$ associated to a peripheral arc $\beta_i$.\end{proof}

\subsection{Normalizing Paths: Writing Them In Terms Of Nice Matrices}\label{normalizationofpaths}

Above, we always used natural labeling. But given a path $\alpha$, we will at times use numerical subscripts for arcs that occur in $\alpha$ to indicate their position in $\alpha$, therefore differing from the natural labeling. We will explicitly note when this is the case.

The next lemma allows us to rewrite any path on top of the knot, and indeed any Wirtinger generator as a product of matrices $\Mx$, $\Px_i^{\pm1}$, and $\Cx_i$ for various indices $i$. Specifically, any such path lifts to a series of paths on bananas and geodesic paths connecting bananas.  These are each conjugate to a specific $\Px_i^{\pm1}$ or $\Cx_i$.  The conjugation of two adjacent paths of this type to $\Px_i^{\pm1}$ and $\Cx_i$ can be simplified because we can cancel most of the terms in the conjugating matrices (since the paths from the bananas to the preferred banana for two adjacent paths of this type are almost the same).  The following lemma makes this explicit.

\begin{lemma}\label{lemma:conjuationbacktracking}
 Let $\tau = \tau_n \tau_{n-1} \dots \tau_1$ be a path in $M$ consisting of alternating peripheral and crossing arcs, with indices corresponding to the order of arcs in the path $\tau$. Fix banana $B_1$, and a point $z_2\in \partial \mathbb{H}$.  Let $B_2$ be a banana with preferred ideal point at $z_2$.
 Let $\tilde{\tau}$ be a preferred lift of $\tau$ with the initial point $\tilde{t}$ on  $B_1$. Assume that the transformation corresponding to $\rho(\tilde{\tau}_i)$ starts at $\tilde{t}_n$. Then for any $i$, we may choose the conjugate of $\rho(\tilde{\tau}_i)$ with the initial point on $B_1$ instead: if $\tau_i$ is a crossing arc, assume that the geodesic representative for  $\tilde{\tau}_i$ extends to $z_2$. This conjugate will be denoted by $T_i$. After choosing such conjugate for every $i$, the path $\tau$ is represented as 
  \[ \rho(\tilde{\tau}) =  \rho( \tilde{\tau}_n ) \rho(\tilde{ \tau}_{n-1}) \dots \rho(\tilde{ \tau}_1) = T_1 T_2 \dots T_n.\]

\end{lemma}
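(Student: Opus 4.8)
The plan is to track how the conjugating matrices appearing when each arc $\tau_i$ is individually normalized collapse telescopically as we read along the path $\tilde{\tau}$. First I would fix notation: write $\rho(\tilde\tau) = \rho(\tilde\tau_n)\cdots\rho(\tilde\tau_1)$ as in Proposition~\ref{PropFormofpathalongtop}, where $\rho(\tilde\tau_i)$ is the isometry carried by the $i$-th lifted arc with its actual initial point $\tilde t_i$ (the terminal point of the previous lifted arc) on some banana $B^{(i)}$, and where $B^{(1)}=B_1$. For each $i$ there is a conjugating element $g_i\in\PSL_2(\C)$ with $g_i(B_1)=B^{(i)}$ carrying the preferred banana to the one containing $\tilde t_i$ (and, when $\tau_i$ is a crossing arc, carrying the geodesic with ideal endpoint $z_2$ to the geodesic underlying $\tilde\tau_i$), so that $\rho(\tilde\tau_i) = g_i\, T_i\, g_i^{-1}$, where $T_i$ is the normalized conjugate whose initial point lies on $B_1$ and whose crossing geodesic (if applicable) ends at $z_2$. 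This $T_i$ is exactly the matrix guaranteed to be well-defined and unique by Lemma~\ref{lemma:Pmatrixunique} (for peripheral arcs) and Lemma~\ref{LemmaCrossinggeodesics} (for crossing arcs).

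Next I would establish the recursion $g_{i+1} = g_i\, T_i$. This is the geometric heart: the isometry $\rho(\tilde\tau_i)$ carries $\tilde t_i\in B^{(i)}$ to $\tilde t_{i+1}\in B^{(i+1)}$, hence carries $B^{(i)}$ to (a banana through) the relevant ideal point of $B^{(i+1)}$, so $\rho(\tilde\tau_i)\circ g_i$ is an element taking $B_1$ to $B^{(i+1)}$ with the correct normalization data. By the uniqueness of such normalizations (again Lemmas~\ref{lemma:Pmatrixunique} and~\ref{LemmaCrossinggeodesics}, together with the banana/ideal-point bookkeeping of Lemma~\ref{LemmaIdealpointsofbananas} which guarantees adjacent crossing arcs share an ideal point so the "preferred ideal point $z_2$" propagates consistently), we may take $g_{i+1}=\rho(\tilde\tau_i)g_i = g_i T_i g_i^{-1} g_i = g_i T_i$. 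Starting from $g_1 = \Id$ (since $B^{(1)}=B_1$ and $\tilde t_1=\tilde t$ already lies on $B_1$ with the chosen normalization), induction gives $g_i = T_1 T_2 \cdots T_{i-1}$.

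Finally I would assemble the product. Using $\rho(\tilde\tau_i) = g_i T_i g_i^{-1}$ and $g_{i+1}=g_iT_i$, we compute
\[
\rho(\tilde\tau) = \rho(\tilde\tau_n)\cdots\rho(\tilde\tau_1)
= (g_nT_ng_n^{-1})(g_{n-1}T_{n-1}g_{n-1}^{-1})\cdots(g_1T_1g_1^{-1}).
\]
In this product each $g_i^{-1}$ meets $g_{i+1}=g_iT_i$ on its right, and $g_i^{-1}g_{i+1} = T_i$; writing it out, all interior conjugators cancel and, since $g_1=\Id$ and $g_n^{-1}g_{n+1}$ would be $T_n$ with the convention $g_{n+1}=g_nT_n$, one is left with $\rho(\tilde\tau) = T_1 T_2 \cdots T_n$. (I would double-check the index order carefully here: because $\rho$ is an antihomomorphism on concatenation in the convention of equation~(\ref{eq2})—the composed isometry reads right-to-left—the telescoping reverses the order of the $T_i$, which is precisely why the statement has $T_1T_2\cdots T_n$ rather than $T_n\cdots T_1$.)

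\textbf{Main obstacle.} The delicate point is not the algebra but justifying that the recursion $g_{i+1}=g_iT_i$ is legitimate, i.e.\ that the "preferred ideal point $z_2$" and the banana-normalization data are propagated coherently from one arc to the next. This requires knowing that consecutive arcs in $\tilde\tau$ genuinely share the bananas/ideal points they are supposed to (a peripheral arc stays on one banana; a crossing arc's endpoints lie on two bananas that the neighboring arcs also meet), which is exactly what Lemma~\ref{LemmaIdealpointsofbananas} and the construction of $\tilde\tau$ as a genuine path in $H$ (endpoints of lifts chosen to coincide) provide. Once that coherence is in hand, the uniqueness statements in Lemmas~\ref{lemma:Pmatrixunique} and~\ref{LemmaCrossinggeodesics} force each $T_i$ to be the claimed normalized matrix, and the telescoping cancellation is automatic. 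I would also be careful that in the parabolic case the meridianal direction is fixed throughout (as in Lemma~\ref{lemma:extendtoperipheral}), so that each $T_i$ remains well-defined.
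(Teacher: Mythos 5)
Your argument is essentially the paper's: both identify the conjugator carrying $T_i$ to $\rho(\tilde{\tau}_i)$ as $g_i=\rho(\tilde{\tau}_{i-1})\cdots\rho(\tilde{\tau}_1)$, prove inductively that $g_i=T_1\cdots T_{i-1}$, and telescope to get $\rho(\tilde{\tau})=g_nT_n=T_1\cdots T_n$. One cosmetic slip: in the product as written, $g_i^{-1}$ meets $g_{i-1}$ (not $g_{i+1}$) on its right, giving $g_i^{-1}g_{i-1}=T_{i-1}^{-1}$, which then cancels against the adjacent $T_{i-1}$ — the telescoping still yields exactly $T_1\cdots T_n$, so the conclusion is unaffected.
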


\begin{proof}
 Let $t_i$ and $t_{i+1}$ denote the initial and terminal point of $\tau_i$ respectively, and let $\tilde{t_i}$ and $\tilde{t_{i+1}}$ be such points for $\tilde{\tau}_i$. 
If $n=1$, $\rho(\tilde{\tau})=\rho(\tilde{\tau_1})=T_1$ since the basepoint of $\tau$ is already on $B_1$, and no conjugation for $\tau_1$ is needed.  If $n=2$, then we have $\rho(\tilde{\tau})= \rho(\tilde{\tau}_2) \rho(\tilde{\tau}_1)$ where we are mindful that these are the representations corresponding to the lifts of $\tau_1$ at $\tilde{t}$ and of $\tau_2$ at $\tilde{t}_2$.   Then $\rho(\tilde{\tau}_1)=T_1$ and $\rho(\tilde{\tau}_1)^{-1} \rho(\tilde{\tau}_2)\rho(\tilde{\tau}_1) = T_2$.  (To check, this conjugate takes $\tilde{t}$ to $\tilde{t}_1$.)  Therefore
\[
\rho(\tilde{\tau})= \rho(\tilde{\tau}_2) \rho(\tilde{\tau}_1) = \rho(\tilde{\tau}_1)  T_2 \rho(\tilde{\tau}_1)^{-1} \rho(\tilde{\tau}_1) =   \rho(\tilde{\tau}_1)  T_2 =T_1T_2
\]
since $\rho(\tilde{\tau}_1)=T_1$.

Now we argue inductively.   We assume that
\[ \rho(\tau_n \tau_{n-1} \dots \tau_1) = \rho(\tilde{\tau}_n) \big(  \rho(\tilde{\tau}_{n-1}) \dots \rho(\tilde{\tau}_2) \rho(\tilde{\tau}_1) \big).\]
The transformation corresponding to $\rho(\tilde{\tau}_n)$ is the conjugate of $T_n$ that starts at $\tilde{t}_n$ so that we have
\[
\rho(\tilde{\tau}_n) = \big(  \rho(\tau_{n-1}) \dots \rho(\tilde{\tau}_1)        \big) T_n \big(    \rho(\tilde{\tau}_1)^{-1} \dots  \rho(\tilde{\tau}_{n-1})^{-1}                 \big).
\]
This can be checked by noting that $   \rho(\tilde{\tau}_1)^{-1} \dots  \rho(\tilde{\tau}_{n-1})^{-1} (\tilde{t}_{ n })=\tilde{t}_1$  since
$\rho(\tilde{\tau}_i)(\tilde{t}_i) = \tilde{t}_{i+1}$.  Therefore,
\begin{align*}
  \rho(\tau_n \tau_{n-1} \dots \tau_1)
  & = \big(  \rho(\tau_{n-1}) \dots \rho(\tilde{\tau}_1)        \big) T_n \big(    \rho(\tilde{\tau}_1)^{-1} \dots  \rho(\tilde{\tau}_{n-1})^{-1}                 \big) \\
  & \qquad \qquad  \qquad \qquad \qquad \qquad  \big(  \rho(\tilde{\tau}_{n-1}) \dots \rho(\tilde{\tau}_2) \rho(\tilde{\tau}_1) \big) \\
  & = \big(  \rho(\tau_{n-1}) \dots \rho(\tilde{\tau}_1)        \big) T_n.
  \end{align*}
Inductively, we have that $ \rho(\tau_{n-1}) \dots \rho(\tilde{\tau}_1) = T_1 \dots T_{n-1}$. Therefore, \[\rho(\tau_n \tau_{n-1} \dots \tau_1) =T_1 \dots T_n.\]
\end{proof}

\begin{remark}\label{WirtingerInMatrices}
By Lemma~\ref{lemma:conjuationbacktracking} and using the notation from Definition~\ref{definition:PxandCx}, for each path $\alpha$ along the top of the knot from the base point $b$ to the $k^{th}$ edge, we have the matrix
 $ \Ax= \Px_{i_1}^{e_1}  \Cx_{i_1}^{f_1}  \dots \Px_{i_k}^{e_k} \Cx_{i_k}^{f_k} $ that coincides with the path in reverse order. Here each of $\Px_{i_j}$ (or $\Cx_{i_j}$) is one of the $\Px_i$ (or $\Cx_i$) for $i\in\{1,2,\dots 2n\}$ according to the natural labeling. The exponent conventions reflect the direction of the peripheral arcs and allow for non-alternating diagrams, as discussed in subsection \ref{definingrepresentations}: each $e_i, i=1, ..., k,$ is $+1$ if the direction of the path agrees with the direction of $K$, and $-1$ otherwise. If the path goes across an overpass, that is,  from one edge to the next where both crossings are overpasses (without traversing the crossing arc) then $f_j=0$.  (See Figure~\ref{Passes1} (1) for a picture of this situation.)   Otherwise, $f_j=1$.
In the case where the path goes from one edge to the next by leaping over an overpassing arc (that is, it continues on the continuation of the previous edge) then we write this as $\Cx \Mx^{\pm 1} \Cx$ where the sign corresponds to the direction of the meridional crossing.  (See Figure~\ref{Passes1} (2) for a picture of this situation.)  With this,
$ \rho(w_i) = \Ax \Mx \Ax^{-1}$. \end{remark}

\begin{lemma}\label{independence}
 With the above notation, for a Wirtinger generator $w_i$, $\rho(w_i)$ does not depend on the choice of path on the top of the knot $\alpha$.  
\end{lemma}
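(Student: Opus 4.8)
The plan is to show that any two paths $\alpha, \alpha'$ on top of the knot from $b$ to the $k$th edge differ by a loop, and that the representation of that loop is the identity in the relevant sense, so that $\rho(w_i) = \Ax \Mx \Ax^{-1} = \Ax' \Mx (\Ax')^{-1}$. First I would fix the two paths $\alpha$ and $\alpha'$ and form the loop $\ell = (\alpha')^{-1}\alpha$ based at $b$; since both $\alpha$ and $\alpha'$ lie on top of the knot and terminate at the same basepoint $b_k$ on the $k$th edge, $\ell$ is a genuine loop in $M = S^3 - K$ based at $b$. The matrices attached to the two paths, via Lemma~\ref{lemma:conjuationbacktracking} and the bookkeeping of Remark~\ref{WirtingerInMatrices}, are $\Ax$ and $\Ax'$ respectively, and $\rho(\ell) = (\Ax')^{-1}\Ax$ by the multiplicativity of $\rho$ on concatenations of lifted arcs (which is exactly what Proposition~\ref{PropFormofpathalongtop} and Lemma~\ref{lemma:conjuationbacktracking} give us, once lifts are chosen compatibly).

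Next I would analyze what $\ell$ is in $\pi_1(M)$. A loop on top of the knot based at $b$, built from peripheral and crossing arcs, is null-homotopic in $M$ if and only if it bounds a disk lying above the projection plane; in general such a loop is homotopic to a product of loops associated to regions of $D$ (the $\alpha_S$ of the earlier discussion) together with peripheral loops on $T$, i.e. a product of a peripheral loop (a power of $\mu$ and/or $\lambda$) and loops bounding regions. The key point is that both $\alpha$ and $\alpha'$ end at the \emph{same} point $b_k$ on the torus with the same framing, so $\ell$, as an element of $\pi_1(M)$, actually lies in the image of $\pi_1(T) \to \pi_1(M)$ composed with the region loops — more precisely, $\ell$ is a loop that can be slid on top of the knot, hence is trivial when we think of it in $\pi_1$ of the part of $M$ above the plane, and this is a free group with the region loops and peripheral loops as a generating set. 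Therefore $\rho(\ell)$ is a product of the images under $\rho$ of region loops $\rho(\alpha_S)$ and of a peripheral element.

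Then I would invoke the fact that $\rho(\alpha_S) = I$ for every region loop $\alpha_S$. This is the genuine content: a region loop $\alpha_S$ is homotopically trivial in $M$ (it bounds the region disk $S$, which sits in the projection plane), so $\rho(\alpha_S) = \rho(1) = I$; the point of the earlier lemmas (Lemma~\ref{LemmaDegeneracy1} and the surrounding geometric setup) is precisely that the lift of $\alpha_S$ closes up, which for a genuine geometric representation it must since $\alpha_S$ is a trivial loop. For the peripheral part: if $\ell$ contains a nontrivial peripheral loop, that would change the terminal banana of the lift, contradicting that $\alpha$ and $\alpha'$ end at the same lift of $b_k$ once we fix the initial lift of $b$; so the peripheral contribution is also trivial, or if one prefers, one absorbs it and notes it conjugates $\Mx$ trivially. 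Combining, $\rho(\ell) = I$, hence $\Ax' = \Ax$ up to sign in $\PSL_2(\C)$, and certainly $\rho(w_i) = \Ax\Mx\Ax^{-1} = \Ax'\Mx(\Ax')^{-1}$ is independent of the chosen path.

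The main obstacle I anticipate is making rigorous the claim that a loop on top of the knot decomposes as a product of region loops (and controlling the peripheral contribution), i.e. identifying $\pi_1$ of the region-above-the-plane complement and checking that the generators one gets really are the $\alpha_S$'s, oriented and based correctly so that the matrix bookkeeping of Remark~\ref{WirtingerInMatrices} is consistent. A clean way around the subtlety is to reduce to elementary moves: any two paths on top of the knot between the same endpoints are related by a finite sequence of moves, each of which either pushes the path across a single region $S$ of $D$ (changing $\Ax$ by inserting $\rho(\alpha_S) = I$) or slides a peripheral arc $\beta_{i,L} \leftrightarrow \mu^{g_i}\beta_{i,R}$ across a thickened edge (changing $\Ax$ by the relation~\eqref{EquationEdges1}, which is built into the convention and does not affect $\rho(w_i) = \Ax\Mx\Ax^{-1}$ since the meridional factor commutes appropriately or is reabsorbed). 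Checking that this finite list of moves generates all homotopies rel endpoints of top-of-knot paths, and that each move is matched by the matrix identities already established, is where the real care is needed; everything else is formal.
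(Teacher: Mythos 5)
Your route is genuinely different from the paper's. The paper's proof is two lines: it asserts that any two admissible choices of $\alpha$ are homotopic rel endpoints, and then invokes the uniqueness of the matrices $\Px_i$ and $\Cx_i$ attached to each arc (Lemmas~\ref{lemma:Pmatrixunique} and~\ref{LemmaCrossinggeodesics}), so that $\rho$ --- which here is already a bona fide representation of $\pi_1(M)$, extended to arcs --- sends homotopic paths to the same element. You instead try to establish path-independence algebraically, by decomposing the difference loop $\ell=(\alpha')^{-1}\alpha$ into region loops and peripheral loops and killing each factor via the region equations. That is essentially the argument the paper needs for the \emph{converse} direction (Proposition~\ref{prop:backwards}), where no representation exists yet and the region equations must do the work; in the setting of this lemma, once one knows $\ell$ is null-homotopic, $\rho(\ell)=I$ is automatic, so the region-equation machinery is aimed at the wrong target, and the only real content is (a) the homotopy of the paths and (b) that the matrix product $\Ax$ of Lemma~\ref{lemma:conjuationbacktracking} computes $\rho(\alpha)$.

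The genuine gap is your decomposition claim, which is not only unproven but false as stated. A loop on top of the knot based at $b$ need not be a product of region loops and peripheral loops at $b$: the paper explicitly allows a top-of-knot path to make a meridianal detour $\gamma\mu_k\gamma$ around another overpass (Figure~\ref{MeridianPath}), and such a detour is a conjugate of the meridian of the $k$-th overpass --- neither null-homotopic nor peripheral at $b$, and its image under $\rho$ does not commute with $\Mx$ for an irreducible representation. If two admissible paths were permitted to differ by such a detour, $\rho(w_i)$ would genuinely change; so your argument, taken literally, would ``prove'' a false statement. The conclusion holds only because the admissible $\alpha$'s are all homotopic rel endpoints (equivalently, $\ell$ is null-homotopic), which is exactly the assertion the paper makes and which your decomposition was supposed to supply. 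Your elementary-moves repair has the same hole: region-pushes and left/right slides do not account for inserting or deleting meridian detours, and excluding (or cancelling) those is where the content of the lemma lives. The clean fix is to pin down the class of allowed paths so that their homotopy rel endpoints is evident, note that then $\rho(\ell)=I$ for free, and reduce to the homotopy-invariance of the matrix bookkeeping --- which is the paper's (admittedly terse) proof.
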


\begin{proof}

The independence of $\alpha$ follows from the uniqueness of $\Px_i$ and $\Cx_i$ associated to every arc, and the fact that a different choice of $\alpha$ will result in a homotopic path.
\end{proof}


\subsection{Simple Relations from the Necessary Conditions}\label{simplerelations}

A  Wirtinger generator $w_i$ is homotopic rel endpoints to a loop of the form $\alpha^{-1} \mu_i \alpha$ where $\alpha=\gamma_{n} \beta_n \dots \gamma_1\beta_1$ is a path on top of the knot consisting of peripheral and crossing arcs (suppressing exponents). Here the indices correspond to the order of arcs in $\alpha$.

Recall that by construction (see Equation~\ref{EquationEdges1}), the ``left'' and ``right'' peripheral arcs $\beta_{i,R}$ and $\beta_{i,L}$ for an edge $E_i$ are related by  $\beta_{i,L} = \mu^{g_i} \beta_{i,R}$. Here the edge $E_i$ inherits the orientation of $K$, and $g_i$ satisfies \ref{EquationEdgesExp}  according to this orientation.
This  implies that for a fixed lift, including a lift with base point at $B_1$ we have $\tilde{\beta}_{i,L} = \widetilde{  \mu^{g_i} \beta_{i,R}}$ so that $\rho( \tilde{\beta}_{i,L} ) =  \rho( \widetilde{  \mu^{g_i} \beta_{i,R}})= \rho(\tilde{\mu}^{g_i}) \rho(\tilde{\beta}_{i,R})$ by Proposition \ref{PropFormofpathalongtop}. By Lemma~\ref{lemma:conjuationbacktracking}, $\Px_{i,L} = \Px_{i,R} \Mx^{g_i}$. Since  $\Px_{i,R}$ and $\Mx$ are transformations corresponding to the lifts of peripheral arcs on the same banana, they satisfy commuting equation (Definition \ref{def:PandC}), and $\Px_{i,L}=\Mx^{g_i}\Px_{i,R}$.

 Let $\alpha$ be a loop that goes around a region in $D$, the taut diagram of our knot.  Such a loop $\alpha$ is null-homotopic. Then $\alpha$ is homotopic to  a  loop of the form $\gamma_{n} \beta_n \dots \gamma_1\beta_1$  where the $\beta_i$ are peripheral arcs and the $\gamma_i$ are crossing arcs indexed according to their order in $\alpha$.  By Proposition~\ref{PropFormofpathalongtop} and Lemma~\ref{lemma:conjuationbacktracking}, up to conjugation we have 
\[ \rho(\alpha) = \Ax= \Px_1^{\pm1}\Cx_1 \dots \Px_n^{\pm1} \Cx_n. \]
Since $\alpha$ is homotopically trivial,  $\rho(\alpha) = I$.

 \begin{definition}\label{eregiondgeequations1} We call a matrix relation of the form    
$\Px_{i,L}  = \Mx^{g_i}\Px_{i,R}$ an  \underline{edge} \underline{equation}. 
 \end{definition}

 \begin{definition}\label{eregiondgeequations2} Let $R$ be any region in $D$ and $\alpha$ a path around $R$. Suppose $\rho(\alpha)$ is conjugate to $\Ax=\Px_{i_1}^{e_1}  \Cx_{i_1}^{f_1}  \dots \Px_{i_k}^{e_k} \Cx_{i_k}^{f_k}$ as in Remark \ref{WirtingerInMatrices}, where $\Px_i=\Px_{i, L}$ if that is the arc in the interior of the region $R$, and $\Px_i=\Px_{i, R}$ otherwise, and the exponents are chosen according to Remark \ref{orientation}. We call the resulting matrix relation $\Px_{i_1}^{e_1}  \Cx_{i_1}^{f_1}  \dots \Px_{i_k}^{e_k} \Cx_{i_k}^{f_k}=I$ the \underline{region equation} for $R$.

\end{definition}

With this notation, edge and region equations are satisfied for all geometric representations due to the geometric observations and definitions above.

For a given region, there are many equivalent ways of writing the region equation by cyclically permuting the starting point, or traversing a given region clockwise or counterclockwise.  All are algebraically equivalent.

\section{The Other Direction: Normalized Matrices and Relations Define Representations}\label{section:wirtinger}

We now show that if we simple-mindedly define $\Mx$, $\Px_i$, and $\Cx_i$ matrices to correspond to the meridian, peripheral paths, and crossing paths that so long as they satisfy the edge, region, and commuting conditions, this determines a representation.

\begin{prop}\label{prop:backwards}
Let $K$ be a knot with an oriented taut diagram $D$. Assign a meridian matrix $\Mx$ to the meridian, an edge matrix to each oriented peripheral arc (i.e. to each side of every edge), and a crossing matrix  to each crossing arc in $D$ so that the commuting, edge, and region equations are satisfied.  For each Wirtinger generator $w_n$ with basepoint $b_n$, let $\alpha_n$ be a path on top of the knot from $b$ to $b_n$ of the form $\alpha_n = \gamma_n \beta_n \dots \beta_1 \gamma_1$, with indices corresponding to the order of arcs in $\alpha$ (where in the situation depicted in Fig.\ref{TwoOverpasses} we take the associated $\gamma_i$ to be the identity).   Then  setting  $\rho(w_n)=\Ax_n \Mx\Ax_n^{-1}$, where $\Ax_i=\Px_1^{\pm1}\Cx_1 \cdots \Px^{\pm1}_{n} \Cx_n$, with $\Px_i^{\pm1}$ corresponding to $\beta_i$, and $\Cx_i$ corresponding to $\gamma_i$, defines a representation $\rho$ of $\pi_1(M)$ to $\PSL_2(\C)$. \end{prop}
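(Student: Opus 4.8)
The statement asserts that assigning matrices satisfying the commuting, edge, and region equations yields a well-defined representation of the knot group. Since $\pi_1(M)$ admits a Wirtinger presentation with generators $w_1,\dots,w_{2n}$ (one per edge, with one relation per crossing, and one relation redundant), the only thing to verify is that the assignment $w_n \mapsto \Ax_n \Mx \Ax_n^{-1}$ is (a) independent of the choice of path $\alpha_n$ along the top of the knot, and (b) respects the Wirtinger relations at each crossing. Everything else (that $\Ax_n \Mx \Ax_n^{-1} \in \PSL_2(\C)$, that it is conjugate to $\Mx$, etc.) is automatic.

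\textbf{Step 1: Well-definedness of $\rho(w_n)$.} First I would show $\Ax_n \Mx \Ax_n^{-1}$ does not depend on the chosen path $\alpha_n$ from $b$ to $b_n$ along the top of the knot. Two such paths differ by a loop on top of the knot, and any such loop is a product of loops each of which bounds a region of $D$ (the regions of the diagram generate $H_2$ of the projection sphere / give a generating set of relations among the top-of-knot loops). Hence it suffices to observe that inserting a region loop changes $\Ax_n$ on the right by the matrix $\Px_{i_1}^{e_1}\Cx_{i_1}^{f_1}\cdots\Px_{i_k}^{e_k}\Cx_{i_k}^{f_k}$, which equals $I$ by the region equation. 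One must also use the edge equations to handle the two choices $\beta_{i,L}$ versus $\beta_{i,R}$ for a given edge when passing from one side of the knot to the other: the edge equation $\Px_{i,L}=\Mx^{g_i}\Px_{i,R}$ is precisely the relation making the conjugating product path-independent across an edge. This is essentially the algebraic shadow of Lemma~\ref{independence}, and should be written as a formal consequence of the region and edge equations alone (not of any geometry, since we are now in the ``backwards'' direction).

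\textbf{Step 2: The Wirtinger relations.} At a crossing where the overpassing edge is $E_k$ and the two underpassing edges are $E_i, E_j$, the Wirtinger relation reads $w_j = w_k^{\pm 1} w_i w_k^{\mp 1}$ (sign depending on crossing sign). I would verify this by choosing compatible paths: take $\alpha_i$ a path to $E_i$, extend it across the crossing arc $\gamma$ (with a meridional detour $\Mx^{\pm 1}$ around $E_k$) to get $\alpha_j$, so that $\Ax_j = \Ax_i \,\Px_i^{\pm 1}\Cx\,\Mx^{\pm1}\,\Cx$ or a similar local expression, and separately $\Ax_k$ relates to $\Ax_i$ by a single crossing arc $\Cx$. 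Then the required identity $\Ax_j\Mx\Ax_j^{-1} = (\Ax_k\Mx\Ax_k^{-1})^{\pm1}(\Ax_i\Mx\Ax_i^{-1})(\Ax_k\Mx\Ax_k^{-1})^{\mp1}$ reduces, after cancelling the common prefix $\Ax_i$, to a purely local matrix identity involving only $\Mx$, one $\Cx$, one $\Px_i^{\pm1}$, and the relevant edge relation — and this local identity is exactly a region-type relation for the (bigon or triangular) region adjacent to the crossing, hence follows from the region and edge equations. I would also use Remark~\ref{altW}, that $\Cx^2 = \pm I$, to simplify the meridional detour terms $\Cx\Mx^{\pm1}\Cx$.

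\textbf{Main obstacle.} The genuine work is bookkeeping: matching the \emph{local} combinatorial picture at a crossing (which of $E_i,E_j$ is to the left, the crossing sign, the $g_i$ exponents, which peripheral arc lies in the interior of the adjoining region) with the \emph{global} path expressions $\Ax_n$, so that the Wirtinger relation at each crossing is seen to be implied by the region equations one already has. The subtle point is that the Wirtinger relations and the region equations are two different generating sets of relations for $\pi_1$(top of the knot) $\cong \pi_1(M)$; I must check that every Wirtinger relation lies in the normal closure of the region plus edge relations. Concretely I expect to argue that the small region abutting each crossing gives a region equation that is equivalent, after conjugation by $\Ax_i$ and use of $\Cx^2=\pm I$ and the edge equation, to exactly that crossing's Wirtinger relation — so the correspondence is one-to-one and no relation is missed. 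Once this local equivalence is established at every crossing, together with Step 1, the map on generators extends to a homomorphism $\rho\colon\pi_1(M)\to\PSL_2(\C)$.
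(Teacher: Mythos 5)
Your overall architecture is the same as the paper's: check well-definedness (path-independence of $\Ax_n$) and then verify the Wirtinger relation at each crossing by choosing compatible paths and cancelling the common prefix $\Ax_i$. Step 1 is fine and is in fact more explicit than the paper's one-sentence treatment. However, Step 2 contains a genuine misstep in where you locate the needed identity. After cancelling $\Ax_i$ and using $\Cx^2=\pm I$, the Wirtinger relation at a crossing reduces to the local identity
\[
\Cx_i\,\Mx\,\Cx_i\,\Px_i^{-1}\,\Mx \;=\; \Cx_i\,\Mx\,\Cx_i\,\Mx\,\Px_i^{-1},
\]
which is equivalent to $\Px_i\Mx=\Mx\Px_i$, i.e.\ to the \emph{commuting} equation of Definition~\ref{def:PandC}. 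It is not ``exactly a region-type relation for the (bigon or triangular) region adjacent to the crossing,'' and it does not follow from the region and edge equations: a region equation involves all the edge and crossing matrices around an entire region of $D$ (which need not be a bigon or triangle), whereas this identity involves only the single $\Px_i$, the single $\Cx_i$, and $\Mx$ at the crossing in question. Your ``main obstacle'' paragraph compounds this by predicting a one-to-one correspondence between region equations and Wirtinger relations; no such correspondence exists (an $n$-crossing diagram has $n+2$ regions and $n$ Wirtinger relations), and the two families of equations play different roles --- region plus edge equations give path-independence, while the commuting equation gives the crossing relations. Since the commuting equation is among your hypotheses, the proof is repairable by simply invoking it at this point, but as written your plan for Step 2 would send you looking for the identity in the wrong place and would not close.

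The paper's proof does exactly the local computation above (for both crossing orientations), taking $\Ax_j=\Ax_i\Px_i\Cx_i$ and $\Ax_{i+1}=\Ax_i\Px_i\Cx_i\Mx\Cx_i$, and concludes with ``these are equal since $\Mx$ and $\Px_i$ commute.'' You should replace your appeal to region equations in Step 2 with that single use of the commuting equation.
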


 \begin{proof}
It is enough to show that $\rho$  is well-defined, and satisfies the  Wirtinger relations.     The region and edge equations and Proposition \ref{PropFormofpathalongtop} imply that  for any path $\alpha_n$ as above, the matrix for $\rho(\alpha_n)$ depends only on the initial and terminal point of $\alpha_n$, and is well-defined depending only on the homotopy class of $\alpha_n$.

We now prove that Wirtinger relations hold.  Depending on the orientation of the knot $K$, there are two cases.  So consider one of the cases: Figure \ref{ij}, left, shows a labeling of edges of a crossing in a knot diagram, with orientation. Fix a basepoint $b$ in $S^3-K$. Let $w_m$ be a Wirtinger generator starting at $b$, and wrapped around the edge $m$ on the figure. We need to show that $\rho(w_j)\rho(w_i)=\rho(w_{i+1})\rho(w_j)$. Denote by $\Ax_m$ the path on top of the knot from $b$ to the edge labelled $m$, where $m=i, i+1$ or $j$.

We have $\rho(w_i) = \Ax_i  \Mx \Ax_i^{-1}$, $\rho(w_j) = \Ax_j\Mx \Ax_j^{-1} $, and $\rho(w_{i+1})=\Ax_{i+1}\Mx \Ax_{i+1}^{-1} $.  We can take $\Ax_j= \Ax_i\Px_i \Cx_i$ and $\Ax_{i+1}=\Ax_i\Px_i  \Cx_i \Mx \Cx_i$, and, without loss of generality, we may assume that the exponent of $\Px_i$ here is positive. Therefore,
\begin{align*}
 \rho(w_j)\rho(w_i) & = \big(( \Ax_i\Px_i \Cx_i) \Mx ( \Ax_i\Px_i \Cx_i)^{-1}      \big) \big(  \Ax_i  \Mx \Ax_i^{-1}   \big) \\
 & =  \Ax_i \Px_i \Cx_i \Mx \Cx_i \Px_i^{-1} \Mx \Ax_i^{-1}  \\
 \rho(w_{i+1})\rho(w_j)  & =\big( (\Ax_i\Px_i  \Cx_i \Mx \Cx_i ) \Mx (\Ax_i\Px_i  \Cx_i \Mx \Cx_i )^{-1} \big)  \big(( \Ax_i\Px_i \Cx_i) \Mx ( \Ax_i\Px_i \Cx_i)^{-1}      \big)      \\
&  = \Ax_i \Px_i \Cx_i \Mx \Cx_i \Mx  \Px_i^{-1} \Ax_i^{-1} .
\end{align*}
These are equal since $\Mx$ and $\Px_i$ commute, and hence the Wirtinger relation holds.

The proof for the other case, with different orientation of the link at the crossing (as in Figure \ref{ij}, right) is similar.
\end{proof}

 \begin{figure}[H]
\centering
\includegraphics[scale=0.7]{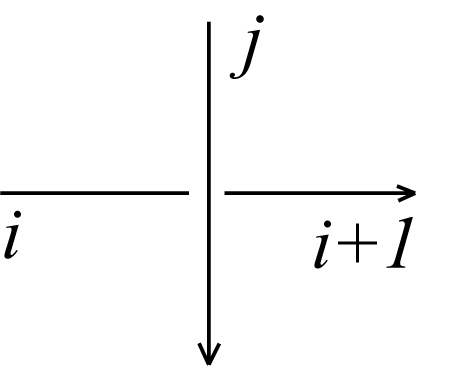} \hspace{1in}
\includegraphics[scale=0.7]{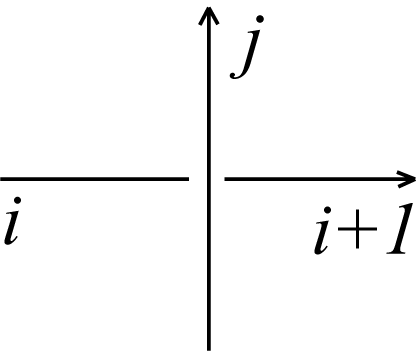}

\caption{Two cases for the orientation of the crossing.}
\label{ij}

\end{figure}

We have now proven the main result of our paper.

\begin{thm}\label{ThmMain1}
Let $K$ be an oriented knot with a taut diagram $D$, and $\rho$ be a geometric representation of $\pi_1(S^3-K)$ to $\text{PSL}_2(\C)$. Then up to conjugation, $\rho$ determines the meridian matrix $\Mx$ (as in Definition \ref{definition:meridian}), and edge and crossing matrices
$\Cx_i$, and $\Px_i$ (as in Definition \ref{definition:PxandCx}) for the preferred meridian, crossing arcs, and oriented peripheral arcs. These matrices are unique given the meridianal direction unless $\mu$ lifts to an order 2 elliptic element. Each $\Px_i$ satisfies the commuting equation, and the region and edge equations are satisfied.

Conversely,  given matrices $\Mx$, $\Cx_i$, and $\Px_i$ as above, satisfying the region equations, edge equations, and commuting equations, the following holds. By defining $ \rho(w_n) = \Ax_n \Mx \Ax_n^{-1} $, where $\Ax_n$ corresponds to a path $\alpha_n$ along the top of the knot, we determine a representation of $\pi_1(S^3-K)$ to $\mathrm{PSL}_2(\C)$. Unless $\mathrm{trace}(\Mx)=0$, the representation $\rho$ is unique up to conjugation.

\end{thm}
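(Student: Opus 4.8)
\textbf{Proof plan for Theorem \ref{ThmMain1}.}

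The statement is essentially a summary of the results established in Sections \ref{GeometricSetup}--\ref{section:wirtinger}, so the plan is to assemble those pieces rather than prove anything new. For the forward direction, I would start from a geometric representation $\rho$ and invoke Lemma~\ref{lemma:meridian} to conjugate so that $\rho(\mu)=\Mx$ has the normalized upper-triangular form, with the sign/argument conventions pinning down $m$; this uses two of the three degrees of freedom from triple transitivity of $\PSL_2(\C)$, and the remaining degree is fixed by Proposition~\ref{PropositionConjugatingmeridianandwirt}, which also explains the exceptional case $m=\pm i$ where $\mu$ lifts to an order $2$ elliptic and uniqueness fails (cf.\ Remark~\ref{RemarkNonuniqueness}). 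Then for each oriented peripheral arc $\beta_i$, Lemma~\ref{lemma:extendtoperipheral} says $\rho$ extends to a preferred lift $\tilde\beta_i$ (given the meridianal direction in the parabolic case), and conjugating the resulting element onto the preferred banana $B_1$ produces the upper-triangular $\Px_i$ of Definition~\ref{definition:PxandCx}; Lemma~\ref{lemma:Pmatrixunique} gives uniqueness of $\Px_i^{\pm1}$ once the commuting equation is imposed. Similarly, Proposition~\ref{Thm:crossingarc}(1) extends $\rho$ to every lift $\tilde\gamma_i$ of a crossing arc, and Lemma~\ref{LemmaCrossinggeodesics} (using tautness through Proposition~\ref{Thm:crossingarc}(2)) shows the associated element is conjugate to a unique $\Cx_i=(\pm)\left(\begin{smallmatrix}0&c_i\\-c_i^{-1}&0\end{smallmatrix}\right)$ given the meridianal direction. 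Finally, the commuting equations hold because $\tilde\beta_i$ and $\tilde\mu$ lie on the same banana (Definition~\ref{def:PandC}); the edge equations hold by the relation $\beta_{i,L}=\mu^{g_i}\beta_{i,R}$ of \eqref{EquationEdges1} pushed through Proposition~\ref{PropFormofpathalongtop} and Lemma~\ref{lemma:conjuationbacktracking} (Definition~\ref{eregiondgeequations1}); and the region equations hold because a loop around a region is null-homotopic, so $\rho(\alpha)=I$, and $\rho(\alpha)$ is conjugate to the prescribed product of $\Px$'s and $\Cx$'s by Proposition~\ref{PropFormofpathalongtop} and Lemma~\ref{lemma:conjuationbacktracking} (Definition~\ref{eregiondgeequations2}).

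For the converse direction, the content is already contained in Proposition~\ref{prop:backwards}: given matrices $\Mx$, $\Px_i$, $\Cx_i$ satisfying the commuting, edge, and region equations, the assignment $\rho(w_n)=\Ax_n\Mx\Ax_n^{-1}$ is well-defined because the region and edge equations force $\rho(\alpha_n)$ to depend only on the homotopy class of the path $\alpha_n$ along the top of the knot (via Proposition~\ref{PropFormofpathalongtop}), and the Wirtinger relations hold by the explicit matrix computation in the proof of Proposition~\ref{prop:backwards}, where the key cancellation uses that $\Mx$ and $\Px_i$ commute. So here I would simply cite Proposition~\ref{prop:backwards} and then address the uniqueness-up-to-conjugation claim: given $\mathrm{trace}(\Mx)\neq 0$, the normal form of $\Mx$ together with the normalization of $\Cx_i$ for a crossing arc adjacent to the basepoint (Proposition~\ref{PropositionConjugatingmeridianandwirt}) determines the conjugacy-class representative, and any two collections of normalized matrices defining the same character are then conjugate by the same triple-transitivity argument as in the forward direction.

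The main obstacle, such as it is, is bookkeeping rather than mathematics: one must be careful to state the uniqueness clauses consistently with Proposition~\ref{PropositionConjugatingmeridianandwirt} and Remark~\ref{RemarkNonuniqueness}, namely that the obstruction to uniqueness in the forward direction is exactly $m=\pm i$ (equivalently $\mu$ lifting to an order $2$ elliptic, equivalently $\mathrm{trace}(\Mx)=0$), and that in this case one still obtains a representation but the conjugacy-class representative is not pinned down. I would also make explicit that the commuting equation is vacuous in the parabolic case (Remark~\ref{remark:commutingsign}), so in that case uniqueness requires the additional choice of meridianal direction, consistent with Lemmas~\ref{lemma:extendtoperipheral} and~\ref{LemmaCrossinggeodesics}. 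With those caveats recorded, the theorem follows immediately by combining the cited results.
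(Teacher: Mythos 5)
Your proposal is correct and matches the paper's approach exactly: the paper introduces Theorem~\ref{ThmMain1} with ``We have now proven the main result of our paper,'' i.e.\ it treats the theorem as the assembly of Lemmas~\ref{lemma:meridian}, \ref{lemma:extendtoperipheral}, \ref{LemmaCrossinggeodesics}, \ref{lemma:Pmatrixunique}, \ref{lemma:conjuationbacktracking}, Propositions~\ref{Thm:crossingarc}, \ref{PropositionConjugatingmeridianandwirt}, \ref{PropFormofpathalongtop}, \ref{prop:backwards}, and the discussion in Section~\ref{simplerelations}, which is precisely the assembly you describe. Your added bookkeeping on the $m=\pm i$ / $\mathrm{trace}(\Mx)=0$ exceptional case and the vacuity of the commuting equation in the parabolic case is consistent with Remarks~\ref{RemarkNonuniqueness} and~\ref{remark:commutingsign}.
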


\begin{cor}\label{corollary:geometric}
 Assume that $S^3-K$ is hyperbolic. Given matrices $\Mx$, $\Cx_i$, and $\Px_i$  satisfying the region equations, edge equations, and commuting equations, the set of all  $\rho(w_n)$   contains all representations up to conjugation in a canonical component of the $\PSL_2(\C)$ representation variety for $\pi_1(S^3-K)$.  
\end{cor}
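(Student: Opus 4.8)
\textbf{Proof strategy for Corollary~\ref{corollary:geometric}.}
The plan is to deduce the corollary from Theorem~\ref{ThmMain1} together with the Zariski-density discussion of Subsection~\ref{section:Zariski}. First I would recall that since $S^3-K$ is hyperbolic, a canonical component $Y_0$ of the $\PSL_2(\C)$ representation variety is a curve containing a discrete and faithful representation $\rho_0$, and that by Thurston's hyperbolic Dehn surgery theorem the geometric representations arising from Dehn fillings accumulate at $\rho_0$ and hence form a Zariski-dense subset of $Y_0$. So it suffices to show two things: (i) every geometric representation $\rho$ whose class lies on $Y_0$ appears, up to conjugation, among the $\rho(w_n)$ produced by some admissible choice of matrices $\Mx$, $\Cx_i$, $\Px_i$; and (ii) conversely, every tuple of matrices satisfying the commuting, edge, and region equations yields a representation (this is exactly the converse half of Theorem~\ref{ThmMain1}, via Proposition~\ref{prop:backwards}).

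For step (i), I would invoke the forward direction of Theorem~\ref{ThmMain1}: any geometric representation $\rho$, up to conjugation, determines a meridian matrix $\Mx$ and edge/crossing matrices $\Px_i$, $\Cx_i$ satisfying precisely the commuting, edge, and region equations, and these recover $\rho$ on all Wirtinger generators via $\rho(w_n)=\Ax_n\Mx\Ax_n^{-1}$. Thus the set of representations arising from solutions of the equation system contains \emph{all} geometric representations on $Y_0$. Combining with the Zariski-density statement, this set contains a Zariski-dense subset of $Y_0$. The final step is to observe that the set of $\rho$ obtained from solutions of the equation system is itself a constructible (indeed closed, being cut out by polynomial equations in the matrix entries and then projected) algebraic subset of the representation variety; since it contains a Zariski-dense subset of the irreducible curve $Y_0$, and is closed, it must contain all of $Y_0$. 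Hence every representation on the canonical component, up to conjugation, is among the $\rho(w_n)$.

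I expect the main subtlety to be bookkeeping about conjugation and the non-uniqueness when $\mathrm{trace}(\Mx)=0$: one must phrase the conclusion as ``up to conjugation in a canonical component'' so that the finitely many exceptional traceless characters (noted in Remark~\ref{RemarkNonuniqueness}) do not obstruct the density argument — they form a proper, hence Zariski-closed, subset of the curve and can be absorbed. A second point requiring care is that ``the set of all $\rho(w_n)$'' in the corollary's statement should be read as the union over all admissible matrix tuples of the resulting representations; once this is made precise, the containment of a Zariski-dense subset of $Y_0$ forces containment of $Y_0$ because $Y_0$ is an irreducible one-dimensional variety. No genuinely new computation is needed beyond Theorem~\ref{ThmMain1} and the Dehn-filling density input.
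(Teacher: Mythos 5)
Your proposal is correct and follows essentially the same route as the paper: both deduce the corollary from Theorem~\ref{ThmMain1} plus the Zariski-density of Dehn-filling (geometric) representations on the one-dimensional canonical component, as set up in Subsection~\ref{section:Zariski}. Your write-up is somewhat more explicit than the paper's about why a closed algebraic set containing a Zariski-dense subset of the irreducible curve must contain the whole curve, but no new idea is involved.
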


\begin{proof} 
If the knot complement is hyperbolic, then all but finitely many Dehn fillings are hyperbolic and these correspond to geometric representations (see Section~\ref{section:Zariski}). Therefore, since there are infinitely many such fillings, and the complex dimension of the canonical component of the character variety is one, Theorem~\ref{ThmMain1} determines (at least) the canonical component of the character variety. 
\end{proof}

 \section{General Algorithm to Determine  Geometric Representations}\label{section:algorithm}

 Following Theorem~\ref{ThmMain1}, we explicitly state the algorithm that gives equations for components of the representation variety (up to conjugation), including the canonical component.  Assume that $D$ is an oriented taut diagram for a knot $K$, with a base point $b$ on the knot.

\begin{alg}\label{algorithm:main}

\begin{enumerate}

\

 \noindent\makebox[\linewidth]{\rule{5.4in}{0.4pt}}
\item[] \textbf{Step 1. Labelling the knot diagram}
\vspace{-0.1in}

\noindent\makebox[\linewidth]{\rule{5.4in}{0.4pt}}

\noindent {\bf 1a)}  Label the meridian based at $b$ with $\Mx=(\pm) \mat{m}{1}{0}{m^{-1}}$.

\noindent {\bf 1b)} Orient all edges compatibly with the orientation of $K$, and label according to the natural labeling. For an oriented edge $E_i$, label each peripheral arc, left and right, with the matrices $\Px_{i, L}=  (\pm) \mat{v_{i, L}}{u_{i, L}}{0}{v_{i, L}^{-1}}$ and $\Px_{i, R}=  (\pm) \mat{v_{i, R}}{u_{i, R}}{0}{v_{i, R}^{-1}}$ respectively.

\noindent {\bf 1c)} Label each crossing with the matrix $\Cx_j = (\pm) \mat{0}{c_j}{-c_j^{-1}}{0}$ with indices corresponding to the natural labeling.

 \noindent\makebox[\linewidth]{\rule{5.4in}{0.4pt}}

\item[] {\bf Step 2. Writing down the equations}

\vspace{-0.1in}

\noindent\makebox[\linewidth]{\rule{5.4in}{0.4pt}}

\noindent {\bf 2a)} For each edge matrix $\Px_i$, the commuting equation  from Definition~\ref{def:PandC}) holds. (In practice, we assign a matrix in $\SL_2(\C)$ for each edge, as opposed to a coset in $\PSL_2(\C)$.)

\noindent {\bf 2b)} For each peripheral arc, the edge equation holds as in Definition \ref{eregiondgeequations1}: $ \Px_{i, L} = (\pm) \Mx^{g_i} \Px_{i, R}.$ (See  Equation~\ref{EquationEdgesExp} for the conventions defining $g_i$.) 
This is equivalent to
$ v_{i, L}v_{i, R}^{-1}=m^{g_i}, \ \mathrm{ and } \  u_{i, L}v_{i, R}-u_{i, R}v_{i, L}=g_i.$

\noindent {\bf 2c)} For every region of $D$, the region equation holds as in Definition \ref{eregiondgeequations2}.

 \noindent\makebox[\linewidth]{\rule{5.4in}{0.4pt}}

\item[] {\bf Step 3. Defining Wirtinger Generators}

\vspace{-0.1in}

\noindent\makebox[\linewidth]{\rule{5.4in}{0.4pt}}

\noindent {\bf 3)} Let $\alpha_i$ be a path along the top of the knot from $b$ to the $i^{th}$ peripheral arc such that an associated Wirtinger generator is $w_i=\alpha_i m \alpha_i^{-1}$. Associate to $\alpha_i$ a matrix  $\Ax_i$ as in Remark \ref{WirtingerInMatrices}. Then 
$\rho(w_i) =   \Ax_i  \Mx \Ax_i^{-1} .$

\end{enumerate}
\end{alg}

\begin{remark}\label{shortcuts} Shortcuts. \begin{enumerate} 
\item Steps 1 and 2 already produce the equations that determine the canonical component of the representation variety by  Corollary~\ref{corollary:geometric}. It is however often useful to have explicit matrices for Wirtinger generators for a representation, and this is achieved in Step 3.
\item  To reduce the number of matrix labels, one can choose to label only one side of each edge, either left or right, in Step 1b. The label for the other side is then easily determined by the edge equation (Definition \ref{eregiondgeequations1}). One practical way to do it is to color the regions of the knot diagram in black and white, as a checkerboard, choose explicit edge matrices in regions of one color, say black, and then use the edge equations to write the edge matrices for white regions. For example, if an edge $e$ in a black region is labeled $\Px_i$, then in a white region $e$ is labeled $\Px_i \Mx^{\pm1}$.

Alternatively, one can label every peripheral arc, left and right, with a new matrix, but eliminate some of the new matrix elements, i.e. $u_i$. Indeed, up to conjugation, there are only  finitely many representations with $m=\pm 1$ on a canonical component. Therefore there are infinitely many representations so that $m-m^{-1} \neq 0$, which is a Zariski dense set. For these representations, using the commuting equations (Definition~\ref{def:PandC}), we can set  $u_i=(v_i-v_i^{-1})/(m-m^{-1})$.  Therefore, we can make this substitution for all representations on a canonical component.
\item Each region equation in Step 2c  yields at most three independent polynomial equations. (The   determinant 1 condition makes at least one of the four equations dependent on the others.)
\item In Definition \ref{definition:PxandCx}, we specified that we work with equivalence classes of matrices when we label a knot diagram, as in Step 1. One can use just matrices instead, as long as the region equations equal $(\pm)$ the identity.

Indeed, the matrices of type $\Px$ and $\Cx$ appear in products forming the $\Ax$ matrices, which conjugate $\Mx$ to form the Wirtinger generators.  As such, any choice of sign of $\Px$ or $\Cx$ does not affect the representation.  Choosing a sign for $\Px_R$ and $\Mx$ determines a sign for $\Px_L$ by the edge equation (Definition~\ref{eregiondgeequations1}).  Any choice of sign for  $\Px$ and $\Mx$ will not affect the commuting equation (Definition~\ref{def:PandC}).

\item We can substitute $\Cx$ by $\mathcal{W}$ in the algorithm as follows. Using Definition \ref{definition:PxandCx}, we can write:

\[ \Ax =  \pm \Px_1 \Cx_1 \dots \Px_k\Cx_k = \pm (c_1\dots c_k)   \Px_1 \mathcal{W}_1 \dots \Px_k \mathcal{W}_k, \] 
with indices corresponding to the order of matrices in $\Ax$. A region equation $\Ax= I$ is then equivalent to
\[  \Px_1 \mathcal{W}_1 \dots \Px_k \mathcal{W}_k = xI \] for some non-zero complex number $x$. Alternatively,  one can require the $(1,2)$ and $(2,1)$ entries to equal zero and the $(1,1)$ and $(2,2)$ entries to be equal.  This was used in \cite{MR3190595}.

\end{enumerate}
\end{remark}

\begin{remark}
We make a few choices  so that  representations are unique up to conjugation. The first is our choice of $\Mx$ as upper triangular with a 1 on the off diagonal as in Lemma~\ref{lemma:meridian}.  Second, we can choose an adjacent Wirtinger generator to be sent to a lower triangular matrix as in Proposition~\ref{PropositionConjugatingmeridianandwirt}. These choices are unique except for (the finitely many) representations with $\tr \rho (\mu)=0$. 
\end{remark}

\begin{thm}\label{ThmAlg}
Any  representation satisfying Algorithm~\ref{algorithm:main} is  a geometric representation. Conversely, all geometric representations satisfy the conditions of Algorithm~\ref{algorithm:main}. 
\end{thm}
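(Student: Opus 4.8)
The plan is to show that Theorem~\ref{ThmAlg} is essentially a restatement of Theorem~\ref{ThmMain1} together with Proposition~\ref{prop:backwards}, once one unwinds the bookkeeping of Algorithm~\ref{algorithm:main}. First I would observe that the data produced in Step 1 of the algorithm — a meridian matrix $\Mx$, edge matrices $\Px_{i,L},\Px_{i,R}$ for every side of every edge, and crossing matrices $\Cx_j$ — is exactly the data appearing in the hypothesis of the ``converse'' half of Theorem~\ref{ThmMain1}, and the equations imposed in Step 2 (commuting, edge, region) are exactly the equations required there. So the second sentence of Theorem~\ref{ThmAlg} (``all geometric representations satisfy the conditions of Algorithm~\ref{algorithm:main}'') is precisely the first (forward) half of Theorem~\ref{ThmMain1}: given a geometric $\rho$, Lemma~\ref{lemma:meridian} puts $\rho(\mu)$ in the form $\Mx$; Lemma~\ref{lemma:extendtoperipheral} and Lemma~\ref{lemma:Pmatrixunique} produce the $\Px_i$ satisfying the commuting equation; Lemma~\ref{LemmaCrossinggeodesics} produces the $\Cx_j$; and the geometric origin of the edge and region relations (the null-homotopy of $\alpha_S$ and the identity $\beta_{i,L}=\mu^{g_i}\beta_{i,R}$, combined with Lemma~\ref{lemma:conjuationbacktracking}) shows the edge and region equations hold. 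I would simply cite these lemmas in sequence and note that Step 3 of the algorithm merely records the matrices $\rho(w_i)=\Ax_i\Mx\Ax_i^{-1}$, which are well-defined by Lemma~\ref{independence}.

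For the first sentence (``any representation satisfying Algorithm~\ref{algorithm:main} is geometric''), the starting point is Proposition~\ref{prop:backwards}: matrices satisfying the commuting, edge, and region equations, together with the assignment $\rho(w_n)=\Ax_n\Mx\Ax_n^{-1}$, do define a homomorphism $\rho\colon\pi_1(S^3-K)\to\PSL_2(\C)$. So what remains is to argue that such a $\rho$ is in fact \emph{geometric}, not merely an abstract representation. Here I would invoke Corollary~\ref{corollary:geometric}: the solution set of the algorithm's equations is an algebraic set, every geometric representation lies on it (by the forward direction just discussed), and the geometric representations coming from Dehn fillings form a Zariski-dense subset of the canonical component, which is a curve. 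Hence every component of the solution set that meets the canonical component \emph{is} (a piece of) the canonical component, so its generic points are geometric. One must be slightly careful about the phrasing: the algorithm's variety may have extra components not containing discrete faithful characters, and points on those are not claimed to be geometric — so I would state the theorem's first sentence as applying to representations on the canonical component, matching the hedged language (``may determine'', ``contains all representations'') used elsewhere in the paper, or alternatively interpret ``representation satisfying Algorithm~\ref{algorithm:main}'' as ``representation arising as a limit of the Dehn-filling representations'', which are geometric by definition (Section~\ref{section:geometricreps}).

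The main obstacle I anticipate is purely expository rather than mathematical: making the correspondence between the loosely-specified ``Step 1 / Step 2 / Step 3'' language of the algorithm and the precise hypotheses of Theorem~\ref{ThmMain1} airtight — in particular checking that the region equations as written in Definition~\ref{eregiondgeequations2} (with the left/right convention for arcs interior to the region, and the $\Cx\Mx^{\pm1}\Cx$ substitution at non-alternating or meridianal fragments) really do capture \emph{all} the relations needed, and that no region or edge has been overlooked when $D$ has bigons or level edges. Once one grants that the algorithm's equations coincide with the equations of Theorem~\ref{ThmMain1}, the proof is two short paragraphs of citation. I would therefore keep the proof brief: forward direction from the first half of Theorem~\ref{ThmMain1}; converse direction from the second half of Theorem~\ref{ThmMain1} together with Corollary~\ref{corollary:geometric} to upgrade ``representation'' to ``geometric representation''.
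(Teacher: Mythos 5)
Your proposal matches the paper's proof almost exactly: the forward direction (``all geometric representations satisfy the conditions'') is established by citing the same chain of lemmas — extension of $\rho$ to peripheral and crossing arcs, uniqueness of the $\Px_i$ and $\Cx_j$ via Lemmas~\ref{lemma:Pmatrixunique} and~\ref{LemmaCrossinggeodesics}, and the commuting, edge, and region equations coming from the geometry of Section~\ref{simplerelations} — while the converse is reduced to Proposition~\ref{prop:backwards}. The one place you go beyond the paper is in observing that Proposition~\ref{prop:backwards} only produces a \emph{representation} rather than a \emph{geometric} one and proposing Corollary~\ref{corollary:geometric} (Zariski density of Dehn-filling representations on the canonical component) to bridge that gap; the paper's own proof dispatches this direction with the single sentence ``the converse statement follows from Proposition~\ref{prop:backwards},'' so your extra care there is warranted rather than redundant.
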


\begin{proof}

Assume that $\rho:\pi_1(M) \rightarrow \text{PSL}_2(\C)$ is geometric. We have shown that $\rho$ extends to crossing and peripheral arcs in Proposition~\ref{PropFormofpathalongtop}. We have also shown that $\rho$ has a unique $\text{PSL}_2(\C)$ representative. Indeed, for each peripheral arc $\beta$, $\rho(\tilde{\beta})$ is conjugate to an edge matrix $\Px$, and for each crossing arc $\gamma$, $\rho(\tilde{\gamma})$ is conjugate to a crossing matrix $\Cx$ given in Definition~\ref{definition:PxandCx}.   The matrices $\rho(\beta)$ and $\rho(\gamma)$  are uniquely defined for a specific lift of an arc by Lemma~\ref{lemma:extendtoperipheral} and Theorem~\ref{Thm:crossingarc}, respectively.  The edge matrices satisfy  the commuting equation by Lemma~\ref{lemma:Pmatrixunique}.  Then    the uniqueness of the element $\Px$ for a peripheral arc follows from Lemma~\ref{lemma:Pmatrixunique}. Lemma~\ref{LemmaCrossinggeodesics} shows that an element $\Cx$ is unique for a crossing arcs.  Therefore the matrices $\Px$ and $\Cx$ are uniquely determined, depending only on the orientation of the knot.  Proposition~\ref{prop:backwards} demonstrates that we can write a path on the top of the knot as a sequence of $\Px$ and $\Cx$ matrices.

Further we have shown that edge equations hold for each edge of the diagram (see Section~\ref{simplerelations}, Section 3 of \cite{MR3190595}, and equation \ref{EquationEdges1}). The region equations hold for each region of the diagram because the corresponding loops are null homotopic (see Section~\ref{simplerelations}).  The independence of $\rho(w_i)$ of the chosen path $\alpha$ follows from the region equations which ensure path independence. 

This proves that Algorithm \ref{algorithm:main} works.

The converse statement follows from Proposition~\ref{prop:backwards}.\end{proof}

\begin{remark}\label{remark:liftingtoSL}

 \textit{$\SL_2(\C)$ representations.} Algorithm \ref{algorithm:main} determines representations associated to the Wirtinger presentation of the knot group.  In this presentation, the generators of $\pi_1(M)$ are all meridians: that is, they are freely homotopic to $\mu$.
 For a preferred meridian $\mu$  and  $\rho:\pi_1(M)\rightarrow \PSL_2(\C)$, we have   $\rho(\mu)=\pm \Mx$ for  $\Mx\in \SL_2(\C)$. The two $\SL_2(\C)$ representations which are lifts of this $\PSL_2(\C)$ representation
  can be determined as follows.  The lifts are $\tilde{\rho}_1$ and $\tilde{\rho}_2$ by specifying $\tilde{\rho}_1(\mu)=\Mx$ and $\tilde{\rho}_2(\mu)=-\Mx$ (see Section~\ref{section:liftingreps}).  Moreover, if $w_i$ is a Wirtinger generator, and $\rho(w_i)= \pm \Ax_i^{-1}  \Mx \Ax_i$, we have $\tilde{\rho}_1(w_i) = \Ax_i^{-1}  \Mx \Ax_i$ and $\tilde{\rho}_2(w_i) = - \Ax_i^{-1}  \Mx \Ax_i$.

Let $\rho$ be a $\PSL_2(\C)$ representation without 2-torsion, so that $\rho$ lifts to an $\SL_2(\C)$ representation.  We can assign signs to the  $\Px$ and $\Cx$ type matrices so that they are in $\SL_2(\C)$. The signs of the $\Px$ and $\Cx$ matrices do not affect the lift of the $\PSL_2(\C)$ representation to $\SL_2(\C)$ as these matrices only appear in the $\Ax$ terms above and so any sign difference cancels out in a Wirtinger generator.  It is possible to assign signs in a way that all region equations (Definition~\ref{eregiondgeequations2}) equal the identity because an $\SL_2(\C)$ lift exists and the region equations represent loops.   Any choice of signs for $\Px$ matrices will allow us to use signed commuting equations, as mentioned in Remark~\ref{remark:commutingsign}.  The edge equations (Definition~\ref{eregiondgeequations1}), as they are based on the geometry of the associated transformations are satisfied with either a $-$ or a $+$. That is, we have either $\Px_{i,L}=\Mx^{g_i}\Px_{i,R}$ or  $\Px_{i,L}=- \Mx^{g_i}\Px_{i,R}$.

When solving equations in practice in both the $\SL_2(\C)$ and $\PSL_2(\C)$ case, it is often easier to assign signs to the $\Px$ and $\Cx$ matrices by using the commuting equations and the $+$ solution to the edge equations. Then one solves region equations as equaling $\pm I$, i.e. by ensuring the off-diagonal entries are zero and the diagonal entries are the same.  This will not affect the Wirtinger relations.  Instead, when realizing the longitude $\lambda$ as the concatenation of $\Px$ matrices, the corresponding matrix may be $-\rho(\lambda)$ due to the sign choice. The difference between the $\SL_2(\C)$ and $\PSL_2(\C)$ representations is then as follows: the meridians are cosets in the $\PSL_2(\C)$ case, and there are two lifts in the $\SL_2(\C)$ case for a meridian, where the sign of each lift of the $\Mx$ matrix governs the signs of the other matrices. 

\end{remark}

\begin{remark}\label{links}
A modification of our algorithm will work for links. One important difference is that for different components of a link, the peripheral matrices will not commute. As a result, representing peripheral elements for different link components might become rather cumbersome.
\end{remark}

\section{Further Shortcuts: Bigons and 3-sided Regions}\label{section:smallregions}

\subsection{Bigon Regions}  

 We now show that in the special case when a region $S$ has just two edges, the  edge and crossing matrices simplify considerably.  This observation is particularly helpful for diagrams with twist regions.

 \begin{lemma}\label{lemma:bigon}
Given a knot diagram $D$, let $S$ be its region with exactly two edges.
Then the edge matrices inside $S$ are both $\pm I$, and the crossing matrices for $S$ are identical.
\end{lemma}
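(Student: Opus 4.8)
## Proof plan for Lemma~\ref{lemma:bigon}

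\textbf{Setup.} Let $S$ be a bigon region of $D$ with two edges, say $E_a$ and $E_b$, meeting at two crossings, which we call the top and bottom crossings of the bigon. The two crossing arcs at these crossings are $\gamma_1$ and $\gamma_2$. The boundary of $S$ (as a path on top of the knot) is traversed by the loop $\alpha_S = \gamma_2 \beta_b \gamma_1 \beta_a$, where $\beta_a$ and $\beta_b$ are the peripheral arcs along $E_a, E_b$ that lie on the side facing $S$; this loop is null-homotopic, so by Remark~\ref{WirtingerInMatrices} the region equation for $S$ reads (up to sign and a cyclic/reflection choice)
\[
\Px_a \Cx_1 \Px_b \Cx_2 = I
\]
with $\Px_a = (\pm)\mat{v_a}{u_a}{0}{v_a^{-1}}$, $\Px_b = (\pm)\mat{v_b}{u_b}{0}{v_b^{-1}}$, $\Cx_j = (\pm)\mat{0}{c_j}{-c_j^{-1}}{0}$.

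\textbf{Main computation.} The key step is to unwind this single matrix equation. I would compute $\Cx_1 \Px_b \Cx_2$ first: since $\Cx_j$ is anti-diagonal, conjugating (more precisely, multiplying) the upper-triangular $\Px_b$ by anti-diagonal matrices on each side produces a \emph{lower}-triangular matrix, and a direct $2\times2$ multiplication gives
\[
\Cx_1 \Px_b \Cx_2 = (\pm)\mat{0}{c_1}{-c_1^{-1}}{0}\mat{v_b}{u_b}{0}{v_b^{-1}}\mat{0}{c_2}{-c_2^{-1}}{0} = (\pm)\mat{-c_1 c_2^{-1} v_b^{-1}}{0}{\;c_1^{-1} c_2^{-1} u_b}{-c_1^{-1} c_2 v_b}.
\]
Then $\Px_a$ times this lower-triangular matrix must equal $\pm I$. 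Writing out the product $\Px_a \cdot (\text{above}) = \pm I$ and reading off entries: the $(1,2)$ entry forces $u_a v_b = 0$ type relations, the off-diagonal $(2,1)$ entry forces $u_b = 0$ (using $v_b \neq 0$), and comparing the diagonal entries forces $v_a = v_b = 1$ (up to the global $\pm$) together with $c_1 = \pm c_2$. Thus $\Px_a = \Px_b = \pm I$ and $\Cx_1 = \pm \Cx_2$, which is exactly the claim. The one point requiring care is bookkeeping of the $\pm$ signs, since we work in $\PSL_2(\C)$: the conclusion ``$\Px = \pm I$'' means $\Px$ is the identity coset, and ``$\Cx_1, \Cx_2$ identical'' means identical as elements of $\PSL_2(\C)$, i.e. $c_1 = \pm c_2$ (equivalently $w_1 = w_2$ in the $\Wx$ notation).

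\textbf{Subtlety to watch.} I expect the main obstacle is not the algebra but \emph{correctly identifying which peripheral arcs appear and with which exponents}: by Definition~\ref{eregiondgeequations2} one must take $\Px_i = \Px_{i,L}$ or $\Px_{i,R}$ according to which side faces into $S$, and the exponents $e_i = \pm 1$ depend on whether the orientation of $K$ along $E_a, E_b$ agrees with the direction we traverse $\partial S$. Since the two edges of a bigon are typically oriented oppositely relative to a fixed traversal of $\partial S$, one of the two peripheral matrices enters as $\Px^{-1}$; but inverting an upper-triangular matrix of the $\Px$ form keeps it upper-triangular of the same form (with $v \mapsto v^{-1}$), so the computation above goes through unchanged after relabeling. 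I would remark explicitly that the conclusion is independent of these orientation choices, and that it is consistent with (indeed implies) the edge equations relating $\Px_{i,L}$ and $\Px_{i,R}$ inside the bigon. Once the matrix identity $\Px_a \Cx_1 \Px_b \Cx_2 = \pm I$ is on the table, the rest is the short direct calculation sketched above.
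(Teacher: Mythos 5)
There is a genuine gap at the final step. Your computation of the product is fine: from $\Px_a\Cx_1\Px_b\Cx_2=\pm I$ the off-diagonal entries give $u_a=u_b=0$, and the diagonal entries give
\[
-v_a c_1 c_2^{-1} v_b^{-1}=\pm 1 .
\]
But because the matrix has determinant one, the $(1,1)$ and $(2,2)$ conditions are equivalent, so this is a \emph{single} relation $v_a c_1=\mp v_b c_2$ among four unknowns. It does not force $v_a=v_b=\pm1$; for instance $v_a=2$, $v_b=3$, $c_1=1$, $c_2=-2/3$ satisfies it. So your claim that ``comparing the diagonal entries forces $v_a=v_b=1$ \ldots together with $c_1=\pm c_2$'' does not follow from the region equation alone, and the lemma cannot be proved from that one matrix identity.

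The missing ingredient is the commuting equation of Definition~\ref{def:PandC}, which every edge matrix is required to satisfy (it encodes that the peripheral arc and the meridian lie on the same banana): $(m-m^{-1})u_i=v_i-v_i^{-1}$. Once the region equation gives $u_a=u_b=0$, the commuting equation forces $v_i-v_i^{-1}=0$, i.e.\ $v_i=\pm1$, hence $\Px_a=\Px_b=\pm I$, and only then does the remaining relation $v_ac_1=\mp v_bc_2$ collapse to $c_1=\pm c_2$, i.e.\ $\Cx_1=\pm\Cx_2$. This is exactly how the paper argues (it reads the entries off the equivalent identity $\Px_1\Cx_1=\pm\Cx_2\Px_2^{-1}$ rather than multiplying all four matrices, but that difference is cosmetic). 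Your remarks about orientation conventions and which side of each edge faces $S$ are sensible and harmless; the substantive fix is to invoke the commuting equation.
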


\begin{proof}
The region equation for $S$ is $\Px_1 \Cx_1 \Px_2 \Cx_2=\pm I$, i.e. 
\begin{center}
$\Px_1\Cx_1= \pm(\Px_2\Cx_2)^{-1} = \pm \Cx_2 \Px_2^{-1}$.
\end{center}

\noindent With $\Px_i$ and $\Cx_i$ as in Definition \ref{definition:PxandCx}, this equation is
\[
\mat{-u_1 c_1^{-1}}{v_1c_1}{-v_1^{-1}c_1^{-1}}{0} = \pm \mat{0}{c_2v_2}{-c_2^{-1}v_2^{-1}}{c_2^{-1}u_2}.
\]

\noindent From matrix entries, we conclude that since $c_i\neq 0$ (as $\Cx_i$ is not trivial), we have  $u_1=u_2=0$ and $c_1v_1=\pm c_2 v_2$.
 The commuting equation (Definition~\ref{def:PandC})  specifies that
\[ (m-m^{-1})u_i=v_i-v_i^{-1}.\]
Since $u_i=0$, we conclude that $v_i=\pm 1$, and therefore $\Px_i=\pm I$.
It follows that $c_1=\pm c_2$ and so $\Cx_1= \pm  \Cx_2$.
\end{proof}

\subsection{3-sided Regions}  
For 3-sided region of a link diagram, we do not have predetermined edge and crossing matrices, but one can write simplified equations for matrix entries as follows.

\begin{lemma} \label{lemma:3gon}
Consider a 3-sided region with region equation $\Px_1 \Cx_1\Px_2\Cx_2\Px_3\Cx_3=(\pm) I$.  Then $v_i c_i^2 = u_iu_{i+1}v_{i+1}$, where $i=1, 2, 3,$ and subscripts are considered modulo 3. This implies that $c_1^2c_2^2c_3^2=  u_1^2u_2^2u_3^2$ as well.
\end{lemma}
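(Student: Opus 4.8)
The plan is to multiply out the three-factor product $\Px_1\Cx_1\Px_2\Cx_2\Px_3\Cx_3$ and extract the constraints imposed by requiring it to be $(\pm)I$. First I would observe that it is convenient to group the factors as $(\Px_i\Cx_i)$ for $i=1,2,3$, since by Definition~\ref{definition:PxandCx} each block is
\[
\Px_i\Cx_i = \mat{v_i}{u_i}{0}{v_i^{-1}}\mat{0}{c_i}{-c_i^{-1}}{0} = \mat{-u_ic_i^{-1}}{v_ic_i}{-v_i^{-1}c_i^{-1}}{0}.
\]
The key structural feature is that each block has a zero in the $(2,2)$ slot, which will make the bookkeeping tractable. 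I would then compute the product of the three blocks directly, or, equivalently, rewrite the region equation as $\Px_1\Cx_1\Px_2\Cx_2 = (\pm)(\Px_3\Cx_3)^{-1} = (\pm)\Cx_3\Px_3^{-1}$, matching entries as in the bigon proof (Lemma~\ref{lemma:bigon}) but now with one extra block on the left. Matching the $(2,2)$ entries of the two sides gives the cyclic relation $v_ic_i^2 = u_iu_{i+1}v_{i+1}$ for one value of $i$; the symmetry of the region equation under cyclic permutation of the starting edge (as noted after Definition~\ref{eregiondgeequations2}) then yields the same relation for all $i=1,2,3$ with subscripts mod $3$.

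The one point that needs a little care is the commuting equation and the elimination of the $u_i$. Using the commuting equation $(m-m^{-1})u_i = v_i - v_i^{-1}$ (Definition~\ref{def:PandC}), together with the determinant-one normalization, I can express things cleanly; alternatively I would simply carry $u_i$ along, since the statement is phrased in terms of $u_i$ and $v_i$ directly. To get the three relations simultaneously I would point out that a region equation can be written starting from any of the three edges and traversed in either direction, all algebraically equivalent, so deriving the $(2,2)$-entry identity once and cycling the indices suffices rather than expanding the full $2\times 2$ product three separate times.

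Finally, for the last assertion $c_1^2c_2^2c_3^2 = u_1^2u_2^2u_3^2$, I would multiply the three relations $v_ic_i^2 = u_iu_{i+1}v_{i+1}$ together:
\[
(v_1v_2v_3)(c_1^2c_2^2c_3^2) = (u_1u_2u_3)(u_2u_3u_1)(v_2v_3v_1) = u_1^2u_2^2u_3^2\,(v_1v_2v_3),
\]
and cancel the common factor $v_1v_2v_3$, which is nonzero since each $v_i\neq 0$ (the edge matrices being invertible). I do not expect any genuine obstacle here; the main thing to get right is the matrix multiplication and keeping the cyclic indexing consistent, and the proof should be only a few lines once the block decomposition $\Px_i\Cx_i$ is written down.
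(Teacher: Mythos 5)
Your plan is essentially the paper's proof: the paper simply multiplies out $\Px_1\Cx_1\Px_2\Cx_2\Px_3\Cx_3$, reads one relation $v_ic_i^2=u_iu_{i+1}v_{i+1}$ off the vanishing of an off-diagonal entry, and gets the other two from the cyclic reorderings of the region equation; your block decomposition $\Px_i\Cx_i$, the appeal to cyclic symmetry, and the final step of multiplying the three relations and cancelling $v_1v_2v_3\neq 0$ all match or correctly supply what the paper leaves implicit. There is, however, one concrete slip in the step as you wrote it. With
\[
\Px_1\Cx_1\Px_2\Cx_2=\mat{u_1u_2c_1^{-1}c_2^{-1}-v_1v_2^{-1}c_1c_2^{-1}}{-u_1v_2c_1^{-1}c_2}{u_2v_1^{-1}c_1^{-1}c_2^{-1}}{-v_1^{-1}v_2c_1^{-1}c_2}
\quad\text{and}\quad
(\pm)\,\Cx_3\Px_3^{-1}=(\pm)\mat{0}{c_3v_3}{-c_3^{-1}v_3^{-1}}{c_3^{-1}u_3},
\]
the relation $v_1c_1^2=u_1u_2v_2$ comes from the $(1,1)$ entries, i.e.\ from the entry that must \emph{vanish} (and is therefore insensitive to the $\pm$): $u_1u_2c_1^{-1}c_2^{-1}=v_1v_2^{-1}c_1c_2^{-1}$. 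Matching the $(2,2)$ entries, as you propose, instead yields $-v_1^{-1}v_2c_1^{-1}c_2=(\pm)c_3^{-1}u_3$, which is a true consequence of the region equation but is not any of the three stated relations and carries a sign ambiguity. This is a one-line fix (use the entry of the product that is forced to be zero — equivalently, the off-diagonal entry of the full triple product, which is what the paper does), not a gap in the method.
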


\begin{proof}
The lemma follows from multiplying out the matrices $\Px_1 \Cx_1\Px_2\Cx_2\Px_3\Cx_3$. The  $(2,1)$-entry of this product is 
\[
-(c_1^2v_1 - u_1u_2v_2)c_3v_3/(c_1c_2v_2).
\]

Since this entry must equal to zero, we conclude that $c_1^2v_1 = u_1u_2v_2$. The lemma then follows from considering the relation $\Px_2\Cx_2\Px_3\Cx_3\Px_1 \Cx_1=(\pm) I$ and its other cyclic orderings.

\end{proof}

\section{Cusp Shape}\label{CuspShape}

As a direct consequence of our algorithm, one can determine the cusp shape of a parabolic representation, and its analog for a non-parabolic one. For the longitude, let $\rho(\lambda)=\mathcal{L}$. A longitude $\lambda$ of an $n$-crossing knot consists of concatinated peripheral arcs: \[\beta_{1,R} \beta_{2, R} ... \beta_{2n, R} = \beta_{1,L} \beta_{2, L} ... \beta_{2n, L} =\lambda.\] The product of the corresponding edge matrices gives a formula for the cusp shape (or equivalently, for the length of the knot longitude, when meridian length is fixed to be 1): $\mathcal{L}=\prod_{i=1}^{2n} \Px_{i,R}=\prod_{i=1}^{2n}\Px_{i,L}$, where the $\Px$ matrices are as in Definition \ref{definition:PxandCx}.

Parabolic representations, including discrete and faithful representations, are those where  the $\Mx$ and  $\Px$ matrices are parabolic so that  $m=\pm 1$ and $v_i=\pm 1$.  Therefore, the cusp shape is given by the $(1,2)$-entry of $\mathcal{L}$. Hence for the discrete faithful representation, such an entry is a sum of $u_i$ for all  $\Px_{i,R}$ (or $\Px_{i,L}$).

\section{Example: figure-eight knot}  \label{figure-eight}

As an illustration of our method, we apply Algorithm \ref{algorithm:main} to the figure-eight knot step by step, using Lemma~\ref{lemma:bigon} to simplify computation. We obtain simple equations for the   geometric component of the representation variety.  The equation  govern both the $\SL_2(\C)$ representation variety and the $\PSL_2(\C)$ representation variety. The concrete difference between the two is whether we determine Wirtinger generators by specifying a signed matrix for $\Mx$ or a coset.  We also show how to quickly get  $A$-polynomial, parabolic representations, determine the cusp shape formula for different hyperbolic  structures, and  traceless representations. Equations for the character variety and representation variety for the figure-eight knot are well known. Our results derive this information directly from a diagram and not a presentation for the fundamental group or a triangulation.

\begin{figure}[!ht]
\centering
\includegraphics[scale=0.65]{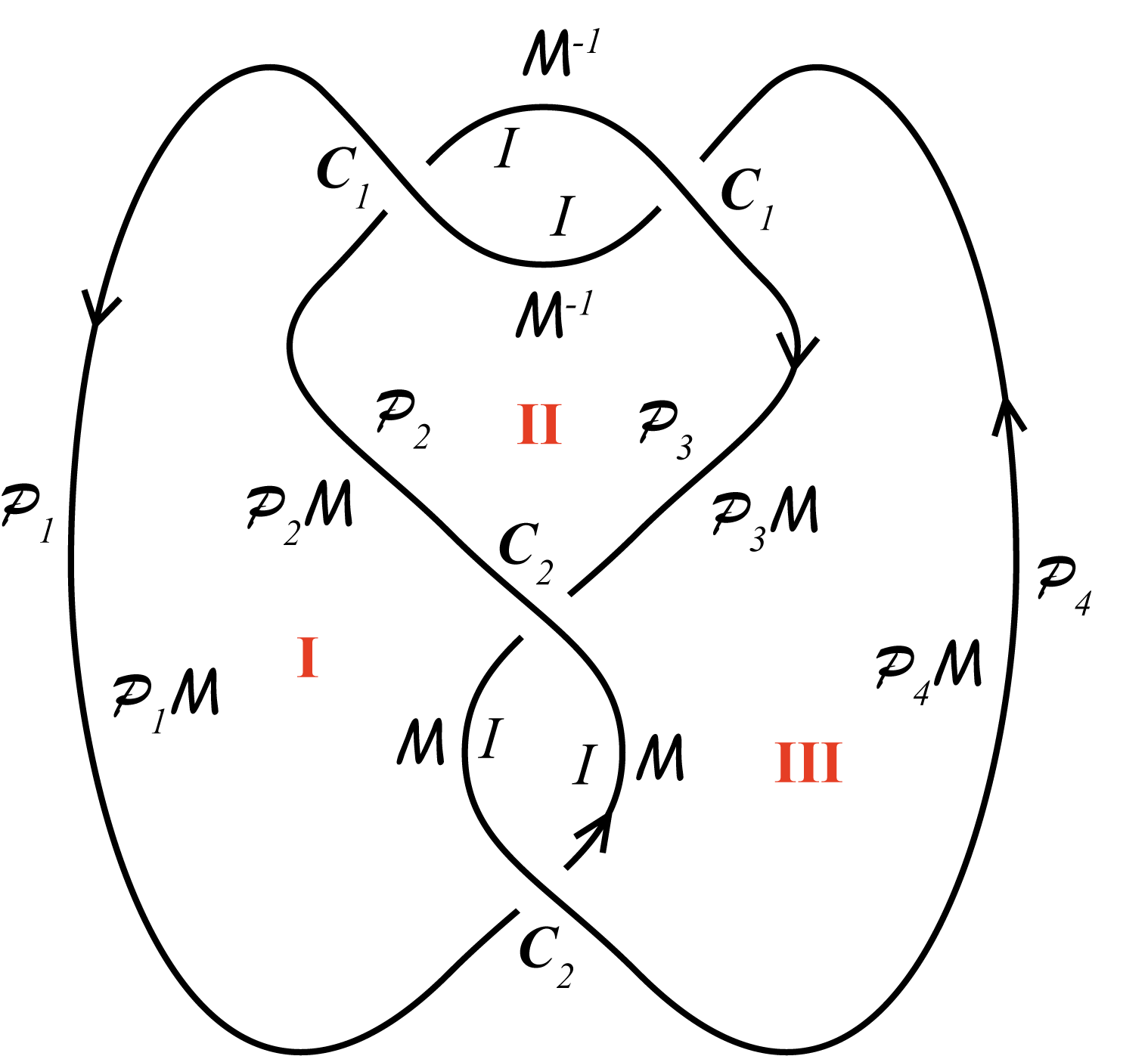}
\caption{Figure-8 knot diagram with edge and crossing matrices that label peripheral and crossing arcs. The regions are labelled by red Roman numerals.}
\label{fig8_labels}
\end{figure}

Choose an orientation for the knot as in Figure \ref{fig8_labels}. Using edge equations, the peripheral labels can all be written in terms of the matrices $\Mx, \Px_1, \Px_2, \Px_3$ and $\Px_4$ as in Figure \ref{fig8_labels}. Additionally, using Lemma~\ref{lemma:bigon} we can write all crossings as $\Cx_1$ or $\Cx_2$ as in Figure~\ref{fig8_labels} which matches the Figure-8 example in \cite{MR3190595}. As mentioned in Remark~\ref{remark:liftingtoSL}, we will choose signs for our $\Px$ and $\C$ matrices and set region equations equal to  $\pm I$ in $\SL_2(\C)$.

The region equations are as follows for regions I, II, III respectively:
\begin{equation}\tag{\text{Region I}}   \Mx^{-1} \Cx_2\Px_2\Mx\Cx_1 \Px_1\Mx\Cx_2 = \pm I \end{equation}  
\begin{equation}\tag{\text{Region II}}  \Px_3 \Cx_2 \Px_2 \Cx_1 \Mx \Cx_1 = \pm I  \end{equation}  
\begin{equation}\tag{\text{Region III}} \Mx^{-1} \Cx_2 \Px_4 \Mx \Cx_1 \Px_3 \Mx \Cx_2 = \pm I  \end{equation} 
For simplicity, we will use $\Wx_i = \mat{0}{-w_i}{1}{0}$ instead of $\Cx_i$ at the cost of a constant multiplier as in Definition \ref{definition:PxandCx}. Therefore $c_i^2=w_i$ for $i=1,2$.
These along with the four commuting equations 
\[
u_i(m^2-1)v_i= (v_i^2-1)m 
\]
for $i=1,2,3,4$ determine the representatives in terms of the parameters $m$, $u_i$, $v_i$, $w_1$, and $w_2$ for $i=1,2,3,4$.  
We will call the left hand side matrices of these equations $\Rx_I$, $\Rx_{II}$ and $\Rx_{III}$ respectively so that this algebraic set is determined by the commuting equations and the equations $\Rx[1,2]=\Rx[2,1]=0$ and $\Rx[1,1]=\Rx[2,2]$  for each of the $\Rx$ as above where $[a,b]$ indicates the appropriate matrix entries.  These equations hold even with the use of the $\Wx_i$ matrices.

From the region equations, using resultants, we obtain  $\Px_1=\Px_3$ and $\Px_2=\Px_4$. Specifically, 
\begin{equation}\tag{$*$} 
v_1=v_3, \  u_1=u_3, \ v_2=v_4, \ u_2=u_4,  \ w_1w_2=1.
 \end{equation}
With these reductions, one also immediately obtains the following linear relations:
\begin{equation}\tag{$**$} 
w_1 = u_1^2 u_2^2, \quad 
v_1 = m u_1 u_2^2, \quad 
v_2 = m u_2 w_1^{-1} = mu_1^{-2}u_2^{-1}.
 \end{equation}
Because equations  $(*)$ and $(**)$  are linear, we can remove the variables  $v_1$,  $v_2$,  $v_3$, $u_3$, $v_4$, $u_4$ $w_1$, and $w_2$  from our equations to obtain an isomorphic algebraic set defining our solutions. 
With these the variables $u_1, u_2$, and $m$ are related as follows:
\begin{equation}\tag{$*\!*\!*$}  m^4 + m^2u_1^2u_2^2 + u_1=0  \end{equation}
\begin{equation}\tag{$*\!*\!**$}  u_1^6 u_2^6 - u_1^6 u_2^4 - u_1^4u_2^6 + 2u_1^2u_2^2 - 1=0. \end{equation}
With $(*), (**), (*\!*\!*), (*\!*\!**)$, a given $u_2$ determines a finite number of $u_1$ and $m$ values, and all other values are completely determined by these parameters.

{\em Parabolic Representations}:     Let $m=1$. Then $v_1=v_2=v_3=v_4=1$ and $u_1=u_2=u_3=u_4=w_1$. For each $u_i$, we have $u_i^2+u_i+1=0$, so $u_i$ is a primitive third root of unity. Moreover, $w_2=w_1^{-1}=w_1^2$. This recovers Example 6.1 from \cite{MR3190595}, giving all parabolic representations that lie on the canonical component. 

{\em Cusp Shape}: The above relations for parabolic representations and  the formula in Section \ref{CuspShape} allow us to compute  the cusp shape. The cusp shape is the (1, 2)-entry of the matrix product $\Px_1\Mx\Px_2\Px_3\Mx\Px_4$. We can simplify this using the fact, mentioned above, that $\Px_3=\Px_1$ and $\Px_4=\Px_2$, together with the fact that all of these peripheral matrices commute. Hence we can write the longitude as 
\begin{equation}\tag{$\tilde{o}$}  \Lx =\pm (\Px_1 \Px_2 \Mx)^2.\end{equation} 
This gives the cusp shape of $4u_1+2$. For the complete hyperbolic structure, it is $2\sqrt{3}i$.

{\em Traceless Representations}:   
Traceless Representations of knot groups are representations where the meridian is sent to a matrix of trace zero. These representations often showcase connections to other invariants and related manifolds. (See, for example \cite{MR2983076}, \cite{MR3158776}, and \cite{MR3835325}.) 
For the figure-8 knot complement, when $\Mx$ is traceless (so $m^2+1=0$), we can compute them as follows. 

From $(**)$,  $1-u_1^2u_2^2+u_1^2=0$, and from $(*\!*\!*)$, $u_1^6 u_2^6 - u_1^6 u_2^4 - u_1^4u_2^6 + 2u_1^2u_2^2 - 1=0$.  Taking resultants, we see that $u_1^4 - u_1^2 - 1=0.$ 
We conclude that the following hold, where the $\pm$ are chosen consistently (the choice of all the upper signs or all the lower signs):
\[
u_1^2=u_2^2 = \tfrac12(1\pm \sqrt{5}), v_1^2= -u_1^6=-(2\pm \sqrt{5}), v_2^2 = -u_2^{-6} = 2\mp \sqrt{5}.
\]
We also have that 
\[
w_1= \tfrac12(3\pm \sqrt{5}), w_2 = w_1^{-1} = \tfrac12(3\mp \sqrt{5}).
\]

{\em $A$-polynomial}:
The $A$-polynomial was defined in \cite{MR1288467}, and there the $A$-polynomial for the figure-8 knot was computed as 
\[ L^2M^4+L(-M^8+M^6+2M^4+M^2-1)+M^4.\]
This is well-defined up to multiplication by a constant and powers of $L$ and $M$. 

From ($\tilde{o}$), the longitude is  $\Lx = (\Px_1 \Px_2 \Mx)^2$, and this is well-defined up to sign (as mentioned in Remark~\ref{remark:liftingtoSL}). 
We can set the $[1,1]$ entry of $\Lx$  to $L$ and use $M$ to denote the $[1,1]$ entry of $\Mx$ (so that $M=m$). In this way we will write the $A$-polynomial as a polynomial in the variables $L$ and $M$.   Upon choosing ``-" sign in ($\tilde{o}$): $\Lx = -(\Px_1 \Px_2 \Mx)^2$, and taking resultants to eliminate all variables other than $L$ and $M$ in this equation, as well as in $(*\!*\!*)$, and $(*\!*\!**)$, we get the same $A$-polynomial as above.

{\em Character Variety}: We can also compute the $\SL_2(\C)$ character variety upon letting 
$x=m+m^{-1}$ and letting $z= \tr (\Cx_2 \Mx \Cx_2^{-1}) = 2-c_1^{-2}=2-w_2^{-1}=2-w_1$. We have $w_1=u_2^2u_2^2$  from $(**)$, so that $z=2-u_2^2u_2^2$ with the two defining equations ($*\!*\!*$) and ($*\!*\!**$) above. Taking resultants with $m^2-mx+1=0$, we have: 
\[
x^2z-2x^2-z^2+z+1 = 0.
\]
This is a defining equation for the (non-abelian) portion of the character variety for the figure-8 knot complement. It can be rewritten as 
\[
x^2(z-2) = z^2-z-1.
\]
 Upon a change of variables $y=x(z-2)$, this is birational to
 \[
 x^2 = (z^2-z-1)(z-2) = z^3-3z^2+z+2.
 \]
Note that substituting $y-2= r-1$ we recover $x^2(r-1)= r^2+r-1$, which is the form that one gets from the standard two bridge presentation $\langle a,b | aw=wb\rangle$ with $w=ba^{-1}b^{-1}a$ taking $x=\chi_{\rho}(a)=\chi_{\rho}(b)$ and $r=\chi_{\rho}(ab^{-1})$.

\section{3-braids}\label{3-braids}

\begin{example} Knots from the infinite family of closed alternating braids with the braid word
$(\sigma_1(\sigma_2)^{-1})^n$, $n>2$.

Note that when $n=3k$, where $k$ is natural number, this is a 3-component link (for example, Borromean link for $n=3$), and otherwise it is a knot (for example, for $n=4$, Turk's Head Knot). The procedure below is for a knot complement in $S^3$, i.e. for the case where $n\neq 3k$, for any natural $k$.

According to Step 1 of Algorithm~\ref{algorithm:main}, we first assign meridian matrix to a meridian, and edge and crossing matrices to the reduced alternating diagram as in Figure \ref{braid}.  Here  $\Px_i, i=1, 2, \dots, 8,$ are edge matrices, and $\Cx_1, \Cx_2$ are crossing matrices  as in  Definition \ref{definition:PxandCx}. 

 \begin{figure}[!ht]
  \includegraphics[scale=0.4]{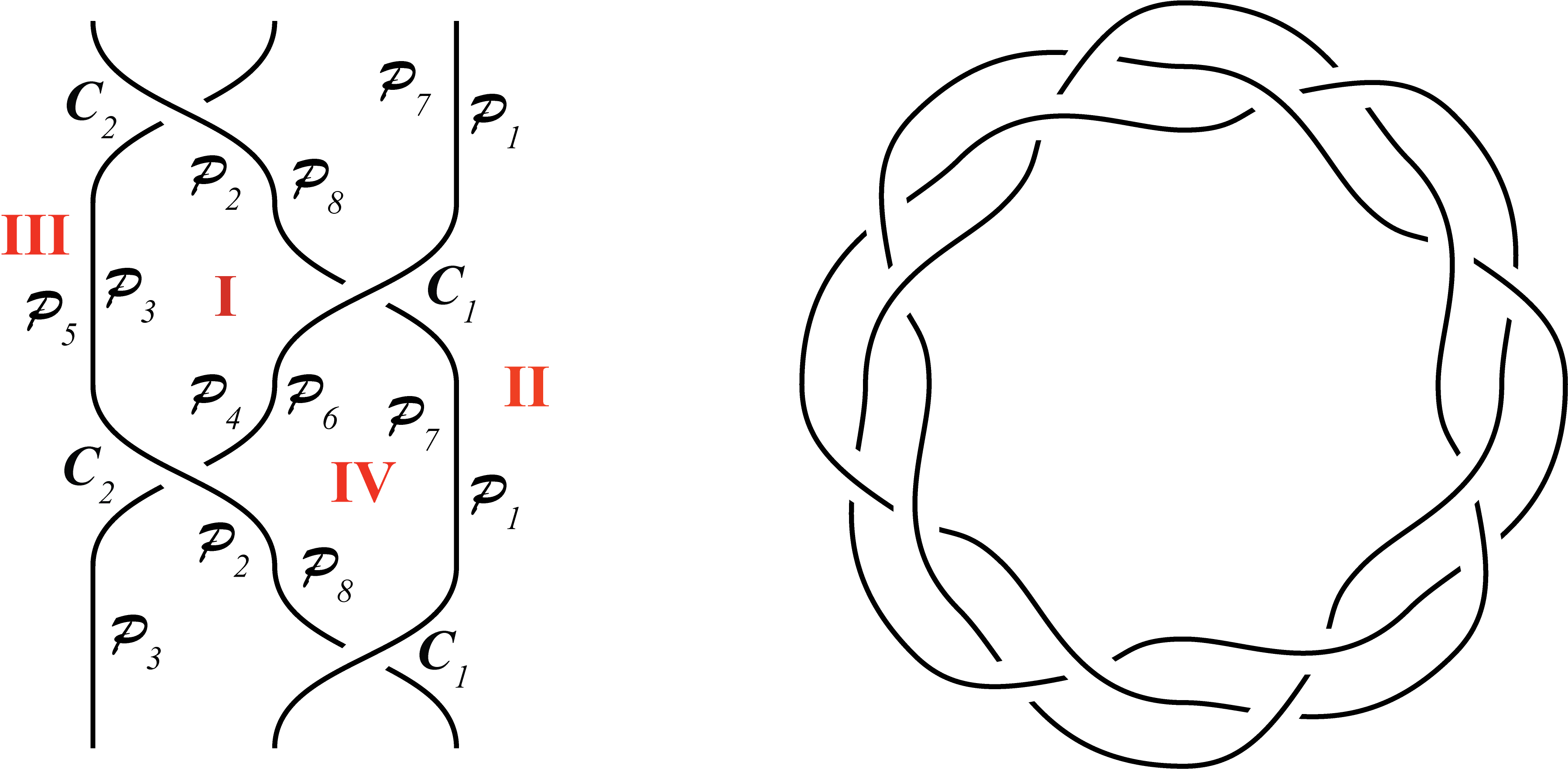}  
  \caption{On the right, a closed alternating braid from the family $(\sigma_1(\sigma_2)^{-1})^n$, $n>2$. On the left, a fragment of such diagram labelled by edge and crossing matrices (in black) and with numbered regions (with red Roman numerals).}
\label{braid}

  \end{figure}

We will use a symmetric labeling of the knot which corresponds to the central  rotational symmetry of the knot.  It is known  \cite{MR387430, MR2796637}   by looking at orbifold quotients that infinitely many representations, including discrete faithful representations and infinitely many representations corresponding to Dehn fillings satisfy this symmetry.  These representations can  be characterized as those which factor through the fundamental group of the orbifold obtained by taking the quotient by the action of the symmetry.

As a result of this simplification,  all of  the crossing matrices on one side of the braid are $\Cx_1$, on  the other side are $\Cx_2$. One can see a similar repeating pattern for edge matrices in Figure \ref{braid}.  The matrices are $\Px_i=\left(\begin{array}{ccc}
v_i &  u_i  \\
0 & v_i^{-1} \end{array} \right)$ as in Definition~\ref{definition:PxandCx}, $\Cx_i=\left(\begin{array}{ccc}
0 & c_i  \\
-c_i^{-1} & 0 \end{array} \right)$, and the meridian $\Mx=\left(\begin{array}{ccc}
m & 1  \\
0 & m^{-1} \end{array} \right)$.

 We assume that all of the labels $v_i, i=1,2,3,4,$ and $m$ are not 0. With this and the numbering of the diagram regions as on the figure (in red Roman numerals), we have the following, where the braid is oriented upwards.

\begin{equation}\tag{\text{Region I}} \Px_3\Cx_2\Px_2^{-1} \Cx_1\Px_4^{-1} \Cx_2=\pm I\end{equation}
\begin{equation}\tag{\text{Region II}}  (\Px_1\Cx_1)^n=\pm I \end{equation}
\begin{equation}\tag{\text{Region III}} (\Px_5\Cx_2)^n=\pm I   \end{equation}
\begin{equation}\tag{\text{Region IV}}  \Px_6\Cx_1\Px_7^{-1}\Cx_1\Px_8\Cx_2 =\pm I  \end{equation}

\noindent Here $I$ is the identity matrix. 

We will use Lemma~\ref{lemma:3gon}  instead of the  Region I and IV equations.

\subsection{Regions II and III}

Any equation of the form $(\Px_i\Cx_j)^n=\pm I$ is 
\[
\mat{-u_ic_j^{-1}}{v_ic_j}{-v_i^{-1}c_j^{-1}}{0}^n=\pm I.
\]
We conclude that up to sign the trace, $(\pm) u_ic_j^{-1}$ equals $\zeta + \zeta^{-1}$ for $\zeta$ some $n$-th root of unity and so 
\begin{equation}\tag{$\dagger$} -u_ic_j^{-1} = 2 \cos (\pi k/n). \end{equation}
We have used the crossing matrices in the form of $\Cx_j$ here to underscore this arithmetic, since for $\Cx_j$, it is straightforward to write the condition that the $n$th power of a determinant one matrix is $\pm I$ in terms of the trace.
 Below, we will use $w_j=c_j^2$ instead, where $w_j$ corresponds to the  crossing matrix $\Wx_j$ (Definition \ref{definition:PxandCx}), to make the calculations more streamlined.

We can also express the condition that the $n$th power of the matrix is $\pm I$ recursively.  By the Cayley-Hamilton theorem, for $M\in \SL_2(\C)$ we can write $M^n=f_n(\tr M) M-f_{n-1}(\tr M) I$, where $f_{\ell}$ is defined recursively for both positive and negative $\ell$ by $f_0(x)=0$, $f_1(x)=1$ and $f_{\ell+1}(x) +f_{\ell-1}(x) = x f_{\ell}(x)$. 
Therefore, with $M=\Px_i\Cx_j$,
\[
M^n = f_n(-u_ic_j^{-1}) M -  f_{n-1}(-u_ic_j^{-1}) I, 
\] 
and the traces of the left and right sides of the above equation are related by
\[
 \pm ( m^n+m^{-n})= -u_ic_j^{-1}f_n(-u_ic_j^{-1}) -2  f_{n-1}(-u_ic_j^{-1}). 
\]

With the commuting equations, $u_i  (m-m^{-1})=(v_i-v_i^{-1})$, we can write $(\dagger)$ as follows after squaring both sides of the equality:
\[
4 w_j (m-m^{-1})^2  \cos^2 (\pi k/n) = (v_i-v_i^{-1})^2 
\]
for some $k\in \Z$. 
Region II and III equations are $(\Px_1\Cx_1)^n\pm I$ and $(\Px_5\Cx_2)=\pm I$ so that
\begin{equation}\tag{$\dagger \dagger$}
\begin{aligned}
4w_1(m-m^{-1})^2 \cos^2 (\pi k_1/n)& =  (v_1-v_1^{-1})^2,  \\   4 w_2(m-m^{-1})^2 \cos^2 (\pi k_2/n) & = (v_5-v_5^{-1})^2.
\end{aligned}
\end{equation}
Note that if the representation is parabolic, then ($\dagger \dagger$) is the trivial equation since then certain matrix entries must be 1: $m=v_1=v_5=\pm 1$.   As a result, we will compute the parabolic representations separately. In the parabolic case, the identity ($\dagger$) gives us 
\begin{equation}\tag{$\dagger \dagger \dagger$}
 u_1^2 = 4 w_1  \cos^2 (\pi k_1/n), \quad  u_5^2 = 4 w_2  \cos^2 (\pi k_2/n).
\end{equation}
 
\subsection{Regions I and IV}

Using Lemma~\ref{lemma:3gon} we obtain the following equations from Region I
\[
w_2 = -u_2v_2^{-1}u_3v_3^{-1}, \ w_1  = u_2v_2u_4 v_4^{-1}, \  w_2   = - u_4v_4u_3v_3
\]
and for Region IV we have
\[
w_1   = -u_6v_6^{-1} u_7v_7^{-1},  \ w_1   =  -u_7v_7u_8v_8,  \ w_2   =  u_8v_8^{-1}u_6v_6.
\]

Assume that the representation is not parabolic.
The commuting equations imply that $u_iv_i = (v_i^2-1)(m-m^{-1})^{-1}$ and  $u_iv_i^{-1} = (1-v_i^{-2})(m-m^{-1})^{-1}$ and upon replacing these above in the equations for $w_1$ and $w_2$ and collecting the expressions in terms of $w_1$ or $w_2$ we have 
\begin{align*}
w_1 (m-m^{-1})^2 & = (v_2^2-1)(1-v_4^{-2}) = -(1-v_6^{-2})(1-v_7^{-2}) = - (v_7^2-1)(v_8^2-1) \\
w_2 (m-m^{-1})^2 & =  -(1-v_2^{-2}) (1-v_3^{-2}) = -(v_4^2-1)(v_3^2-1)= (1-v_8^{-2})(v_6^2-1).
\end{align*}

Here are edge equations for our knot:
\[
\Px_6= \pm \Mx \Px_4, \ \Px_7 = \pm  \Mx \Px_1, \ \Px_8 = \pm  \Mx \Px_2, \ \Px_5 = \pm  \Mx \Px_3.
\]
They imply the following relations for the $m$ and $v_i$ variables:
\[
v_ 6  = \pm mv_4,
v_ 7  = \pm mv_1,
v_ 8  = \pm mv_2,
v_ 5  = \pm mv_3.
\]
With these we replace $v_3, v_8, v_6$, and $v_7$ in the $w_1 (m-m^{-1})^2$ and $w_2 (m-m^{-1})^2$ equations above and get the following.

\begin{equation}\tag{$*$}
\begin{aligned}
w_1 (m-m^{-1})^2 &  = (v_2^2-1)(1-v_4^{-2}) = -(1-m^{-2}v_4^{-2})(1-m^{-2}v_1^{-2})  \\
&  = - (m^2v_1^2-1)(m^2v_2^2-1) 
\end{aligned}
\end{equation}
\begin{equation}\tag{$**$}
\begin{aligned}
w_2 (m-m^{-1})^2  & =  -(1-v_2^{-2}) (1-m^2v_5^{-2}) = -(v_4^2-1)(m^{-2}v_5^2-1) \\
& = (1-m^{-2}v_2^{-2})(m^2v_4^2-1).
 \end{aligned}
\end{equation}

Equation $(**)$  implies that $-(1-v_2^{-2}) (1-m^2v_5^{-2}) +(v_4^2-1)(m^{-2}v_5^2-1)=0$, which reduces to 
\[ v_4^2 -1 = m^2v_5^{-2}(1-v_2^{-2}),\]
 if $v_5^2\neq m^2$.

In the equation directly above, use the right side to express $v_4$ in terms of the other variables. Now there are four equal expressions in $(*)$. Take the difference of the second and fourth expressions and replace $v_4$ with the expression for $v_4$ that we obtained.  Call the numerator $A$ so that $A=0$. Similarly, there are  four equal expressions in $(**)$.   Take the difference of the second and fourth expressions and replace $v_4$ too.  Call the numerator $B$ so that $A=B=0$. Hence $v_2^2B+A=0$, and we obtain:
 \[
v_2^2 = \frac{ m^4v_1^2 - m^2v_1^2v_5^2  - m^4  + v_1^2v_5^2  + m^2  - v_5^2 }{ m^2(m^2v_1^2 - m^2 - v_5^2 + 1)}.
 \]
It follows that
\begin{align*}
m^6v_1^4v_5^2 & - m^4v_1^4v_5^4 - m^6v_1^4 - m^6v_1^2v_5^2 + m^2v_1^4v_5^4 + m^6v_1^2 + 2m^4v_1^2v_5^2  & \\
& - 2m^2v_1^2v_5^2  - v_1^2v_5^4 - m^4 + v_1^2v_5^2 + v_5^4 + m^2 - v_5^2=0. 
\end{align*}

\subsection{Defining Equations}

The Region II and III equations with the substitutions from $(*)$ and $(**)$ give us that for any integers $k_1, k_2$,
\begin{align*}
- 4(m^2v_1^2-1)(m^2v_2^2-1)  \cos^2 (\pi k_1/n)& =  (v_1-v_1^{-1})^2  \\   
 -4(v_4^2-1)(m^{-2}v_5^2-1) \cos^2 (\pi k_2/n) & = (v_5-v_5^{-1})^2.
\end{align*}

From the previous section using the $v_4^2-1$ and $v_2^2$ equations,  we can write 
\[
v_4^2-1 = - \frac{m^2(m^2 - 1)(v_1^2 - 1)}{ m^4v_1^2  - m^2 v_1^2v_5^2  - m^4 + v_1^2v_5^2  + m^2 - v_5^2}
\]
and 
\[
m^2v_2^2-1= \frac{(m^2v_1^2- v_1^2v_5^2 - m^2 + 1)(m^2- 1)}{ m^2v_1^2  - m^2  - v_5^2 + 1}
\]
These equations determine the non-parabolic representations.

\subsection{Parabolic Representations}

Recall that ($\dagger \dagger \dagger$) are 
\[ u_1^2= 4 w_1 \cos^2 ( \pi k_1/n), \quad u_5^2  = 4 w_2  \cos^2 ( \pi k_2/n).\] 
Let $q_i =  2\cos ( \pi k_i/n)$. 
The commuting equations imply that all $v_i=\pm 1$.

Lemma~\ref{lemma:3gon} gives the following equations from Region I and IV
\[
w_1  = u_2 u_4   = -u_6  u_7  =  -u_7u_8,  \  w_2 = -u_2 u_3  = - u_4 u_3  =  u_8 u_6. 
\]
Therefore $u_6=u_8$ and $u_2=u_4$. The edge equations imply that 
\[
 u_7=u_1+1, u_8=u_2+1, u_5=u_3+1.
\]
We conclude that  parabolic representations can be described by $u_1, u_2, u_3$, $w_1$, and $w_2$.
We can rewrite the equations above as
\[
q_1^2w_1=q_1^2u_2^2=-q_1^2(u_1+1)(u_2+1)=u_1^2, \quad q_2^2w_2=q_2^2(u_2+1)^2=-q_2^2u_2u_3=(u_3+1)^2.
\]
Therefore $u_1 = \pm q_1 u_2$ and $u_3+1 = \pm  q_2 (u_2+1)$. And the equations $-(u_1+1)(u_2+1)=u_2^2,-u_2u_3=(u_2+1)^2$ reduce to 
\[(1 \pm q_1) u_2( u_2 +1)+1 =0, \quad (1 \pm q_2)u_2( u_2 +1)+1 =0. \]
Hence $q_1=\pm q_2$.  With $w_1=u_2^2$, $w_2=(u_2+1)^2$, and $u_3=-\tfrac1{u_2}(u_2+1)^2$, we have
\[
(q_1+1)u_2^2+(q_1+1)u_2+1=0.
\]
This recovers Example 6.2 from \cite{MR3190595}.

\end{example}

\bibliographystyle{amsplain}
\bibliography{knots}

\end{document}